\newtheorem{thm}{Theorem}[section]
\newtheorem{cor}[thm]{Corollary}
\newtheorem{lemma}[thm]{Lemma}
\newtheorem{prop}[thm]{Proposition}
\theoremstyle{definition}
\newtheorem{defn}[thm]{Definition}
\theoremstyle{remark}
\newtheorem{remark}[thm]{Remark}
\numberwithin{equation}{section}
\begin{document}

%\linenumbers

\title[Existence and global second-order regularity for anisotropic parabolic equations]{Existence and global second-order regularity for anisotropic parabolic equations with variable growth}

\author[R. Arora]{Rakesh Arora}

\author[S. Shmarev]{Sergey Shmarev}
\address[R. Arora]{Department of Mathematical Sciences, Indian Institute of Technology (BHU), Varanasi 221005, India.}
\email{arora.npde@gmail.com}
\address[S. Shmarev]{Department of Mathematics, University of Oviedo, c/Federico Garc\'{i}a Lorca 18, 33007, Oviedo, Spain.}
\email{shmarev@uniovi.es}

\begin{abstract}
We consider the homogeneous Dirichlet problem for the anisotropic parabolic equation
\[
u_t-\sum_{i=1}^ND_{x_i}\left(|D_{x_i}u|^{p_i(x,t)-2}D_{x_i}u\right)=f(x,t)
\]
in the cylinder $\Omega\times (0,T)$, where $\Omega\subset \mathbb{R}^N$, $N\geq 2$, is a parallelepiped. The exponents of nonlinearity $p_i$ are given Lipschitz-continuous functions. It is shown that if $p_i(x,t)>\frac{2N}{N+2}$,
\[
\mu=\sup_{Q_T}\dfrac{\max_i p_i(x,t)}{\min_i p_i(x,t)}<1+\dfrac{1}{N}, \quad |D_{x_i}u_0|^{\max\{p_i(\cdot,0),2\}}\in L^1(\Omega),\quad f\in L^2(0,T;W^{1,2}_0(\Omega)),
\]
then the problem has a unique solution $u\in C([0,T];L^2(\Omega))$ with $|D_{x_i} u|^{p_i}\in L^{\infty}(0,T;L^1(\Omega))$, $u_t\in L^2(Q_T)$. Moreover,
\[
|D_{x_i}u|^{p_i+r}\in L^1(Q_T)\quad \text{with some $r=r(\mu,N)>0$},\qquad |D_{x_i}u|^{\frac{p_i-2}{2}}D_{x_i}u\in W^{1,2}(Q_T).
\]
The assertions remain true for a smooth domain $\Omega$ if $p_i=2$ on the lateral boundary of $Q_T$.

\medskip
	
\noindent\textit{2010 Mathematics Subject Classification: 35K65, 35K67, 35B65,  35K55, 35K99.}

\noindent\textit{Key words: nonlinear parabolic equations; anisotropic nonlinearity; global higher integrability; second-order regularity.}

\medskip
\end{abstract}

\thanks{The second author acknowledges the support of the Research Grant MCI-21-PID2020-116287GB-I00, Spain}

\today

\maketitle

%\tableofcontents

\section{Introduction}
We consider the Dirichlet problem for the model anisotropic parabolic equation
\begin{equation}
\label{eq:main}
\begin{split}
& u_t-\sum_{j=1}^{N}D_j\left(|D_ju|^{p_j(z)-2}D_ju\right)=f(z)\quad \text{in $Q_T$},
\\
& \text{$u=0$ on $\partial\Omega\times (0,T)$},
\qquad
\text{$u(x,0)=u_0(x)$ in $\Omega$}.
\end{split}
\end{equation}
Throughout the text we denote by $z=(x,t)$ the points of the cylinder $Q_T=\Omega\times (0,T)$ with the base $\Omega$. The domain $\Omega\subset \mathbb{R}^N$, $N\geq 2$, is either a parallelepiped
\[
K_{\vec a}=\left\{x\in \mathbb{R}^N:\,x_i\in (-a_i,a_i),\,i=\overline{1,N}\right\}
\]
with the faces parallel to the coordinate planes and the edge lengths $2a_i$, or a domain with the smooth boundary $\partial\Omega\in C^k$, where $k\geq 2$ is a sufficiently large natural number. %Most of attention is paid to the first situation.
The assumptions on the exponents $p_i(z)$ differ according to the choice of the domain. The exponents of nonlinearity $p_i$ and the right-hand side $f$ are given functions whose properties will be specified later. The notation $D_j u$ is used for the partial derivative with respect to $x_j$, $D^2_{ij}u=D_{i}\left(D_{j}u\right)$, $i,j=\overline{1,N}$.

Equation \eqref{eq:main} with $p_i=p$ is sometimes called the equation of orthotropic diffusion \cite{Brasco-2021,Vazquez-2021-Bar-Arxiv}. It appears in the mathematical modeling of the diffusion processes where the diffusion rates are proportional to $|D_ju|^{p-2}$. In the present work, we are interested in the anisotropic case where the diffusion rates differ according to the directions $x_j$. The exponents $p_j>2$ correspond to the directions of slow diffusion, while $p_j<2$ means that the diffusion is fast. Since we allow $p_j$ to be functions of the variables $(x,t)$, it is possible that the character of diffusion in the $j$th direction changes from point to point.

At the points where $D_ju=0$ for some $j\in \{1,2,\ldots,N\}$, equation \eqref{eq:main} degenerates if $p_j>2$ or becomes singular if $p_j<2$. Despite the resemblance with the celebrated evolution $p$-Laplace equation

\begin{equation}
  \label{eq:p-Laplace}
u_t=\operatorname{div}\left(|\nabla u|^{p-2}\nabla u\right)=f,\qquad 1<p<\infty,
\end{equation}
which degenerates or becomes singular at the points where $|\nabla u|=0$, the properties of the solutions to equation \eqref{eq:main} are in striking contrast with the properties of the classical $p$-laplacian \eqref{eq:p-Laplace}. Unlike equation \eqref{eq:p-Laplace}, it may happen that the solutions of problem \eqref{eq:main} vanish in a finite time if the equation includes only one direction of fast diffusion with $p_i<2$. Conversely, the speed of propagation of disturbances may be finite or even zero in the direction of slow diffusion, see \cite{Vespri-J-Evol-Eq-2019,ant-shm-book-2015}.The difficulties brought in by the anisotropy and the nonhomogeneity of the diffusion operator are illustrated by the analysis of the self-similar solutions of Barenblatt type \cite{Vespri-2021-Bar,Vazquez-2021-Bar-Arxiv}. Unlike the isotropic case where the typical geometry is defined in terms of balls in $\mathbb{R}^N$, in the anisotropic case  it is defined by parallelepipeds with the edge lengths related to the exponents $p_i$.

In recent years, parabolic equations with anisotropic nonlinearity have been studied very actively. The theory of such equations, with constant or variable exponents $p_i$, includes theorems of existence and uniqueness of weak solutions,  properties of propagation of solutions in space and time, as well as certain regularity properties of solutions.
For the existence results for equation \eqref{eq:main}, as well as more general equations with the anisotropic principal part, nonlinear terms of lower order with variable nonlinearity, and under different regularity assumptions on the data, see, e.g., \cite{Ant-Shm-2009,Starovoitov-Tersenov-2010,ant-shm-book-2015,Qian_Yuan-2020,Bend-Kars-Saad-2013,Muk-2018,Boureanu-Velez-Santiago-2019,Mokhtari-2022,Chrif-2020, Chrif-2021,Ters-Ters-2019} and further references therein. The methods of proof in these works vary in dependence on the assumptions about the data and the exponents of nonlinearity.

The regularity of local solutions to equation \eqref{eq:main} has been studied by several authors. We refer, e.g., to \cite{Bahja-2021,Bahja-2019} for results on local continuity of solutions to equation \eqref{eq:main}. In \cite{Vespri-2019-Harnack,Vespri-2019-Holder,Vespri-2021-Harnack-Holder,Vespri-2021-Arxiv} the Harnack inequality and H\"older continuity of local solutions is established. It was recently proven in \cite{Brasco-2021} that the spatial gradient of the local solution of equation \eqref{eq:main} is bounded. The above results refer to the local solutions of equation \eqref{eq:main} and do not depend on the geometry of the boundary of the space domain $\Omega$. One of the key tools of the proofs are inequalities of the Caccippoli type, which prevents one from a straightforward extension of the regularity results to the whole of the cylinder $Q_T$.

In the present work, we are interested in the global regularity properties of solutions to problem \eqref{eq:main}. This issue has been recently studied  in several works. It is shown in \cite{Ters-Ters-2017} that problem \eqref{eq:main} with constant $p_i>1$ admits Lipschitz-continuous solutions. This is true if the domain $\Omega$ is either a parallelepiped, or is convex, $C^2$-smooth, and satisfies some geometric restrictions. The results of \cite{Ters-Ters-2017} are obtained under the following assumptions:
\[
f\equiv 0,\quad u_0\in C^2(\Omega),\quad \max_{\overline{\Omega}}\sum_{i=1}^N |D_i(|D_iu_0|^{p_i-2}D_iu_0)|<\infty.
\]
Moreover, it is shown that $D^2_{ij}u\in L^{2}(Q_T)$, provided that $p_i<2$. The key ingredient of the proof is the technique of ``doubling the space variables", which allows one to estimate the Lipschitz constant of the solution without differentiating the equation. Problem \eqref{eq:main} with the variable exponents $p_i \in C^{\alpha}[0,T]$, $\alpha\in (0,1)$, and the nonlinear source $f=f(x,t,u,\nabla u)$ is considered in \cite{Ters-Ters-2020}. Under the same assumptions on the geometry and smoothness of the space domain $\Omega$ as in \cite{Ters-Ters-2017}, the authors prove that if $f(x,t,u,\nabla u)$ satisfies certain growth conditions with respect to $u$ and  $\nabla u$, $u_0$ is Lipschitz-continuous, and for every $i=\overline{1,N}$ either $p_i(t)\geq 2$, or $p_i(t)\in (1,2]$ for all $t\in [0,T]$,  then the problem has a unique solution which is Lipschitz-continuous in the space variables. Moreover, if $p_i(t)\geq 2$ for all $i=\overline{1,N}$, then $u(x,\cdot,)\in C^{\frac{1}{2}}[0,T]$.

 Apart from these works, we are unaware of results on the global regularity for parabolic problems driven by the anisotropic operators, whilst the results for equations with the nonlinearity depending upon both space and time variables seem to be completely missing.  Addressing this issue, we establish global higher integrability and second-order regularity of solutions to the anisotropic parabolic equation \eqref{eq:main}. For problems of this type, an extension of previous contributions is particularly delicate due to varying anisotropy in both space and time variables.
 Let us describe the results of the present work. First we consider problem \eqref{eq:main} in a rectangular domain $\Omega$ with the data satisfying the conditions

\[
f\in L^2(0,T;W^{1,2}_0(\Omega)),\qquad \sum_{i=1}^N\int_\Omega |D_iu_0|^{\max\{2,p_i(x,0)\}}\,dx<\infty.
\]
The exponents $p_i$ are Lipschitz continuous functions and satisfy conditions \eqref{eq:p-i-1}, \eqref{eq:p-i-2}. The latter condition means that at every point $z\in Q_T$ the maximum and minimum of $p_i(z)$ are sufficiently close. The following is a brief account of the results in the case when $\Omega$ is a parallelepiped:

\begin{itemize}
\item problem \eqref{eq:main} has a unique solution $u\in C([0,T];L^2(\Omega))$ such that

\[
u_t\in L^2(Q_T),\qquad u\in L^{\infty}(0,T;W^{1,2}_0(\Omega)),\qquad D_iu \in L^{\infty}(0,T; L^{p(z)}(\Omega)),\quad i=\overline{1,N};
\]

\item the solution possesses the property of global higher integrability of the partial derivatives:

    \[
    \int_{Q_T}|D_i u|^{p_i(z)+r}\,dz<\infty,\quad r\in (0,r^\ast),\quad i=\overline{1,N},
    \]
    with some $r^\ast$ depending on the space dimension $N$ and the number $\sup_{Q_T}\dfrac{\max_ip_i(z)}{\min_i p_i(z)}$;

    \item there exist the second-order derivatives

    \[
    |D_ju|^{\frac{p_j(z)-2}{2}}D_ju\in W^{1,2}(Q_T),\qquad j=\overline{1,N};
    \]

    \item the same existence and regularity results hold for the regularized anisotropic equation

    \begin{equation}
    \label{eq:reg-intro}
    u_t - \sum_{j=1}^{N}D_j\left((\epsilon^2+|D_ju|^2)^{\frac{p_j(z)-2}{2}}D_j u\right)=f,\qquad \epsilon\in (0,1);
    \end{equation}

    \item it is not required that any of the exponents $p_i(z)$ belongs to the range corresponding to fast or slow diffusion; each of $p_i(z)$ may vary within the interval $\left(\frac{2N}{N+2},p_h^\ast(z)\right)$, where the critical exponent $p_h^\ast(z)$ is defined in \eqref{eq:crit-h}.
\end{itemize}

The results are extended to the solutions of the problem posed in a smooth domain.
However, in this case we additionally assume that $p_i(z)=2$ on $\partial\Omega\times [0,T]$.

\section{Assumptions and main results}
To formulate the results we have to introduce the function spaces the solution of problem \eqref{eq:main} belongs to. For a given vector $\vec p(z)=(p_1(z),\ldots,p_N(z))$ with measurable and bounded components defined on $Q_T$, $p_i(z)>1$ in $Q_T$, we define the functions

\begin{equation}
\label{eq:harm-mean}
p_h(z)=\dfrac{N}{\sum_{i=1}^N\frac{1}{p_i(z)}},\quad \text{the harmonic mean of $p_1(z),\ldots,p_N(z)$},
\end{equation}
and

\begin{equation}
\label{eq:crit-h}
p_h^{\ast}(z)=\begin{cases}\dfrac{Np_h(z)}{N-p_h(z)} & \text{if $N>p_h(z)$},
\\
\text{any number from $(1,\infty)$} & \text{if $N\leq p_h(z)$}.
\end{cases}
\end{equation}
Assume that

\begin{equation}
\label{eq:p-i-1}
p_i:Q_T\mapsto (1,\infty),\qquad \text{$\displaystyle \frac{2N}{N+2}<p_i(z)<p_h^\ast(z)$ in $\overline{Q}_T$, \quad $i=\overline{1,N}$}.
\end{equation}

Given a measurable function $q:\Omega \mapsto (1,\infty)$, let $L^{q(\cdot)}(\Omega)$ be the linear space

\begin{equation}
\label{eq:Lebesgue}
L^{q(\cdot)}(\Omega)=\left\{\text{$u$ is measurable on $\Omega$}:\,\rho_{q(\cdot)}(u)\equiv \int_{\Omega}|u|^{q(x)}\,dx<\infty \right\}.
\end{equation}
The space $L^{q(\cdot)}(\Omega)$ equipped with the norm

\[
\|u\|_{q(\cdot),\Omega}=\inf\left\{\lambda>0: \rho_{q(\cdot)}\left(\frac{u}{\lambda}\right)\leq 1\right\}
\]
is a Banach space. For a vector $\vec q(x)=(q_1(x),\ldots,q_N(x))$ with the components satisfying conditions \eqref{eq:p-i-1} in $\Omega$, we define the variable anisotropic Sobolev space

\begin{equation}
\label{eq:main-space-1}
\begin{split}
W^{1,\vec q(\cdot)}_0(\Omega) & =\left\{\text{the closure of $C_0^\infty(\Omega)$ w.r.t. the norm}
\quad \|u\|_{W^{1,\vec q(\cdot)}_0(\Omega)}=\sum_{i=1}^N\|D_iu\|_{p_i(\cdot),\Omega}\right\}.
\end{split}
\end{equation}
The equivalent definitions of these spaces and the main properties of their elements are discussed in Section \ref{sec:spaces}. To deal with the functions defined on the cylinder $Q_T$, we introduce the spaces of functions depending on $x$ and $t$. For a vector $\vec p:\,\Omega\times (0,T)=Q_T\mapsto \mathbb{R}^N$ with the components satisfying conditions \eqref{eq:p-i-1}

\begin{equation}
\label{eq:space-parab}
\begin{split}
& \mathbb{V}_t(\Omega)=W^{1,\vec p(\cdot,t)}_0(\Omega),\quad \text{for a.e. in $t\in (0,T)$},
\\
& \mathbb{W}(Q_T)=\left\{u: (0,T)\mapsto \mathbb{V}_t(\Omega):\,u\in L^2(Q_T),\,|D_i u|^{p_i(z)}\in L^1(Q_T),\ i=\overline{1,N}\right\}.
\end{split}
\end{equation}
The space $\mathbb{W}(Q_T)$ is equipped with the norm
\[
\|u\|_{\mathbb{W}}=\|u\|_{2,Q_T}+\sum_{i=1}^N\|D_iu\|_{p_i(\cdot),Q_T}.
\]
We will also need the following functions:

\begin{equation}
\label{eq:vee}
p^\vee(z)=\max\{p_1(z),\ldots,p_N(z)\},\qquad p^\wedge(z)=\min\{p_1(z),\ldots,p_N(z)\}.
\end{equation}

\begin{defn}
\label{def:main}
A function $u$ is called weak solution of problem \eqref{eq:main} if

\begin{itemize}
\item[(i)] $u\in C^0([0,T];L^2(\Omega))\cap \mathbb{W}(Q_T)$, $u_t\in L^{2}(Q_T)$,

\item[(ii)] for every $\phi\in \mathbb{W}(Q_T)$

\begin{equation}
\label{eq:def-main}
\int_{Q_T}\left(u_t\phi+\sum_{i=1}^{N}|D_iu|^{p_i(z)-2}D_iu D_i\phi\right)\,dz=\int_{Q_T}f\phi\,dz,
\end{equation}

\item[(iii)] for every $\phi\in L^2(\Omega)$ $(u(x,t)-u_0(x),\phi)_{2,\Omega}\to 0$ as $t\to 0^+$.
\end{itemize}
\end{defn}

The main results of the work are given in the following assertions.

\begin{thm}
\label{th:existence}
Let $\Omega=K_{\vec a}$ be a parallelepiped. Assume that the vector $\vec p(z)$ satisfies conditions \eqref{eq:p-i-1} and

\begin{equation}
\label{eq:p-i-2}
p_i \in C^{0,1}(\overline{Q}_T),
\qquad
\mu=\sup_{Q_T}\frac{p^\vee(z)}{p^\wedge(z)}<1+\frac{1}{N}.
\end{equation}
Then for every $u_0\in W^{1,2}_0(\Omega)\cap W^{1,\vec p(\cdot,0)}_0(\Omega)$ and $f\in L^2(0,T; W^{1,2}_0(\Omega))$ problem \eqref{eq:main} has a unique weak solution $u \in \mathbb{W}(Q_T)$. Moreover,

\[
u\in L^{\infty}(0,T;W^{1,2}_0(\Omega)\cap W^{1,\vec p(\cdot)}_0(\Omega)),\qquad u_t\in L^2(Q_T)
\]
with

\begin{equation}
\label{eq:main-est}
\begin{split}
\displaystyle \|u_t\|_{2,Q_T} & +\operatorname{ess}\sup\limits_{(0,T)}\|u\|_{W^{1,2}(\Omega)}+ \operatorname{ess}\sup\limits_{(0,T)}\|u\|_{W_0^{1,\vec p(\cdot, t)}(\Omega)}
\\
& \leq C\left(1+\|f\|_{L^2(0,T;W_0^{1,2}(\Omega))}+ \|u_0\|_{W_0^{1,2}(\Omega)}+ \|u_0\|_{W_0^{1,\vec p(x,0)}(\Omega)}\right).
\end{split}
\end{equation}
\end{thm}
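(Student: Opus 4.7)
The plan is an approximation scheme combined with carefully tuned energy estimates that exploit the parallelepiped geometry. At the outer level I regularize the principal part as in \eqref{eq:reg-intro} by replacing $|D_j u|^{p_j(z)-2}$ with $(\epsilon^2+|D_j u|^2)^{(p_j(z)-2)/2}$, $\epsilon\in(0,1)$, producing a uniformly parabolic equation. At the inner level I use the orthogonal basis of Dirichlet eigenfunctions of $-\Delta$ on $K_{\vec a}$, which are tensor products of sines. The crucial feature of this basis is that each $\psi_k$ extends by odd reflection in every coordinate to a smooth function periodic over a parallelepiped twice as large; extending the exponents $p_i$ by even reflection preserves their Lipschitz regularity, so that in the course of the estimates every integration by parts can be performed in the periodic setting and boundary integrals never appear. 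Galerkin approximations $u_m^\epsilon(x,t)=\sum_{k=1}^m c_k^{m,\epsilon}(t)\psi_k(x)$ with initial data the $L^2$-projection of $u_0$ are constructed via Carath\'eodory theory applied to the resulting ODE system.

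The heart of the argument consists of three a priori estimates uniform in $m$ and $\epsilon$. Testing with $u_m^\epsilon$ gives the $L^\infty(0,T;L^2(\Omega))$ bound together with control on $\sum_i\|D_iu_m^\epsilon\|_{p_i(\cdot),Q_T}$. Testing with $\partial_t u_m^\epsilon$ (admissible because the Galerkin space is time-independent) yields $\|\partial_t u_m^\epsilon\|_{L^2(Q_T)}$ and the $L^\infty$-bound on the modular $\sum_i\int_\Omega |D_i u_m^\epsilon|^{p_i(x,t)}\,dx$; the subtle point is the chain rule for $|D_j u|^{p_j(z)}$, which generates a correction of the type $|D_j u|^{p_j}\ln|D_j u|\,\partial_t p_j$ that I would absorb using the Lipschitz bound on $p_j$ together with the elementary estimate $|s|^p|\ln s|\le C(1+|s|^{p+\delta})$ and a Young inequality. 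Finally, testing with $-\Delta u_m^\epsilon=\sum_k\lambda_k c_k^{m,\epsilon}(t)\psi_k$, which is a valid Galerkin test, yields the $L^\infty(0,T;W^{1,2}_0(\Omega))$ bound: the periodic extension eliminates boundary integrals, the leading term after integration by parts is $\sum_{i,j}\int(\epsilon^2+|D_iu|^2)^{(p_i-4)/2}[(p_i-1)(D_iu)^2+\epsilon^2](D^2_{ij}u)^2\ge 0$, and the right-hand side contributes $\int\nabla f\cdot\nabla u$ which is controlled since $f\in L^2(0,T;W^{1,2}_0(\Omega))$.

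Having obtained these bounds, Aubin--Lions compactness yields weak-$*$ and a.e.\ convergent subsequences, and a Minty-type monotonicity argument for the vector field $\xi\mapsto((\epsilon^2+|\xi_j|^2)^{(p_j-2)/2}\xi_j)_{j=1}^N$ permits passage to the limit first as $m\to\infty$ and then as $\epsilon\to 0$ in the nonlinear fluxes. The inequality \eqref{eq:main-est} is preserved in both limits by weak lower semicontinuity of the norms and the modular--norm equivalence in variable Lebesgue spaces. Uniqueness is immediate: subtracting the weak formulations \eqref{eq:def-main} for two solutions $u,v\in\mathbb{W}(Q_T)$ and testing with $u-v$ turns the nonlinear part into the sum of non-negative contributions coming from the monotonicity of $s\mapsto|s|^{p_j-2}s$, so $\frac12\frac{d}{dt}\|u-v\|_{2,\Omega}^2\le 0$ and the common initial value forces $u\equiv v$.

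The principal obstacle is the $L^\infty(W^{1,2}_0)$ estimate. Testing with $-\Delta u_m^\epsilon$ generates, besides the good symmetric term, lower-order contributions of the form $\int|D_i u|^{p_i-1}|\ln|D_iu||\,|\nabla p_i|\,|D^2_{ij}u|\,dx$. Their absorption into the good term requires interpolation between the anisotropic energy space $W^{1,\vec p(\cdot,t)}_0(\Omega)$ and a space of higher integrability $L^{p_i+r}$ with some $r>0$, and this interpolation is supplied exactly by the anisotropic Sobolev embedding associated with the harmonic mean $p_h$: the strict inequality $\mu<1+1/N$ is precisely the threshold at which $p_h^\ast>p^\vee$, so that the embedding yields the required self-improving higher integrability and closes the estimate. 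For this reason the three a priori estimates must be coupled and closed simultaneously, rather than sequentially; the parallelepiped structure and the sine basis are invoked precisely to reduce the argument to a periodic setting in which this coupled bootstrap is unobstructed by boundary complications.
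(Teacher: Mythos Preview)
Your strategy is essentially the paper's: regularize as in \eqref{eq:reg-intro}, use Galerkin approximation in the sine eigenbasis, derive the three energy estimates by testing with $u$, $u_t$, and $-\Delta u$, close them via a higher-integrability bootstrap, and pass to the limit first in $m$, then in $\epsilon$, using monotonicity. The periodic-extension device you invoke to kill boundary integrals is a legitimate alternative to the paper's direct computation, which instead observes (see \eqref{eq:3rd-est-pre}) that on each face of $K_{\vec a}$ the outer normal has only one nonzero component, so the two boundary terms arising from integrating $\int D_i(\mathcal{F}_i^{(\epsilon)})\,\Delta u\,dx$ by parts cancel identically.

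One point deserves sharpening, because as stated it misidentifies the mechanism. The inequality $p^\vee(z)<p_h^\ast(z)$ is \emph{already part of hypothesis} \eqref{eq:p-i-1}; it is not what $\mu<1+\tfrac{1}{N}$ secures, and it is not by itself enough to close the loop. Where the gap condition actually enters is quantitative: one integrates $\int(\epsilon^2+|D_iu|^2)^{(p_i+r-2)/2}|D_iu|^2\,dx$ by parts, throwing the extra factor onto $u$ and onto $D^2_{ii}u$. The resulting term in $|u|^{s}$ with $s\approx (p^\vee+r)/(1-\nu)$ is then controlled by the anisotropic interpolation $\|u\|_{s}\le C\bigl(\sum_i\|D_iu\|_{q_i}\bigr)^\theta\|u\|_2^{1-\theta}$ of Lemma~\ref{le:interpol-var}, and the exponent $s\theta/q_i^-$ must be strictly less than $1$ for \emph{every} $i$ so that Young's inequality lets one absorb $\int|D_iu|^{p_i+r}$ into the second-order term on the left of \eqref{eq:3rd-est}. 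It is this arithmetic constraint (together with its companion coming from the $u^2(\epsilon^2+|D_iu|^2)^{(p_i+2r-2)/2}$ piece) that forces $\mu<1+\tfrac{1}{N}$; see Propositions~\ref{pro:choice-1}--\ref{pro:cond-a-b} and Lemma~\ref{le:integration-I-1}. Without isolating this step, the phrase ``self-improving higher integrability closes the estimate'' conceals exactly the place where the argument is delicate and where the condition on $\mu$ is genuinely used.
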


\begin{remark}
The conditions of Theorem \ref{th:existence} allow the exponents $p_j(z)$ to vary within the interval $\left(\frac{2N}{N+2},p_h^\ast\right)$. This means that the diffusion rate in the $j$th direction depends on the point $z\in Q_T$ and may be slow on a part of the domain, {\it i.e.} $p_j(z)\geq 2$, and fast on its complement where $p_j(z)<2$. The second condition in \eqref{eq:p-i-2} can be relaxed if the diffusion type in each direction does not change on the whole of the domain. Relabelling the directions, we may assume $p_i(z) \geq 2$ in $\overline{Q}_T$ for $i= \overline{1, K}$ (slow diffusion) and $p_i(z)\leq 2-\sigma$ for $i=\overline{K+1,N}$ with some $\sigma>0$ (fast diffusion). In this case, Theorem \ref{th:existence} can be proven under the weaker gap condition in \eqref{eq:p-i-2} (see Remarks \ref{rem:integrability-1}, \ref{rem:improved}, \ref{rem:improved-cylinder}):
\begin{equation}
\notag
\label{eq:gap-improved}
\mu=\sup_{Q_T}\frac{p^\vee(z)}{p^\wedge(z)}<1+\frac{2}{N}.
\end{equation}
\end{remark}

\begin{remark}\label{eq:second-der:fast}
In case of fast diffusion in $j$th direction, {\it i.e.} $p_j(z) < 2$ in $Q_T$, we have the following inclusion:
\[
D^2_{ij} u \in L^{p_j(\cdot)}(Q_T) \quad \text{for all} \quad i=\overline{1, N}
\]
- see Remarks \ref{eq:second-der:fast-0} and \ref{eq:second-der:fast-1}
\end{remark}

\begin{thm}
\label{th:high-order-reg}
Let the conditions of Theorem \ref{th:existence} be fulfilled and $u$ be the weak solution of problem \eqref{eq:main}.

\begin{itemize}
\item[(i)] The solution has the property of global higher integrability: for every $i=\overline{1,N}$

\begin{equation}
\label{eq:high-int}
\int_{Q_T}|D_iu|^{p_i(z)+r}\,dz<\infty\quad \text{for every $r\in (0,r^\ast)$},\quad r^\ast= \dfrac{4-2N(\mu-1)}{N+2}.
\end{equation}

\item[(ii)] The solution has the second-order derivatives in the following sense:
\begin{equation}
\label{eq:second-der}
|D_iu|^{\frac{p_i(z)-2}{2}}D_iu\in W^{1,2}(Q_T),\qquad i=\overline{1,N}.
\end{equation}
\end{itemize}
\end{thm}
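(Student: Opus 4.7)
The strategy is to prove both (i) and (ii) simultaneously via uniform-in-$\epsilon$ a priori estimates on the solutions $u^\epsilon$ of the regularized equation \eqref{eq:reg-intro}, and then pass to the limit $\epsilon\to 0^+$. Theorem \ref{th:existence} applied to the nondegenerate problem supplies the smoothness needed to justify the differentiations below, and the strong convergence of $u^\epsilon$ already produced in the existence proof identifies the weak limits of the nonlinear fluxes; weak lower semicontinuity and Fatou's lemma then transfer the estimates from $u^\epsilon$ to $u$.

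For the spatial part of \eqref{eq:second-der}, the natural multiplier is $-D_{kk}^2 u^\epsilon$: on $\Omega=K_{\vec a}$ this is admissible because odd reflection of $u^\epsilon$ across the faces preserves both the Dirichlet condition and the structure of the regularized equation. Integration by parts in $x_k$ generates the positive quadratic form
\[
\sum_j\int_{Q_T}(p_j-1)(\epsilon^2+|D_j u^\epsilon|^2)^{(p_j-2)/2}\,|D_{jk}^2 u^\epsilon|^2\,dz,
\]
together with $\frac12\|D_k u^\epsilon(\cdot,T)\|_{2,\Omega}^2$ and the manageable forcing $\int_{Q_T} D_k f\,D_k u^\epsilon\,dz$. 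The delicate term is a remainder of the shape $\int_{Q_T}|D_j u^\epsilon|^{p_j-1}\bigl|\log(\epsilon^2+|D_j u^\epsilon|^2)\bigr|\,|D_{jk}^2 u^\epsilon|\,|D_k p_j|\,dz$ generated by the $x_k$-dependence of $p_j$; Young's inequality absorbs part of it into the quadratic form and leaves behind a residual of the form $C_\delta\int_{Q_T}(1+|D_j u^\epsilon|^{p_j+\eta})\,dz$ with small $\eta>0$. This residual is exactly the higher integrability of part (i), so (i) and (ii) are coupled and must be proved together by a bootstrap starting from the $L^\infty(0,T;L^{p_j(\cdot,t)}_x)$ bound of Theorem \ref{th:existence}. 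The time component of \eqref{eq:second-der} is obtained in parallel by Steklov time-averaging the regularized equation, testing with time increments of $u^\epsilon$, invoking monotonicity of the anisotropic flux, and combining with $u_t^\epsilon\in L^2(Q_T)$ and the Lipschitz regularity of $p_j$ in $t$.

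For the higher integrability \eqref{eq:high-int}, set $w_j^\epsilon=(\epsilon^2+|D_j u^\epsilon|^2)^{(p_j(z)-2)/4}D_j u^\epsilon$. Since $|w_j^\epsilon|^2\simeq|D_j u^\epsilon|^{p_j}$, Theorem \ref{th:existence} gives $w_j^\epsilon\in L^\infty(0,T;L^2(\Omega))$ uniformly in $\epsilon$, and the spatial estimate above yields $w_j^\epsilon\in L^2(0,T;W^{1,2}(\Omega))$ uniformly. Parabolic Gagliardo--Nirenberg in $Q_T\subset\mathbb{R}^{N+1}$ then produces $w_j^\epsilon\in L^{2(N+2)/N}(Q_T)$, i.e.\ $|D_j u^\epsilon|^{p_j(N+2)/N}\in L^1(Q_T)$. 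Because $p_j(z)$ varies both in $j$ and $z$, this directionwise gain must be converted into a single uniform higher-integrability exponent by combining it with the anisotropic Sobolev embedding into $L^{p_h^\ast}_x$ from \eqref{eq:crit-h}; the resulting optimization produces the sharp value $r^\ast=(4-2N(\mu-1))/(N+2)$, the factor $\mu-1$ measuring the cost of the anisotropy. The main obstacle throughout is this circular coupling between the log residual of the second-order estimate and the higher integrability that controls it, which closes exactly under the gap hypothesis $\mu<1+1/N$.
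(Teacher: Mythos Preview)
Your overall strategy --- work with the regularized solutions $u_\epsilon$, derive $\epsilon$-uniform estimates on second-order quantities and on $\int|D_i u_\epsilon|^{p_i+r}$, close the circular coupling between the two, and pass to the limit using strong convergence of the gradients --- is exactly the paper's, and your derivation of the spatial second-order estimate (test with $-D_{kk}^2 u$, isolate the quadratic form, control the logarithmic remainder by a small $\int|D_j u|^{p_j+\eta}$) mirrors Lemma~\ref{le:a-priori-3} essentially verbatim.

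There are, however, two genuine deviations from the paper that you should flag. First, the paper never works with $u_\epsilon$ directly at this stage: all the computations are done for the Galerkin approximations $u_\epsilon^{(m)}$, which are finite sums of the explicit eigenfunctions \eqref{eq:phi-explicit} and hence globally smooth. This is what legitimizes the integrations by parts in \eqref{eq:3rd-est-pre} and \eqref{eq:start} and makes the boundary terms on $\partial K_{\vec a}$ vanish; your appeal to ``Theorem~\ref{th:existence} applied to the nondegenerate problem'' and to odd reflection does not, by itself, supply $D_{kk}^2 u_\epsilon\in L^2$, and odd reflection does not preserve the equation when $p_j$ is merely Lipschitz and has no parity. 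The paper passes to $u_\epsilon$ only \emph{after} the uniform estimates \eqref{eq:refinement} are in hand for $u_\epsilon^{(m)}$.

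Second, your route to the higher integrability is different from the paper's and your account of where $r^\ast$ comes from is not accurate. The paper does \emph{not} use parabolic Gagliardo--Nirenberg on $w_j^\epsilon$; instead it integrates $\int(\epsilon^2+|D_iu|^2)^{(p_i+r-2)/2}(D_iu)^2$ by parts in $x_i$ (see \eqref{eq:start}) and bounds the resulting $\int|u|^{s}$ via the anisotropic interpolation Lemma~\ref{le:interpol-var}. The specific threshold $r^\ast=(4-2N(\mu-1))/(N+2)$ and the gap condition $\mu<1+1/N$ emerge from the exponent constraint \eqref{eq:cond-++}(c), $s_j\theta_j/q_i^-<1$, which is what allows one to write $\int|u|^{s_j}\le\lambda\int|D_iu|^{q_i^-}+C$ with an arbitrarily small $\lambda$ and hence to obtain \eqref{eq:h-int-par} with a small coefficient $\delta$ on the second-order term --- precisely what is needed to close the loop in Lemma~\ref{le:refine}. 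Your Gagliardo--Nirenberg argument on $w_j^\epsilon$ would yield a direction-dependent gain $r<2p_j(z)/N$, which is actually \emph{larger} than $r^\ast$; no ``optimization with the anisotropic embedding into $L^{p_h^\ast}_x$'' is involved, and that embedding concerns $u$, not $D_j u$. So either your approach gives more than the theorem claims (in which case you should say so and drop the hand-waved optimization), or you are implicitly re-deriving the paper's interpolation constraint without saying so.
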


\begin{remark}
The assertions of Theorem \ref{th:high-order-reg} remain true if $p_i(z)\geq 2$ for all $i=\overline{1,N}$ in $\overline{Q}_T$ and $\mu<1+\dfrac{2}{N}$ - see Remarks \ref{rem:improved} and \ref{rem:improved-cylinder}.
\end{remark}

\begin{remark}
The property of higher integrability of the gradient is well-known for the solutions of the isotropic parabolic equations with $(p,q)$-growth, see, e.g., \cite{Bog-Duz-Marc-2013,A-S-2020}. Let $p_i(z)=p(z)$ for all $i=\overline{1,N}$. In this special case  $p^\vee(z)=p^\wedge(z)=p(z)$, $\mu=1$, and Theorem \ref{th:high-diff-reg} (i) recovers the maximal possible gap between the exponents $p$ and $q$ found in \cite{Bog-Duz-Marc-2013} for the case of isotropic slow diffusion: $2\leq p\leq q<p+\frac{4}{N+2}$.
\end{remark}

\begin{remark}
The assertions of Theorems \ref{th:existence} and \ref{th:high-order-reg} hold true for the solutions of the regularized equation \eqref{eq:reg-intro}. Moreover, the study of the regularized equation constitutes the bulk of the rest of the work. The conclusions of Theorems \ref{th:existence},  \ref{th:high-order-reg} follow by passing to the limit as $\epsilon\to 0$ in the corresponding results for the solutions of the regularized problem given in Theorems \ref{th:existence-reg}, \ref{th:high-diff-reg}.
\end{remark}

The next result addresses the situation where the domain $\Omega$ is smooth. Following the same scheme of arguments, we show that the assertions of Theorems \ref{th:existence} and \ref{th:high-order-reg} remain true but under additional restrictions on the anisotropy of the diffusion operator.

\begin{thm}
\label{th:smooth-domain}
Let $\Omega$ be a bounded domain with $\partial\Omega\in C^k$ with
\[
k\geq 1+N\left(\frac{1}{2}-\frac{1}{p^+}\right),\qquad p^+=\sup_{Q_T}p^\vee(z).
\]
Assume that $p_i(z)=2$ on $\partial\Omega\times [0,T]$. If   $\vec p(z)$ satisfies conditions \eqref{eq:p-i-1}, $p_i \in C^{0,1}(\overline{Q}_T)$, and
\[
\mu=\sup_{Q_T}\frac{p^\vee(z)}{p^\wedge(z)}<1+\frac{1}{N},\quad \text{or}\quad  \text{$p_i(z)\geq 2$ in $\overline{Q}_T$ and $\mu<1+\dfrac{2}{N}$},
\]
then for every $u_0\in W^{1,2}_0(\Omega)\cap W^{1,\vec p(\cdot,0)}_0(\Omega)$ and $f\in L^2(0,T; W^{1,2}_0(\Omega))$ problem \eqref{eq:main} has a unique weak solution $u \in \mathbb{W}(Q_T)$ such that

\[
u\in L^{\infty}(0,T;W^{1,2}_0(\Omega)\cap W^{1,\vec p(\cdot)}_0(\Omega)),\qquad u_t\in L^2(Q_T).
\]
The solution $u$ satisfies estimate \eqref{eq:main-est}. Moreover, the solution possesses the property of higher integrability of the gradient \eqref{eq:high-int}, and inclusions \eqref{eq:second-der} hold.
\end{thm}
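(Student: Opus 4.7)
The proof would follow the template of Theorems \ref{th:existence} and \ref{th:high-order-reg}, replacing the reflection/extension arguments that were available on the parallelepiped by a local straightening of $\partial\Omega$ which is legitimate only because $p_i\equiv 2$ on the lateral boundary. First I would regularize the equation in the form \eqref{eq:reg-intro} and solve the regularized problem by Galerkin projection onto the eigenfunctions $\{w_k\}$ of the Dirichlet Laplacian in $\Omega$. By elliptic regularity in a $C^k$ domain, $w_k\in W^{k,2}(\Omega)\cap W^{1,2}_0(\Omega)$, and the quantitative embedding $W^{k,2}(\Omega)\hookrightarrow W^{1,p^+}(\Omega)$, valid precisely under the hypothesis $k\geq 1+N(\tfrac{1}{2}-\tfrac{1}{p^+})$, guarantees $w_k\in W^{1,\vec p(\cdot)}_0(\Omega)$ and that the anisotropic flux $|D_iw_k|^{p_i(z)-2}D_iw_k$ is an admissible test element. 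Testing with $u_\epsilon^{(n)}$ and with $(u_\epsilon^{(n)})_t$ then produces the basic energy estimates of Theorem \ref{th:existence} uniformly in $\epsilon$ and $n$; these computations are purely integral and do not depend on the shape of $\Omega$.

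Next, to obtain the second-order bound \eqref{eq:second-der} uniformly in the approximation parameters, I would cover $\overline\Omega$ by a finite atlas $\{U_0,U_1,\ldots,U_M\}$ with subordinate partition of unity $\{\eta_m\}$, where $U_0\Subset\Omega$ and each $U_m$, $m\geq 1$, straightens $\partial\Omega\cap U_m$ onto a piece of $\{y_N=0\}$ by a $C^k$ diffeomorphism. In $U_0$ the Caccioppoli chain of Theorem \ref{th:high-order-reg} applies verbatim, being purely local. In a boundary patch $U_m$ the transformed anisotropic operator loses its diagonal form; however, Lipschitz continuity of $p_i$ combined with $p_i\equiv 2$ on $\partial\Omega\times[0,T]$ gives $|p_i(z)-2|\leq C\,\dist(x,\partial\Omega)$, so in a thin boundary strip the leading part of the transformed equation is the heat operator plus a perturbation whose coefficients vanish on the flattened boundary. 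Standard $L^2$-regularity for the Dirichlet heat problem in the half-space then absorbs the perturbation after localization by $\eta_m$, yielding uniform bounds in each $U_m$; summing over $m$ recovers the global $W^{1,2}(Q_T)$ inclusion \eqref{eq:second-der}, together with the auxiliary $L^{p_j(\cdot)}$ bound of Remark \ref{eq:second-der:fast} in the fast-diffusion directions.

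The higher integrability \eqref{eq:high-int} and the a priori estimate \eqref{eq:main-est} then follow by exactly the interpolation argument of Theorem \ref{th:high-order-reg}: once the $L^\infty(0,T;W^{1,\vec p(\cdot)}_0(\Omega))$ bound from the first step and the $L^2(0,T;W^{2,2}(\Omega))$ control from the second step are in hand, their combination is geometry-free. Passing to the limit $n\to\infty$ and then $\epsilon\to 0$, and applying the standard monotonicity argument on the difference of two solutions to conclude uniqueness, completes the proof exactly as for the parallelepiped.

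The main obstacle is the boundary analysis of the second step: the diagonal anisotropic structure is irreversibly destroyed under any nontrivial diffeomorphism, and without the assumption $p_i\equiv 2$ on $\partial\Omega\times[0,T]$ the straightened operator cannot be treated as a small perturbation of the heat operator. Controlling this perturbation rigorously requires balancing the smallness of $|p_i(z)-2|$, of order $\dist(x,\partial\Omega)$, against the degeneracy or singularity of the weight $|D_iu|^{p_i(z)-2}$ near $\partial\Omega$; this is precisely where the Lipschitz regularity of $p_i$ and the sharper gap condition $\mu<1+2/N$ in the uniform slow-diffusion alternative play an essential role.
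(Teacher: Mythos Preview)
Your Galerkin setup with Dirichlet eigenfunctions and your treatment of the basic energy estimates match the paper. The divergence is in the second-order boundary estimate, where you take a genuinely different route that, as sketched, has a gap.

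You propose to flatten $\partial\Omega$ locally by $C^k$ diffeomorphisms and to treat the transformed operator as ``heat operator plus perturbation'', relying on $|p_i(z)-2|\leq C\,\dist(x,\partial\Omega)$. But under any nontrivial change of variables the diagonal operator $\sum_i D_i\big((\epsilon^2+|D_iu|^2)^{(p_i-2)/2}D_iu\big)$ acquires cross terms and becomes a fully coupled quasilinear operator; the smallness of $|p_i-2|$ holds only at $\partial\Omega$, not throughout the boundary patch, so the transformed operator is not a small perturbation of the heat operator away from $\{y_N=0\}$. ``Standard $L^2$-regularity for the Dirichlet heat problem in the half-space'' is a linear tool; you have not indicated how it absorbs a nonlinear perturbation whose coefficients are $O(\dist)$ in the exponent but $O(1)$ in magnitude. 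The localization by $\eta_m$ also produces commutator terms involving $D_j\eta_m$ multiplied by the nonlinear flux, which you do not address.

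The paper avoids flattening altogether. It tests the approximate equation with $-\Delta u_\epsilon^{(m)}$ and applies a global integration-by-parts identity of Grisvard that expresses the boundary integral through the second fundamental form $\mathcal{B}$ of $\partial\Omega$ and the tangential and normal components of the flux $\vec v$, $v_i=(\epsilon^2+|D_iu|^2)^{(p_i-2)/2}D_iu$, and of $\vec w=\nabla u$. The hypothesis $p_i=2$ on $\partial\Omega\times[0,T]$ is used \emph{algebraically}, not as a size condition: on $\partial\Omega$ it forces $\vec v=\nabla u$, hence $\vec v_\tau=0$ since $u|_{\partial\Omega}=0$ makes $\nabla u$ normal. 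This annihilates the dangerous boundary terms, leaving only $\int_{\partial\Omega}v_\nu w_\nu\operatorname{tr}\mathcal{B}\,dS\leq K\int_{\partial\Omega}|\nabla u|^2\,dS$, which is controlled by a trace-type argument and absorbed into the interior terms. No coordinate change, no partition of unity.

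One further point: the anisotropic interpolation inequality underlying the higher-integrability step is proved on parallelepipeds; the paper handles this by extending $u$ by zero to a rectangle $K_{\vec c}\supset\Omega$ with $p_i\equiv 2$ on $K_{\vec c}\setminus\Omega$, so your claim that this part is entirely ``geometry-free'' is slightly optimistic.
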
%

Let us outline the contents of the work. In Section \ref{sec:spaces} we introduce the variable Lebesgue and Sobolev spaces and collect the known results on the anisotropic Sobolev spaces used in the rest of the work. The rectangular domains are natural for the anisotropic spaces because they allow one to extend a given function to a broader domain or the whole space by a function from the same anisotropic space. This is not always possible in a smooth domain because the standard procedure based on rectifying the boundary portion mixes the partial derivatives, which have different orders of integrability. This difficulty may be overcome by considering the class of domains with ``$p(\cdot)$-extension property", i.e., the domains for which such an extension is possible without altering the anisotropic space, see \cite{Boureanu-Velez-Santiago-2019}. A parallelepiped is one of the known examples of anisotropic $\vec p(\cdot)$-extension domains, although the complete characterization of this class is not available thus far.

In Section \ref{sec:density} we construct the basis for the anisotropic variable Sobolev space in the rectangular and smooth domains. In both cases we take for the basis the set of eigenfunctions of the Dirichlet problem for the Laplace operator and show that it is dense in the anisotropic variable Sobolev space.

In Section \ref{sec:regularization} the regularized nondegenerate problems for equation \eqref{eq:reg-intro} in a rectangular domain are formulated. The regularized problems are solved with the method of Galerkin in the basis constructed in Section \ref{sec:density}. Section \ref{sec:estimates} is entirely devoted to deriving a priori estimates for the solutions of the finite-dimensional approximate problems. The global regularity of the basis functions allows one to obtain global uniform estimates on the higher-order derivatives of the approximate solutions. These estimates entail the regularity of the sought solution, which is obtained later as the limit of the sequence of approximations.

It is proven in Section \ref{sec:higher-integr} that the partial derivatives of the finite-dimensional approximations are integrable in $Q_T$ with the orders $p_i(z)+\delta$ with some $\delta>0$, instead of the orders $p_i(z)$ prompted by the equation. The integrals are bounded by a constant that does not depend of $\epsilon$ and the number of the approximation. To derive these estimates we prove a special anisotropic interpolation inequality and combine it with the uniform a priori estimates on the second-order derivatives obtained in Section \ref{sec:estimates}.

The main results are proven in Section \ref{sec:proofs}. We prove first the existence and regularity results for the solution of the regularized problem \eqref{eq:reg-intro}. The proof of the existence theorem relies on the weak and strong compactness of the sequence of the approximate solutions, which follow from the uniform a priori estimates. The limits of the nonlinear terms are identified by monotonicity. Moreover, the property of higher integrability of the spatial derivatives allows one to prove the strong and pointwise convergence of the gradients, which is used then in the proof of the second-order regularity of the obtained solution. The proofs of Theorems \ref{th:existence}, \ref{th:high-order-reg} follows the same scheme with the difference that now we have to pass to the limit as $\epsilon\to 0$ in the family of solutions of equation \eqref{eq:reg-intro}, which requires an additional step in the arguments.

In Section \ref{sec:smooth-domain} the results are extended to the case of a smooth domain with the boundary $\partial\Omega\in C^k$. It turns out that such an extension is possible if at every point of the lateral boundary of the cylinder $Q_T$ the flux vector with the components $|D_iu|^{p_i(z)-2}D_iu$ either equals zero, or points in the direction of the normal to the boundary. The latter is true either if all $p_i(z)=2$ on the boundary, or if the normal vector has only one nonzero component, which means that $\Omega$ is a rectangular domain. In the isotropic equation \eqref{eq:p-Laplace} this restriction does not appear because the flux $|\nabla u|^{p(z)-2}\nabla u$ is always proportional to $\nabla u$.

\section{The function spaces}
\label{sec:spaces}
\subsection{Variable Lebesgue spaces}
A thorough insight into the theory of variable Lebegue and Sobolev space can be found in the monograph \cite{DHHR-2011}. Here we confine ourselves to presenting only the properties of the Lebesgue spaces \eqref{eq:Lebesgue} needed in this work. Let $\Omega\subset \mathbb{R}^N$ be a bounded Lipschitz domain, and $p:\Omega\mapsto \mathbb{R}$ be a measurable function with values in an interval $[p^-,p^+]\subset (1,\infty)$, $p^\pm=const$. The space $L^{p(\cdot)}(\Omega)$ is defined by the modular

\[
\rho_{p(\cdot)}(u)=\int_{\Omega}|u|^{p(x)}\,dx
\]
The dual space of $L^{p(\cdot)}(\Omega)$ is the space $L^{p'(\cdot)}(\Omega)$ with the conjugate exponent $p'(x)=\frac{p(x)}{p(x)-1}$.

The generalized H\"older inequality holds: for every $f\in L^{p(\cdot)}(\Omega)$ and $g\in L^{p'(\cdot)}(\Omega)$

\begin{equation}
\label{eq:Holder}
\int_\Omega| f g|\,dx\leq 2\|f\|_{p(\cdot),\Omega}\|g\|_{p'(\cdot),\Omega}.
\end{equation}

If $p,q$ are measurable functions in $\Omega$ and $1<p(x)\leq q(x)<\infty$ a.e. in $\Omega$, then the embedding $L^{q(\cdot)}(\Omega)\subset L^{p(\cdot)}(\Omega)$ is continuous and

\[
\|u\|_{p(\cdot),\Omega}\leq C\|u\|_{q(\cdot),\Omega}.
\]
The relation between the modular and the norm of $L^{p(\cdot)}(\Omega)$ is given by the following inequalities:

\begin{equation}
\label{eq:norm-mod}
\min\left\{\|u\|^{p^-}_{p(\cdot),\Omega}, \|u\|^{p^+}_{p(\cdot),\Omega}\right\} \leq \rho_{p(\cdot)}(u) \leq \max\left\{\|u\|^{p^-}_{p(\cdot),\Omega}, \|u\|^{p^+}_{p(\cdot),\Omega}\right\}.
\end{equation}

The set $C_0^\infty(\Omega)$ is dense in $L^{p(\cdot)}(\Omega)$.

\subsection{Anisotropic variable Sobolev spaces}
Let $p_0, p_1, \ldots, p_N$ be measurable functions defined on $\Omega$ and $\vec{p}(x)=(p_1(x),\ldots,p_N(x))$ be a vector. By $C_{\rm log}(\overline{\Omega})$ we denote the set of functions continuous in $\overline{\Omega}$ with a logarithmic modulus of continuity:
\begin{equation}
\label{eq:log-cont}
|q(x)-q(y)|\leq \omega(|x-y|), \quad \forall x,y\in \overline{\Omega}, \;\;\vert x-y\vert <\frac{1}{2}.
\end{equation}
 where {$\omega$} is a nonnegative function such that
\[
\limsup_{s\to 0^+}\omega(s)\ln \frac{1}{s}=C,\quad C=const.
\]
Let us assume that
\[
p_i \in C_{{\rm log}}(\overline{\Omega}),\quad p_i(x)\in \left[p^-,p^+\right]\subset (1,\infty),\quad  i=\overline{1,N}.
\]
Let $p_h$, $p_h^\ast$, $p^\vee$, $p^\wedge$ be the functions defined in \eqref{eq:harm-mean}, \eqref{eq:crit-h}, \eqref{eq:vee}.
Apart from the space $W^{1,\vec p(\cdot)}_0(\Omega)$ introduced in \eqref{eq:main-space-1}, we consider the following spaces:

\begin{itemize}

\item[(i)] $\qquad \displaystyle \begin{cases}
& W^{1,(p_0(\cdot),\vec{p}(\cdot))}(\Omega)=\left\{u\in L^{p_0(\cdot)}(\Omega):\;D_iu\in L^{p_i(\cdot)}(\Omega), \  i=\overline{1,N} \right\},
\\
& \|u\|_{W^{1,(p_0,\vec p)}(\Omega)}=\|u\|_{p_0(\cdot),\Omega}+\sum_{i=1}^{N}\|D_iu\|_{p_i(\cdot),\Omega},
\end{cases}$

\bigskip

\item[(ii)]
$ \qquad \displaystyle
\begin{cases}
& W^{1,(p^{\vee},\vec{p}(\cdot))}(\Omega)=\{u\in L^{p^\vee(\cdot)}(\Omega):\,D_iu\in L^{p_i(\cdot)}(\Omega),  i=\overline{1,N}\},
\\
&
\|u\|_{W^{1,(p^\vee(\cdot),\vec p(\cdot))}(\Omega)}=\|u\|_{p^\vee(\cdot),\Omega}+\sum_{i=1}^{N}\|D_iu\|_{p_i(\cdot),\Omega},
\end{cases}
$

\bigskip

\item[(iii)]$\stackrel{\circ}{W}^{1,(p_0(\cdot),\vec{p}(\cdot))}(\Omega) =W^{1,(p_0(\cdot),\vec{p}(\cdot))}(\Omega)\cap W^{1,1}_0(\Omega)$,

    \noindent $\stackrel{\circ}{W}^{1,(p^\vee(\cdot),\vec{p}(\cdot))}(\Omega)= W^{1,(p^\vee(\cdot),\vec{p}(\cdot))}(\Omega)\cap W^{1,1}_0(\Omega)$,

  \bigskip

\item[(iv)] $W_0^{1,(p^\vee(\cdot),\vec p(\cdot))}(\Omega)$ = $\left\{\text{the closure of $C_0^\infty(\Omega)$ w.r.t. the norm of $W^{1,(p^\vee(\cdot),\vec p(\cdot))}(\Omega)$}\right\}$.
\end{itemize}

\subsection{Preliminaries}
\label{subsec:domain}
By a rectangular domain $\Omega$ we mean a parallelepiped $K_{\vec a}$. The boundary %$\Gamma$%
of a rectangular domain $\Omega$ is represented in the form $\partial\Omega=\Gamma_0\cup \Gamma$, where

\begin{enumerate}
\item $\Gamma$ is composed of $(N-1)$-dimensional open sets $\Gamma_i$, which are the faces of $\Gamma$ and lay in the coordinate planes $x_i=\pm a_i$,

    \item $\Gamma_0$ contains the edges and vertices and has the surface measure zero.
\end{enumerate}

\begin{prop}[\cite{Fan-2011}, Th.2.4 and \cite{Zhikov-2011-density}, Sec.13]
\label{pro:Fan-1}
Let $\Omega\subset \mathbb{R}^N$ be a bounded domain with Lipschitz boundary. If $p_i \in C_{\rm log}(\overline{\Omega})$, then

\begin{enumerate}
\item $C_0^\infty(\Omega)$ is dense in $\stackrel{\circ}{W}^{1,(p^\vee(\cdot),\vec{p}(\cdot))}(\Omega)$ and, thus,  $\stackrel{\circ}{W}^{1,(p^\vee(\cdot),\vec{p}(\cdot))}(\Omega)=W_0^{1,(p^\vee(\cdot),\vec p(\cdot))}(\Omega)$;

    \item $C^\infty(\Omega)$ is dense in  $W^{1,(p^\vee(\cdot),\vec p(\cdot))}(\Omega)$ if $\Omega$ is a rectangular domain.
\end{enumerate}
\end{prop}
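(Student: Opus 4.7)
The plan is to prove both density statements by the classical three-step procedure of truncation, mollification, and (for statement (1)) a cut-off near the boundary, paying careful attention to the anisotropic and variable-exponent nature of the norms. The common analytic input is that, under the log-H\"older assumption $p_i\in C_{\rm log}(\overline{\Omega})$, the standard mollifier $u_\eps=u\ast\rho_\eps$ converges to $u$ in each space $L^{p_i(\cdot)}(\Omega')$ for any subdomain $\Omega'$ compactly contained in $\Omega$, and translations are continuous in $L^{p_i(\cdot)}$. This is the Diening maximal-function theorem and its standard corollaries, see \cite{DHHR-2011}.

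For statement (2), let $\Omega=K_{\vec a}$ and $u\in W^{1,(p^\vee(\cdot),\vec p(\cdot))}(\Omega)$. First I would truncate at level $k$ so that $T_k(u)=\max\{-k,\min\{u,k\}\}\to u$ in the norm by dominated convergence. Next I exploit the rectangular geometry: across each face $\{x_i=\pm a_i\}$, I extend $T_k(u)$ by even reflection in the $x_i$ variable, and extend each exponent $p_j$ by the same reflection. Because reflection acts one coordinate at a time, the direction-by-direction integrability is preserved and the extended exponents remain log-H\"older continuous on a larger rectangle $K_{2\vec a}$. Multiplying by a smooth cut-off equal to $1$ on $\overline{K_{\vec a}}$ and compactly supported in $K_{2\vec a}$, I obtain a function $\widetilde u\in W^{1,(p^\vee(\cdot),\vec p(\cdot))}(\mathbb{R}^N)$ with compact support. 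Standard mollification $\widetilde u_\eps=\widetilde u\ast\rho_\eps\in C^\infty(\mathbb{R}^N)$ then converges in the anisotropic variable-exponent norm on $\Omega$, since each derivative $D_i\widetilde u_\eps$ is the mollifier of $D_i\widetilde u\in L^{p_i(\cdot)}$ and mollifiers are continuous in every such space. Restricting to $\Omega$ and diagonalizing in $(k,\eps)$ yields a $C^\infty(\overline{\Omega})$ approximation of $u$.

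For statement (1), let $u\in\stackrel{\circ}{W}^{1,(p^\vee(\cdot),\vec p(\cdot))}(\Omega)=W^{1,(p^\vee(\cdot),\vec p(\cdot))}(\Omega)\cap W^{1,1}_0(\Omega)$ with $\Omega$ Lipschitz. Extending $u$ by zero outside $\Omega$ gives a function in $W^{1,1}(\mathbb{R}^N)$ which still lies in the variable-exponent space. By a partition of unity subordinate to a covering of $\overline{\Omega}$ and local flattening of $\partial\Omega$, I reduce to a function supported near a piece of the boundary, which can be translated inward by a small amount $\tau\nu$, with $\nu$ an interior direction, so that its support is compactly contained in $\Omega$. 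Continuity of translation in every $L^{p_i(\cdot)}$ ensures $u^\tau\to u$ in $W^{1,(p^\vee(\cdot),\vec p(\cdot))}(\Omega)$ as $\tau\to 0$. A final mollification of $u^\tau$ at scale $\eps\ll\tau$ produces a sequence in $C_0^\infty(\Omega)$ converging to $u$ in the required norm; a diagonal selection in $(k,\tau,\eps)$ completes the proof and identifies $\stackrel{\circ}{W}^{1,(p^\vee(\cdot),\vec p(\cdot))}(\Omega)$ with $W_0^{1,(p^\vee(\cdot),\vec p(\cdot))}(\Omega)$.

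The main obstacle is the simultaneous control of several variable exponents $p_i$ at a single boundary point. Continuity of translation and convergence of mollifiers in $L^{p_i(\cdot)}$ fail for merely continuous exponents and are precisely what the log-H\"older condition \eqref{eq:log-cont} is designed to provide. A secondary subtlety explains why statement (2) requires $\Omega$ to be rectangular: flattening a smooth boundary mixes coordinate directions and would spoil the direction-by-direction integrability of $D_iu$, whereas reflection across axis-aligned faces is the unique maneuver that keeps each $p_i$-integrability class intact. On a general Lipschitz domain one must therefore settle for the weaker interior-approximation statement (1). The full details of these arguments are exactly those carried out in \cite{Fan-2011,Zhikov-2011-density}.
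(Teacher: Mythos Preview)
The paper itself does not prove this proposition; it is quoted from \cite{Fan-2011,Zhikov-2011-density}. Your outline follows the classical truncation--extension--mollification scheme used in those references, and for part (2) your reflection argument on $K_{\vec a}$ is exactly the right idea: coordinate-wise even reflection preserves the directional integrabilities $D_ju\in L^{p_j(\cdot)}$, and once the function is compactly supported in a larger rectangle, mollification converges by Diening's maximal-function theorem. That part is fine.

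For part (1), however, there is a genuine gap. You invoke ``continuity of translation in every $L^{p_i(\cdot)}$'' to conclude $u^\tau\to u$ in the anisotropic norm. This is false: the shift operator $u\mapsto u(\cdot-h)$ is \emph{not} continuous on $L^{p(\cdot)}$ for non-constant $p$, even when $p$ is smooth or log-H\"older (see, e.g., \cite[Sec.~3.6]{DHHR-2011}). The failure is intrinsic to variable-exponent spaces, since translating $u$ does not translate the exponent. After truncation the function $u$ itself is bounded and one can recover $\|u(\cdot-h)-u\|_{p^\vee(\cdot)}\to 0$ by comparison with $L^{p^-}$, but the derivatives $D_iu$ are not bounded, so $\|D_iu(\cdot-\tau\nu)-D_iu\|_{p_i(\cdot)}\to 0$ cannot be justified this way.

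The references you cite repair this by not separating the two operations. One writes the ``translate inward, then mollify'' map as a single convolution $u\ast\rho_\eps^{\tau}$, where $\rho_\eps^{\tau}(y)=\rho_\eps(y+\tau\nu)$ is an off-center mollifier supported in a ball of radius $\eps$ around $-\tau\nu$. Choosing $\eps<c\tau$ so that the support of $\rho_\eps^{\tau}$ lies in the inward cone guaranteed by the Lipschitz boundary, the resulting function is in $C_0^\infty(\Omega)$, and convergence $u\ast\rho_\eps^{\tau}\to u$ in each $L^{p_i(\cdot)}$ follows from the pointwise bound $|u\ast\rho_\eps^{\tau}|\leq C\,M u$ together with boundedness of the Hardy--Littlewood maximal operator on $L^{p_i(\cdot)}$ under the log-H\"older hypothesis. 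This is the mechanism in \cite{Zhikov-2011-density}; no separate translation-continuity statement is needed or true.
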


\begin{prop}[\cite{Fan-2011}, Th.2.5]
  \label{pro:compact-embedding}
  Let $\Omega$ be a rectangular
  domain and $\vec p \in C^{0}(\overline \Omega)^N$.
  If $q\in C^0(\overline{\Omega})$ and

  \[
  q(x)<\max\left\{p^\vee(x),p_h^\ast(x)\right\}\quad \text{for all $x\in \overline{\Omega}$},
  \]
  then

  \[
  W^{1,(p^\vee(\cdot),\vec p(\cdot))}(\Omega)\hookrightarrow L^{q(\cdot)}(\Omega)\quad \text{(compact embedding)}.
  \]
\end{prop}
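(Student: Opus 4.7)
The plan is to reduce to the constant-exponent anisotropic Sobolev embedding (Troisi's inequality) by localization, using the continuity of the exponents on the compact set $\overline{\Omega}$, and then to obtain compactness either via interpolation with a strictly subcritical continuous embedding, or via a Fr\'echet--Kolmogorov equicontinuity argument.

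First, I would prove the continuous embedding into $L^{r(\cdot)}(\Omega)$ for any continuous $r$ with $r(x)<\max\{p^\vee(x),p_h^\ast(x)\}$ in $\overline{\Omega}$. By compactness of $\overline{\Omega}$ and uniform continuity of the $p_i$, there exists $\delta>0$ such that on every sub-rectangle $R\subset\Omega$ of diameter $<\delta$ the constant exponents
\[
p_i^{R,+}=\max_{\overline{R}}p_i,\qquad p_i^{R,-}=\min_{\overline{R}}p_i
\]
are as close as we wish and, in particular, the associated harmonic-mean critical exponent $(p_h^{R,-})^{\ast}$ majorizes $r(x)$ on $R$. Cover $\Omega$ with finitely many such rectangles, take a smooth partition of unity, and for each piece apply the classical Troisi inequality
\[
\|v\|_{(p_h^{R,-})^\ast,R}\leq C\prod_{i=1}^N\|D_iv\|_{p_i^{R,-},R}^{1/N}
\]
to $\varphi_k u$ with $\varphi_k$ a partition function. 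The embedding $L^{p_i^{R,+}}(R)\hookrightarrow L^{p_i(\cdot)}(R)$ (and back, with the reverse inequality on the other side) combined with the norm--modular relation \eqref{eq:norm-mod} then yields a bound of $\|u\|_{r(\cdot),R}$ in terms of $\|u\|_{W^{1,(p^\vee(\cdot),\vec p(\cdot))}(R)}$. Summing over the finite cover gives the continuous embedding $W^{1,(p^\vee(\cdot),\vec p(\cdot))}(\Omega)\hookrightarrow L^{r(\cdot)}(\Omega)$. The trivial inclusion into $L^{p^\vee(\cdot)}(\Omega)$ handles the range where $p^\vee(x)\geq p_h^\ast(x)$.

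For compactness, by continuity and compactness of $\overline{\Omega}$, the strict inequality $q(x)<\max\{p^\vee(x),p_h^\ast(x)\}$ may be upgraded to the existence of $\eta>0$ and a continuous $r(x)$ with $q(x)+\eta\leq r(x)<\max\{p^\vee(x),p_h^\ast(x)\}$. Given a bounded sequence $\{u_n\}\subset W^{1,(p^\vee(\cdot),\vec p(\cdot))}(\Omega)$, I would argue as in the Rellich--Kondrachov theorem: using the bound $\|D_i u_n\|_{p_i(\cdot),\Omega}\le C$, the translation moduli $\int_{\Omega'}|u_n(x+h)-u_n(x)|\,dx$ are controlled by $|h|$ via the one-dimensional absolute continuity in each coordinate and the H\"older inequality \eqref{eq:Holder}. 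This gives equicontinuity in $L^1$, and by Fr\'echet--Kolmogorov, precompactness in $L^1(\Omega')$. Interpolating between $L^1$ and the continuous embedding into $L^{r(\cdot)}(\Omega)$ (with $r>q+\eta/2$) via the modular inequality \eqref{eq:norm-mod} yields a convergent subsequence in $L^{q(\cdot)}(\Omega)$.

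The main obstacle I expect is the localization step in the presence of anisotropy: unlike in the isotropic case, a straight Troisi inequality with variable exponents is not available off the shelf, and one must carefully arrange that the partition-of-unity cutoffs do not mix partial derivatives with different integrabilities. On rectangular $\Omega$ this works cleanly because the covering can be taken by axis-aligned sub-rectangles, so each $D_i(\varphi_k u)=\varphi_k D_iu+u\,D_i\varphi_k$ remains in $L^{p_i(\cdot)}$; on a general Lipschitz domain one would need the anisotropic $\vec p(\cdot)$-extension property mentioned in the introduction. The gap between the subcritical exponent $q$ and the critical $\max\{p^\vee,p_h^\ast\}$ provides exactly the room needed to avoid invoking log-continuity for the embedding itself.
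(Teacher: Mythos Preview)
The paper does not give its own proof of this proposition: it is stated as a citation of \cite[Th.\,2.5]{Fan-2011} and no argument is supplied. Hence there is no ``paper's proof'' to compare your attempt against; the proposition functions here as a quoted background result.

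That said, your outline is along the standard lines by which such results are established (and is, in spirit, close to Fan's original argument): localize to small axis-aligned sub-rectangles on which the continuous exponents are nearly constant, apply the constant-exponent Troisi inequality to cut-off pieces, and upgrade to compactness by combining $L^1$ precompactness (via translation estimates and Fr\'echet--Kolmogorov) with the continuous subcritical embedding into some $L^{r(\cdot)}$ with $r>q$. The only places where you should tighten the write-up are: (i) make explicit that Troisi in the form you quote requires $\varphi_k u$ to have compact support, which is guaranteed by the partition of unity, and that the lower-order term $u\,D_i\varphi_k$ is in $L^{p_i(\cdot)}$ because $u\in L^{p^\vee(\cdot)}\hookrightarrow L^{p_i(\cdot)}$; (ii) the interpolation step in variable exponent spaces is cleanest done as ``a.e.\ convergence (from a subsequence of the $L^1$-Cauchy sequence) plus uniform $L^{r(\cdot)}$-bound $\Rightarrow$ $L^{q(\cdot)}$-convergence by Vitali'', rather than by a literal three-line interpolation inequality; and (iii) when $p^\vee(x)$ and $p_h^\ast(x)$ swap dominance across $\overline\Omega$, you should say explicitly that on each small rectangle one uses whichever of the two mechanisms (trivial inclusion or Troisi) yields the larger target exponent, and that continuity plus compactness of $\overline\Omega$ makes the finite cover uniform.
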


\begin{prop}[\cite{Fan-2011}, Th.2.6]
\label{pro:Poincare-like}
Let $\Omega$ be a bounded domain and $p_i \in C^0(\overline{\Omega})$, $i=\overline{1, N}$. If

\begin{equation}
\label{eq:exp}
p^\vee(x)<p_h^\ast(x)\quad \text{for all $x\in \overline{\Omega}$,}
\end{equation}
then

\begin{equation}
\label{eq:almost-Poincare}
\|u\|_{p^\vee(\cdot),\Omega}\leq C\sum_{i=1}^N\|D_iu\|_{p_i(\cdot),\Omega}\quad \text{for all $u\in W_0^{1,(p^\vee(\cdot),\vec p(\cdot))}(\Omega)$}
\end{equation}
with a constant $C$ independent of $u$. Hence, under condition \eqref{eq:exp} the functional $\sum_{i=1}^N\|D_iu\|_{p_i(\cdot),\Omega}$ defines an equivalent norm of $W_0^{1,(p^\vee(\cdot),\vec p(\cdot))}(\Omega)$.
\end{prop}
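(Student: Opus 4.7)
The plan is to deduce the inequality from a contradiction argument based on compactness. Since the functional $\sum_{i=1}^N\|D_iu\|_{p_i(\cdot),\Omega}$ and the norm $\|u\|_{p^\vee(\cdot),\Omega}$ are both continuous on $W_0^{1,(p^\vee(\cdot),\vec p(\cdot))}(\Omega)$, and this space is by definition the closure of $C_0^\infty(\Omega)$ in its natural norm, it suffices to establish \eqref{eq:almost-Poincare} for $u\in C_0^\infty(\Omega)$ and then extend by density.

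Suppose \eqref{eq:almost-Poincare} fails. Then, after normalization, there is a sequence $\{u_k\}\subset C_0^\infty(\Omega)$ with $\|u_k\|_{p^\vee(\cdot),\Omega}=1$ while $\sum_{i=1}^N\|D_iu_k\|_{p_i(\cdot),\Omega}\to 0$. In particular $\{u_k\}$ is bounded in $W_0^{1,(p^\vee(\cdot),\vec p(\cdot))}(\Omega)$. I would transfer the problem to a parallelepiped: pick $K=K_{\vec a}\supset\overline\Omega$, extend each $u_k$ by zero to $K$, and extend the exponents $p_i$ continuously from $\overline\Omega$ to $\overline K$ in such a way that the strict inequality $p^\vee(x)<p_h^\ast(x)$ is preserved on the whole of $\overline K$. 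Such an extension exists because on the compact set $\overline\Omega$ the continuous function $p_h^\ast-p^\vee$ is bounded below by a positive constant, and one can achieve the same gap on $\overline K$ by a standard extension.

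On the parallelepiped $K$, Proposition \ref{pro:compact-embedding} applies with the choice $q(x)=p^\vee(x)$, which satisfies $q(x)<p_h^\ast(x)\leq \max\{p^\vee(x),p_h^\ast(x)\}$ on $\overline K$. Hence the embedding $W_0^{1,(p^\vee(\cdot),\vec p(\cdot))}(K)\hookrightarrow L^{p^\vee(\cdot)}(K)$ is compact and a subsequence $u_{k_j}\to u$ in $L^{p^\vee(\cdot)}(K)$, so that $\|u\|_{p^\vee(\cdot),K}=1$. Simultaneously $\|D_iu_{k_j}\|_{p_i(\cdot),K}\to 0$ implies $D_iu=0$ distributionally in $K$ for every $i$. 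Since $K$ is connected, $u$ is constant on $K$, and since every $u_{k_j}$ vanishes outside $\overline\Omega$, the limit satisfies $u\equiv 0$ on $K\setminus\overline\Omega$, forcing $u\equiv 0$. This contradicts $\|u\|_{p^\vee(\cdot),K}=1$ and proves \eqref{eq:almost-Poincare}. The equivalence of $\sum_{i=1}^N\|D_iu\|_{p_i(\cdot),\Omega}$ with the norm of $W_0^{1,(p^\vee(\cdot),\vec p(\cdot))}(\Omega)$ is then immediate.

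The main obstacle I anticipate is the technical construction of the continuous extension of the exponents $p_i$ to $\overline K$ while maintaining the strict pointwise inequality $p^\vee<p_h^\ast$; this is delicate because $p_h^\ast$ is a nonlinear function of the $p_i$'s, but it is manageable by working with a small enough neighborhood of $\overline\Omega$ inside $K$ where, by continuity, the gap is preserved, and then damping the extension into $\overline K$ without closing the gap. If one prefers to avoid this extension step, an alternative is to argue compactness directly on $\Omega$ via a Troisi-type argument after a partition-of-unity localization into small pieces on which $p^\vee$ is close to constants $\bar q$ with $\bar q<p_h^\ast$, but the contradiction route above is structurally cleaner.
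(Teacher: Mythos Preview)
The paper does not supply a proof of this proposition; it is quoted verbatim as Theorem~2.6 of Fan (2011) and used as a black box alongside the other preliminaries from that reference. So there is no ``paper's own proof'' to compare against.

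Your contradiction-via-compactness argument is a legitimate route to the inequality and is essentially the standard way such Poincar\'e-type statements are derived once a compact embedding is available. The reduction to $C_0^\infty(\Omega)$ by density, the zero extension to a circumscribed parallelepiped $K$, the invocation of Proposition~\ref{pro:compact-embedding} with $q=p^\vee$, and the identification of the limit as a constant that must vanish are all sound. One small remark: to conclude $D_iu=0$ in $\mathcal D'(K)$ you are implicitly using that $D_iu_{k_j}\to 0$ in $L^{p_i(\cdot)}(K)\hookrightarrow L^1(K)$ together with $u_{k_j}\to u$ in $L^1(K)$; this is fine but worth stating.

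The one genuinely soft spot is exactly the one you flag: producing continuous extensions $\tilde p_i$ on $\overline K$ that agree with $p_i$ on $\overline\Omega$ and still satisfy $\tilde p^\vee<\tilde p_h^\ast$ everywhere on $\overline K$. Since $p_h^\ast$ depends nonlinearly on the whole vector $\vec p$, a naive Tietze extension of each $p_i$ separately need not preserve the gap. A clean fix that avoids the issue altogether is this: first extend each $p_i$ continuously to $\mathbb{R}^N$ by Tietze; by uniform continuity there is an open neighbourhood $U$ of $\overline\Omega$ on which the strict inequality persists; then, because the $u_k$ are compactly supported in $\Omega$, you are free to choose the parallelepiped $K$ \emph{after} fixing $U$, taking $K$ to be any parallelepiped with $\overline\Omega\subset K$ and then working only on the component of $K\cap U$ containing $\Omega$---or, more simply, note that nothing forces $K$ to be chosen before the extension: pick $K$ first, extend by Tietze to $\overline K$, and observe that since all norms of $u_k$ and $D_iu_k$ over $K$ coincide with those over $\Omega$, you only need the compact embedding to hold for a sequence supported in $\Omega$, for which the values of $\tilde p_i$ on $K\setminus\Omega$ are irrelevant to the norms and can be adjusted (e.g.\ made all equal to a common constant near $\partial K$) without affecting the argument. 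Either way, the obstacle is cosmetic rather than structural.
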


\begin{prop}[\cite{Fan-2011}, Th.2.7]
\label{pro:equiv-spaces}
\begin{enumerate}
\item Let $\Omega$ be a rectangular %-like
domain and $p_i \in C^0(\overline{\Omega})$, $i=\overline{1,N}$. If \eqref{eq:exp} holds, then
    \[
    W^{1,(p^\vee(\cdot),\vec p(\cdot))}(\Omega)=W^{1,(p_0(\cdot),\vec p(\cdot))}(\Omega)
    \]
    for any $p_0 \in C^0(\overline{\Omega})$ satisfying $p_0(x)<p^\ast_h(x)$ everywhere in $\overline{\Omega}$.

    \item Let $\Omega$ be a bounded domain, $p_i \in C^0(\overline{\Omega})$, $i=\overline{1,N}$. If \eqref{eq:exp} holds, then
        \[
        \text{$W_0^{1,(p^\vee(\cdot),\vec p(\cdot))}(\Omega)=W_0^{1,(1,\vec p(\cdot))}(\Omega)$, and $\stackrel{\circ}{W}^{1,(p^\vee(\cdot),\vec p(\cdot))}(\Omega) = \stackrel{\circ}{W}^{1,(1,\vec p(\cdot))}(\Omega)$.}
        \]
\end{enumerate}
\end{prop}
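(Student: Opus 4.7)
The plan is to reduce both assertions to a single ingredient: the anisotropic variable-exponent Sobolev embedding stating that if $u\in W^{1,1}_0(\Omega)$ and $D_iu\in L^{p_i(\cdot)}(\Omega)$ for every $i$, then $u\in L^{p_h^\ast(\cdot)}(\Omega)$, with $p_h^\ast$ as in \eqref{eq:crit-h}. Taking this Troisi-type inequality for granted, Part (2) is immediate. Indeed, $\stackrel{\circ}{W}^{1,(1,\vec p(\cdot))}(\Omega)=W^{1,(1,\vec p(\cdot))}(\Omega)\cap W^{1,1}_0(\Omega)$, so every $u$ in this space satisfies the hypotheses and thus $u\in L^{p_h^\ast(\cdot)}(\Omega)$. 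Since $\overline{\Omega}$ is compact and \eqref{eq:exp} is strict, a continuity argument yields $p^\vee(x)\leq p_h^\ast(x)-c$ for some $c>0$, and the modular--norm relation \eqref{eq:norm-mod} gives the continuous embedding $L^{p_h^\ast(\cdot)}(\Omega)\hookrightarrow L^{p^\vee(\cdot)}(\Omega)$. Hence $u\in L^{p^\vee(\cdot)}(\Omega)$, and the nontrivial inclusion $\stackrel{\circ}{W}^{1,(1,\vec p(\cdot))}(\Omega)\subset \stackrel{\circ}{W}^{1,(p^\vee(\cdot),\vec p(\cdot))}(\Omega)$ holds; the reverse is trivial since $p^\vee\geq 1$ on a bounded domain. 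The same argument carried out at the level of $C_0^\infty$-closures delivers $W_0^{1,(p^\vee(\cdot),\vec p(\cdot))}(\Omega)=W_0^{1,(1,\vec p(\cdot))}(\Omega)$.

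For Part (1), I would exploit the rectangular geometry of $\Omega=K_{\vec a}$ via extension by reflection. Given $u\in W^{1,(p_0(\cdot),\vec p(\cdot))}(K_{\vec a})$, reflect successively across each face $x_i=\pm a_i$ to build an extension $\tilde u$ on a strictly larger parallelepiped $\tilde\Omega\supset\supset K_{\vec a}$; the same even reflection applied to each $p_i$ and to $p_0$ produces continuous exponents $\tilde p_i,\tilde p_0$ on $\overline{\tilde\Omega}$ that still satisfy \eqref{eq:exp}. Because each reflection is orthogonal to a coordinate axis, $D_i\tilde u$ is merely the odd reflection of $D_iu$, whence $D_i\tilde u\in L^{\tilde p_i(\cdot)}(\tilde\Omega)$. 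Multiplying by a cutoff $\eta\in C_0^\infty(\tilde\Omega)$ with $\eta\equiv 1$ on $K_{\vec a}$ gives $\bar u=\eta\tilde u\in W^{1,1}_0(\tilde\Omega)$ with $D_i\bar u\in L^{\tilde p_i(\cdot)}(\tilde\Omega)$. Part (2) applied on $\tilde\Omega$ then yields $\bar u\in L^{\tilde p_h^\ast(\cdot)}(\tilde\Omega)$ and, restricting, $u\in L^{p_h^\ast(\cdot)}(\Omega)$. Since both $p^\vee(x)<p_h^\ast(x)$ and $p_0(x)<p_h^\ast(x)$ strictly on the compact set $\overline{\Omega}$, the continuous embedding $L^{p_h^\ast(\cdot)}(\Omega)\hookrightarrow L^{p^\vee(\cdot)}(\Omega)\cap L^{p_0(\cdot)}(\Omega)$ closes both inclusions $W^{1,(p^\vee(\cdot),\vec p(\cdot))}(\Omega)\subseteq W^{1,(p_0(\cdot),\vec p(\cdot))}(\Omega)$ and its reverse.

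The main obstacle is the variable-exponent Troisi inequality itself, since the classical statement is for constant exponents and only $C^0$ regularity is assumed on the $p_i$. I would prove it by localization: cover $\overline{\Omega}$ by finitely many open pieces $U_k$ on which the oscillation of each $p_i$ and of $p_h^\ast$ is at most a small $\delta>0$ (possible by uniform continuity on a compact set), choose a subordinate partition of unity $\{\chi_k\}$, apply the classical constant-exponent Troisi inequality to each $\chi_ku$ with exponents taken at the local minima in the gradient terms and local maxima on the target side, and recombine via \eqref{eq:norm-mod} together with the generalized H\"older inequality \eqref{eq:Holder}. The strict inequality in \eqref{eq:exp} provides the buffer needed to absorb the oscillation $\delta$ after a sufficiently fine refinement; this is the only point at which \eqref{eq:exp} cannot be relaxed. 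Once this localized inequality is in hand, the rest of the proof is essentially bookkeeping with variable Lebesgue norms.
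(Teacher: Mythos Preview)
The paper does not prove this proposition; it is quoted from \cite{Fan-2011} (Theorem 2.7 there) and stated without argument, like the neighbouring Propositions \ref{pro:Fan-1}--\ref{pro:main-space}. Your sketch is therefore an independent attempt. Part (2) is essentially sound: once a variable-exponent Troisi inequality is available for functions in $W^{1,1}_0(\Omega)$ with $D_iu\in L^{p_i(\cdot)}(\Omega)$ (obtainable by zero extension to an enclosing cube followed by your localization to small boxes), the inclusion for the $\stackrel{\circ}{W}$ spaces follows. The equality of the $W_0$ spaces is even more direct via Proposition \ref{pro:Poincare-like} applied to differences $\phi_n-\phi_m$ of an approximating $C_0^\infty$ sequence.

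There is, however, a genuine gap in your argument for Part (1). After reflecting $u$ to $\tilde u$ on $\tilde\Omega$ and multiplying by the cutoff $\eta$, you claim $D_i\bar u\in L^{\tilde p_i(\cdot)}(\tilde\Omega)$. But
\[
D_i\bar u=\eta\,D_i\tilde u+(D_i\eta)\,\tilde u,
\]
and the second summand is controlled only through $\tilde u\in L^{\tilde p_0(\cdot)}(\tilde\Omega)$. Since the hypothesis allows $p_0(x)<p_i(x)$ (take, e.g., $p_0\equiv 1$), one cannot conclude $(D_i\eta)\tilde u\in L^{\tilde p_i(\cdot)}$, so $\bar u$ need not belong to $\stackrel{\circ}{W}^{1,(1,\tilde{\vec p}(\cdot))}(\tilde\Omega)$ and your appeal to Part (2) breaks down. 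The remedy is to discard the extension-and-cutoff step altogether: because $\Omega=K_{\vec a}$ is already rectangular, the constant-exponent Troisi inequality of Proposition \ref{pro:Acerbi-Fusco-1994} applies on rectangular subdomains to functions in $W^{1,(1,\vec p)}$ \emph{without} any zero-trace requirement. Run your localization directly on a finite cover of $K_{\vec a}$ by small sub-parallelepipeds, apply Proposition \ref{pro:Acerbi-Fusco-1994} on each with the local minimal exponents $p_i^-$, and sum; the strict inequalities $p^\vee<p_h^\ast$ and $p_0<p_h^\ast$ on $\overline\Omega$ absorb the oscillation exactly as you describe.
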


\begin{prop}[\cite{Fan-2011}, Th.2.8]
\label{pro:main-space}
Let $\Omega$ be a bounded domain, $p_i \in C^0(\overline{\Omega})$, $i=\overline{1, N }$. If \eqref{eq:exp} is fulfilled, then

\[
W^{1,\vec p(\cdot)}_0(\Omega)=W_0^{1,(p^\vee(\cdot),\vec p(\cdot))}(\Omega).
\]
\end{prop}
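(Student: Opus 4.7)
The plan is to show that, under condition \eqref{eq:exp}, the two norms used to define the respective closures of $C_0^\infty(\Omega)$ are equivalent on $C_0^\infty(\Omega)$, so that the completions yield the same Banach space.

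Concretely, set
\[
\|u\|_1 = \sum_{i=1}^N \|D_iu\|_{p_i(\cdot),\Omega},\qquad \|u\|_2 = \|u\|_{p^\vee(\cdot),\Omega}+\sum_{i=1}^N \|D_iu\|_{p_i(\cdot),\Omega}.
\]
The inequality $\|u\|_1\leq \|u\|_2$ is trivial. For the reverse bound I would invoke Proposition~\ref{pro:Poincare-like}: since \eqref{eq:exp} holds, the almost-Poincaré inequality \eqref{eq:almost-Poincare} is valid for every $u\in W_0^{1,(p^\vee(\cdot),\vec p(\cdot))}(\Omega)$, and in particular for every test function $u\in C_0^\infty(\Omega)$. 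This yields
\[
\|u\|_{p^\vee(\cdot),\Omega}\leq C\sum_{i=1}^N \|D_iu\|_{p_i(\cdot),\Omega}\quad\text{for all } u\in C_0^\infty(\Omega),
\]
so $\|u\|_2\leq (1+C)\|u\|_1$ on $C_0^\infty(\Omega)$ and the two norms are equivalent there.

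Once this equivalence is established, a sequence $\{\varphi_n\}\subset C_0^\infty(\Omega)$ is $\|\cdot\|_1$-Cauchy if and only if it is $\|\cdot\|_2$-Cauchy. To identify the limits unambiguously with the same object rather than with abstract equivalence classes, I would work inside a common ambient space: the continuous embeddings $L^{p_i(\cdot)}(\Omega)\hookrightarrow L^1(\Omega)$ show that both norms control $\|\nabla u\|_{L^1(\Omega)}$, and by Proposition~\ref{pro:equiv-spaces}(2) the space $W_0^{1,(p^\vee(\cdot),\vec p(\cdot))}(\Omega)$ coincides with $W_0^{1,(1,\vec p(\cdot))}(\Omega)\subset W^{1,1}_0(\Omega)$. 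Consequently each Cauchy sequence in either norm converges in $W^{1,1}_0(\Omega)$ to the same limit $u$, and from the norm equivalence this $u$ satisfies $D_iu\in L^{p_i(\cdot)}(\Omega)$ and $u\in L^{p^\vee(\cdot)}(\Omega)$ simultaneously.

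The only subtle point is that the $L^{p^\vee(\cdot)}(\Omega)$-regularity of a limit obtained via $\|\cdot\|_1$-convergence is not built into the definition of $W^{1,\vec p(\cdot)}_0(\Omega)$; it must be deduced \emph{a posteriori} by transferring the almost-Poincaré inequality to the limit, using lower semicontinuity of the Luxemburg norm under a.e. convergence of a subsequence. This is the essential place where \eqref{eq:exp} enters, and without it (e.g., if $p^\vee(x)\geq p_h^\ast(x)$ somewhere) the two spaces generally fail to coincide.
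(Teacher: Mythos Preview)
The paper does not supply its own proof of this proposition; it is quoted verbatim from \cite[Th.~2.8]{Fan-2011} and used as a black box. Your argument is correct and is indeed the natural one: both spaces are closures of $C_0^\infty(\Omega)$, and Proposition~\ref{pro:Poincare-like} furnishes exactly the missing inequality needed to show the two defining norms are equivalent on $C_0^\infty(\Omega)$, after which the closures coincide. The additional care you take in identifying the abstract limits inside a common ambient space such as $W^{1,1}_0(\Omega)$ is appropriate, though one could argue slightly more directly: equivalent norms on a dense subspace yield the same completion, and the completion can be realized concretely because both norms dominate the $W^{1,1}$ norm.
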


\section{Dense sets in anisotropic Sobolev spaces}
\label{sec:density}

We distinguish between the cases where $\Omega$ is a rectangular domain, or has the smooth boundary.

\subsection{The rectangular domain $K_{\vec a}$}
 Let us consider first the case $\Omega=K_{\vec a}$. The eigenfunctions of the Dirichlet problem for the Laplace operator

\begin{equation}
\label{eq:eigen-Dir}
\text{$\Delta \psi_{\mathbf k}+\lambda_{\mathbf k}\psi_{\mathbf k}=0$ in $K_{\vec a}$},\qquad \text{$\psi_{\mathbf k}=0$ on $\Gamma$},
\end{equation}
form an orthogonal basis of $L^2(K_{\vec a})$. The solutions of problem \eqref{eq:eigen-Dir} have the form

\begin{equation}
\label{eq:phi-explicit}
\psi_{\mathbf{k}}(x)=C\prod_{i=1}^N\sin\left(\frac{\pi k_i x_i}{a_i}\right),\quad \lambda_{\mathbf k}= \pi^2\sum_{i=1}^N\frac{k_i^2}{a_i^2}
\end{equation}
where $\mathbf{k}=(k_1,\ldots,k_N)$, $k_i\in \mathbb{N}$, $C=const>0$ is the normalizing constant.
It follows that $\Delta\psi_{\mathbf k}=0$ on $\Gamma=\partial\Omega\setminus \Gamma_0$. For all ${\mathbf k}\in \mathbb{N}^N$ and $s\in \mathbb{N}$

\[
\text{$\Delta^s\psi_{\mathbf k}+(-\lambda_{\mathbf k})^s\psi_{\mathbf k}=0$ in $\Omega$},\qquad \text{$\Delta^{s-1}\psi_{\mathbf k}=0$ on $\Gamma=\partial\Omega\setminus \Gamma_0$}.
\]
The set of eigenpairs $(\psi_{\mathbf k},\lambda_{\mathbf k})$ can be reordered and presented in the form

\[
\{\psi_i\}_{i=1}^{\infty}, \quad 0 < \lambda_1\leq \lambda_2\leq \ldots\leq \lambda_i \to \infty \;\;\text{as $i\to \infty$},
\]
where $\psi_i$ corresponding to different $\lambda_i$ are orthogonal in $L^2(\Omega)$, but the same eigenvalue $\lambda_i$ may correspond to various $\psi_i$. The eigenfunctions are normalized by the condition $\|\psi_i\|_{2,\Omega}=1$.

Fix $m=1,2,\ldots$. For every multi-index $\alpha\in \mathbb{R}^N$ and any vector $\mathbf{k}$, $k_i=1,2,\ldots$,
\[
\left|D^\alpha\psi_{\mathbf k}\right|= C\pi^{2m}\prod_{i=1}^N \left(\frac{k_i}{a_i}\right)^{\alpha_i}\Phi_i(x_i),
\]
where
\[
\Phi_i(s)=\begin{cases}
\left|\sin\left(\frac{\pi k_i s}{a_i}\right)\right| & \text{if $\alpha_i=0$ or $\alpha_i$ is an even number},
\\
\left|\cos\left(\frac{\pi k_i s}{a_i}\right)\right| & \text{if $\alpha_i$ is an odd number}.
\end{cases}
\]

It is straightforward to check that for $|\alpha|=2m$

\[
\|D^\alpha \psi_{\mathbf k}\|_{2,\Omega}^2= C^2\pi^{4m}\prod_{i=1}^N \left(\frac{k_i}{a_i}\right)^{2\alpha_i}\|\Phi_i\|^2_{2,(-a_i,a_i)}=\pi^{4m} \prod_{i=1}^N \left(\frac{k_i}{a_i}\right)^{2\alpha_i}.
\]
Let us denote $k_+=\max_i k_i$, $a_+=\max_i a_i$, $a_-=\min_i a_i$. Then

\[
\prod_{i=1}^N \left(\frac{k_i}{a_i}\right)^{2\alpha_i}\leq
\leq \frac{a_+^{4m}}{a_-^{4m}}\left(\sum_{i=1}^N \frac{k_i^2}{a_i^2}\right)^{2m}=\frac{1}{\pi^{4m}}\frac{a_+^{4m}}{a_-^{4m}} \left(\pi^2\sum_{i=1}^{N}\frac{k_i^2}{a_i^2}\right)^{2m}= \left(\frac{a_+}{\pi a_-}\right)^{4m} \|\Delta^m \psi_{\mathbf{k}}\|^2_{2,\Omega}.
\]
Thus, for every $m=0,1,2,\ldots$ there exists a constant $C=C(a_\pm,N,M,m)$ such that
\begin{equation}
\label{eq:mixed-eigenfunctions-even}
\|D^\alpha \psi_{\mathbf k}\|_{2,\Omega}^2\leq C \|\Delta^m \psi_{\mathbf{k}}\|^2_{2,\Omega},\quad |\alpha|=2m.
\end{equation}
Let $|\alpha|=2m-1$, $m=1,2,\ldots$, be an odd number. Since $k_+\geq 1$, in this case we have
\[
\prod_{i=1}^N \left(\frac{k^2_i}{a_i^2}\right)^{\alpha_i}
\leq
%\frac{k_+^{2|\alpha|}}{a_-^{2|\alpha|}}
%=
\frac{k_+^{2(2m-1)}}{a_-^{2(2m-1)}}
\leq a_-^{2}\left(\frac{a_+}{a_-}\right)^{4m} \frac{k_+^{4m}}{a_+^{4m}}\leq a_-^2\left(\frac{a_+}{\pi a_-}\right)^{4m} \left(\pi^2\sum_{i=1}^N \frac{k_i^2}{a_i^{2}}\right)^{2m} = C(a_\pm,m) \|\Delta^m \psi_{\mathbf{k}}\|^2_{2,\Omega},
\]

whence
\begin{equation}
\label{eq:mixed-eigenfunctions-odd}
\|D^\alpha \psi_{\mathbf k}\|_{2,\Omega}^2\leq C' \|\Delta^m \psi_{\mathbf{k}}\|^2_{2,\Omega},\quad |\alpha|=2m-1.
\end{equation}

\subsection{Density of $\{\psi_i\}_{i=1}^{\infty}$ in $W_0^{1,\vec{p}(\cdot)}(\Omega)$}

Consider the Hilbert space

\[
H^m(\Omega)=\{u: D^\alpha u\in L^2(\Omega), \,\alpha=(\alpha_1,\ldots,\alpha_N),\,|\alpha|=\sum\alpha_i\leq m\}
\]
equipped with the usual norm

\[
\|u\|_{H^m(\Omega)}=\sum_{0\leq |\alpha|\leq m}\|D^\alpha u\|_{2,\Omega}.
\]

\begin{prop}
\label{pro:by-parts}
Let $\Omega$ be a rectangular domain. For every $v\in C_0^\infty(\Omega)$

\[
\int_{\Omega}|\Delta v|^2\,dx=\sum_{i,j=1}^N\int_{\Omega}|D^2_{ij}v|^2\,dx.
\]
\end{prop}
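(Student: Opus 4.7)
The identity is a pure integration-by-parts fact; the hypothesis $v\in C_0^\infty(\Omega)$ guarantees that $v$ and all its derivatives vanish in a neighbourhood of $\partial\Omega$, so no boundary contributions ever arise and the shape of $\Omega$ plays no role beyond being an open set (the author probably names it ``rectangular'' only because this is how the proposition will be used on the basis $\{\psi_{\mathbf k}\}$).

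The plan is to expand $|\Delta v|^2$ as a double sum and integrate by parts twice in each off-diagonal term. Concretely, I would write
\[
\int_{\Omega}|\Delta v|^{2}\,dx=\sum_{i,j=1}^{N}\int_{\Omega}D^{2}_{ii}v\,D^{2}_{jj}v\,dx
=\sum_{i=1}^{N}\int_{\Omega}(D^{2}_{ii}v)^{2}\,dx+\sum_{i\neq j}\int_{\Omega}D^{2}_{ii}v\,D^{2}_{jj}v\,dx,
\]
and treat each off-diagonal integral by first integrating by parts in $x_i$ (moving one derivative off the first factor) and then in $x_j$ (moving one derivative off the resulting third-order factor onto $D^{2}_{jj}v$). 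Since $v\in C_0^\infty(\Omega)$, all boundary terms vanish and, by commutativity of mixed partials of smooth functions, the procedure yields
\[
\int_{\Omega}D^{2}_{ii}v\,D^{2}_{jj}v\,dx
=-\int_{\Omega}D_{i}v\,D^{3}_{ijj}v\,dx
=\int_{\Omega}D^{2}_{ij}v\,D^{2}_{ij}v\,dx
=\int_{\Omega}(D^{2}_{ij}v)^{2}\,dx.
\]

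Inserting this into the previous display collects the diagonal and off-diagonal contributions into a single sum
\[
\int_{\Omega}|\Delta v|^{2}\,dx=\sum_{i=1}^{N}\int_{\Omega}(D^{2}_{ii}v)^{2}\,dx+\sum_{i\neq j}\int_{\Omega}(D^{2}_{ij}v)^{2}\,dx=\sum_{i,j=1}^{N}\int_{\Omega}(D^{2}_{ij}v)^{2}\,dx,
\]
which is the claimed identity. There is no real obstacle: the only care required is to keep track of the signs when integrating by parts twice and to verify that the boundary terms vanish, which is immediate because $v$ has compact support in $\Omega$. If one later wishes to extend the identity to $v\in H^{2}(\Omega)\cap H^{1}_0(\Omega)$ on the parallelepiped (as will be useful for the Galerkin basis), it suffices to note that $C_0^\infty(\Omega)$ is dense in that space and that both sides are continuous bilinear forms on $H^{2}(\Omega)$, so the equality extends by continuity.
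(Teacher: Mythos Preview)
Your proof is correct and follows essentially the same route as the paper: both arguments expand $|\Delta v|^2$ and integrate by parts twice, using only the compact support of $v$ to discard the boundary terms. The paper packages the computation in divergence form ($\int \operatorname{div}(\nabla v)\,\Delta v\,dx$, then a second integration by parts on $\operatorname{div}(\nabla D_i v)$), whereas you write out the double sum $\sum_{i,j}\int D^2_{ii}v\,D^2_{jj}v$ and swap derivatives term by term, but the underlying mechanism is identical.
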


\begin{proof} Integrating by parts two times we obtain

\[
\begin{split}
\int_{\Omega}|\Delta v|^2\,dx & = \int_{\Omega}\operatorname{div}(\nabla v)\Delta v\,dx
=\sum_{i=1}^N\int_{\Gamma_i}(\vec{\nu}\cdot \nabla v) \Delta v\,dS-\sum_{i=1}^N\int_\Omega D_iv\operatorname{div}(\nabla (D_iv))\,dx
\\
& = \sum_{i=1}^N\int_{\Gamma_i}(\vec{\nu}\cdot \nabla v) \Delta v\,dS-\sum_{j=1}^N \sum_{i=1}^N\int_{\Gamma_j} D_iv (\vec{\nu}\cdot \nabla(D_i v))\,dS+\int_\Omega \sum_{i,j=1}^N \left|D^2_{ij}v \right|^2
\,dx,
\end{split}
\]
where $\vec{\nu}$ is the outer normal to $\Gamma_i$. Since $\operatorname{supp} v \Subset \Omega$, the boundary integrals vanish.
\end{proof}

\begin{prop}
\label{pro:equiv-norm-1}
For every $v\in C_0^\infty(\Omega)$

\begin{equation}
\label{eq:equiv-norm-2}
c\|v\|^2_{H^2(\Omega)}\leq \|\Delta v\|^2_{2,\Omega}\leq \|v\|^2_{H^2(\Omega)}
\end{equation}
with an independent of $v$ constant $c$, and $\|\Delta v\|_{2,\Omega}$ is an equivalent norm of $C_0^\infty(\Omega)\cap H^2(\Omega)$.
\end{prop}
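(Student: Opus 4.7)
The strategy is to derive both inequalities directly from the identity of Proposition \ref{pro:by-parts} together with the Poincaré–Friedrichs inequality applied to $v$ and $\nabla v$ (valid because $v\in C_0^\infty(\Omega)$).

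First, by Proposition \ref{pro:by-parts} we have the identity
\[
\|\Delta v\|^2_{2,\Omega}=\sum_{i,j=1}^N\|D^2_{ij}v\|^2_{2,\Omega}.
\]
For the upper estimate, since squaring is subadditive in the sense $\sum a_i^2\leq \bigl(\sum a_i\bigr)^2$ for nonnegative $a_i$, the right-hand side is bounded by $\bigl(\sum_{i,j}\|D^2_{ij}v\|_{2,\Omega}\bigr)^2$, which in turn is no larger than $\|v\|^2_{H^2(\Omega)}$ because the latter sum extends over all multi-indices of order at most $2$.

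For the lower estimate, I would bound each piece of $\|v\|_{H^2(\Omega)}=\|v\|_{2,\Omega}+\sum_i\|D_iv\|_{2,\Omega}+\sum_{i,j}\|D^2_{ij}v\|_{2,\Omega}$ by a multiple of $\|\Delta v\|_{2,\Omega}$. The last block is controlled by Cauchy–Schwarz combined with the above identity: $\sum_{i,j}\|D^2_{ij}v\|_{2,\Omega}\leq N\bigl(\sum_{i,j}\|D^2_{ij}v\|^2_{2,\Omega}\bigr)^{1/2}=N\|\Delta v\|_{2,\Omega}$. For the lower-order blocks, applying the Poincaré inequality in $C_0^\infty(\Omega)$ twice gives $\|v\|_{2,\Omega}\leq C\|\nabla v\|_{2,\Omega}\leq C'\|\nabla^2 v\|_{2,\Omega}$, and again $\|\nabla^2 v\|_{2,\Omega}=\|\Delta v\|_{2,\Omega}$ by Proposition \ref{pro:by-parts}; similarly $\sum_i\|D_iv\|_{2,\Omega}\leq \sqrt{N}\|\nabla v\|_{2,\Omega}\leq C''\|\Delta v\|_{2,\Omega}$. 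Summing, $\|v\|_{H^2(\Omega)}\leq C\|\Delta v\|_{2,\Omega}$, hence $c\|v\|^2_{H^2(\Omega)}\leq \|\Delta v\|^2_{2,\Omega}$ with $c=1/C^2$.

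The two sandwich inequalities immediately give that $v\mapsto \|\Delta v\|_{2,\Omega}$ is a norm on $C_0^\infty(\Omega)\cap H^2(\Omega)$ equivalent to $\|\cdot\|_{H^2(\Omega)}$: positivity and the triangle inequality follow from the corresponding properties of $\|\cdot\|_{2,\Omega}$, while definiteness follows because $\|\Delta v\|_{2,\Omega}=0$ forces $\|v\|_{H^2(\Omega)}=0$ via the lower bound. There is no real obstacle here; the only point requiring care is the use of Poincaré for compactly supported smooth functions to promote a bound on second derivatives to a bound on the full $H^2$-norm, which is standard and uses only that $\Omega$ is bounded.
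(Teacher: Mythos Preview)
Your proof is correct and takes a genuinely different route from the paper's. The paper obtains the key estimate $\|\nabla v\|_{2,\Omega}\leq C\|\Delta v\|_{2,\Omega}$ via the spectral decomposition: expanding $v=\sum v_i\psi_i$ in the Dirichlet eigenbasis, one has $\|\nabla v^{(k)}\|_{2,\Omega}^2=\sum_{i\leq k}\lambda_i v_i^2$ and $\|\Delta v\|_{2,\Omega}^2=\sum_i\lambda_i^2 v_i^2$, and Cauchy--Schwarz together with the Poincar\'e inequality for $v$ gives the bound, after which Proposition~\ref{pro:by-parts} controls the second-order part. You instead notice that because $v\in C_0^\infty(\Omega)$ has compact support, each $D_iv$ also lies in $C_0^\infty(\Omega)$, so the Poincar\'e inequality applies directly to $D_iv$, giving $\|D_iv\|_{2,\Omega}\leq C\|\nabla D_iv\|_{2,\Omega}$; summing and invoking Proposition~\ref{pro:by-parts} yields the same conclusion with no spectral theory. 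Your argument is shorter and uses only that $\Omega$ is bounded; the paper's argument is heavier here but develops the Fourier-series computations that are reused immediately afterward in Proposition~\ref{pro:1} and Proposition~\ref{epro:Fourier-m}, where the eigenfunction expansion is essential.
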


\begin{proof}
The second inequality is obvious. To prove the first one we represent $v\in C_0^\infty(\Omega)$ by the Fourier series in the basis $\{\psi_i\}$: $v=\sum_{i=1}^\infty v_i\psi_i$, $v_i=(v,\psi_i)_{2,\Omega}$,

\[
v^{(k)}=\sum_{i=1}^kv_i\psi_i\to v\quad \text{in $L^2(\Omega)$}.
\]
By inequality \eqref{eq:almost-Poincare} with $p_i=2$ we have $\|v\|_{2,\Omega}\leq C\|\nabla v\|_{2,\Omega}$ with a constant $C$ which does not depend on $v$. Since $v\in C_0^\infty(\Omega)$, then $\Delta v\in L^2(\Omega)$ and using the fact that $\{\psi_i\}$ are orthonormal, we get

\[
\|\Delta v\|^2_{2,\Omega}=\sum_{i=1}^{\infty}\lambda^2_i v_i^2<\infty.
\]
For every $k\in \mathbb{N}$

\begin{equation}
\label{eq:P-I}
\begin{split}
\|\nabla v^{(k)}\|_{2,\Omega}^{2}& = (\nabla v^{(k)},\nabla v^{(k)})_{2,\Omega}=-(\Delta v^{(k)},v^{(k)})_{2,\Omega}=\sum_{i=1}^{k}\lambda_iv_i^2
\\
&
\leq \left(\sum_{i=1}^k\lambda_i^2v_i^2\right)^\frac{1}{2}\left(\sum_{i=1}^k v_i^2\right)^\frac{1}{2}
\leq \|\Delta v\|_{2,\Omega}\|v^{(k)}\|_{2,\Omega}.
\end{split}
\end{equation}
Hence, $\|\nabla v^{(k)}\|_{2,\Omega}\leq C \|\Delta v\|_{2,\Omega}$ where the constant $C$ is from \eqref{eq:almost-Poincare}. Repeating these estimates for the function $\nabla (v^{(k)}-v^{(m)})$ we obtain

\[
\|\nabla (v^{(k)}-v^{(m)})\|_{2,\Omega}^2\leq 2\|\Delta v\|_{2,\Omega}\|v^{(k)}-v^{(m)}\|_{2,\Omega}\to 0\quad \text{as $k,m\to \infty$}.
\]
By the Cauchy inequality and due to monotonicity of the sequence $\{\lambda_i\}$

\[
\begin{split}
\|v^{(k)}\|^2_{2,\Omega} & =\sum_{i=1}^kv_i^2=\sum_{i=1}^k\frac{|v_i|}{\sqrt{\lambda_i}} \left(\sqrt{\lambda_i}|v_i|\right)\leq \dfrac{1}{\lambda_1}\|v^{(k)}\|_{2,\Omega}\|\nabla v^{(k)}\|_{2,\Omega}.
\end{split}
\]
Inequality \eqref{eq:P-I} entails the uniform estimate $\|\nabla v^{(k)}\|_{2,\Omega}\leq C\|\Delta v\|_{2,\Omega}$. It follows that $\nabla v^{(k)}\to \nabla v$ in $L^2(\Omega)$ and

\[
\|v\|_{2,\Omega}+\|\nabla v\|_{2,\Omega}\leq C'\|\nabla v\|_{2,\Omega}\leq C''\|\Delta v\|_{2,\Omega}
\]
with independent of $v$ constant $C''$. By Proposition \ref{pro:by-parts}, $\sum_{i,j=1}^{N}\|D^2_{ij}v\|^2_{2,\Omega}=\|\Delta v\|^2_{2,\Omega}$. Gathering these estimates we conclude that there is a constant $C>0$ such that for every $v\in C_0^\infty(\Omega)$

\[
C \sum_{0\leq |\alpha|\leq 2}\|D^\alpha v\|_{2,\Omega}^2\leq \|\Delta v\|_{2,\Omega}^2.
\]
\end{proof}

\begin{prop}
\label{pro:1}
Let $m\geq 2$ be an even number. There is a constant $C''>1$ such that for every $v\in C_0^{\infty}(\Omega)$

\[
\dfrac{1}{C''}\|\Delta^{\frac{m}{2}}v\|_{2,\Omega}\leq \|v\|_{H^m(\Omega)}\leq C''\|\Delta^{\frac{m}{2}}v\|_{2,\Omega}.
\]
\end{prop}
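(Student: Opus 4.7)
I would prove the inequality by induction on $s=m/2\geq 1$, taking the base case $s=1$ directly from Proposition \ref{pro:equiv-norm-1}. The upper bound $\|\Delta^{m/2}v\|_{2,\Omega}\leq C''\|v\|_{H^m(\Omega)}$ is immediate from the triangle inequality, since $\Delta^{m/2}v$ is a linear combination of pure partials $D^\alpha v$ with $|\alpha|=m$, so the real content is the lower bound $\|v\|_{H^m(\Omega)}\leq C''\|\Delta^{m/2}v\|_{2,\Omega}$.

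For the inductive step ($s\geq 2$), the key observation is that $v\in C_0^\infty(\Omega)$ implies $D^\beta v\in C_0^\infty(\Omega)$ for every multi-index $\beta$, so Proposition \ref{pro:by-parts} may be applied to intermediate derivatives of $v$. Applying Proposition \ref{pro:by-parts} to $\Delta^{s-1}v\in C_0^\infty(\Omega)$ and using that partial derivatives commute, I obtain
\[
\|\Delta^s v\|_{2,\Omega}^2=\sum_{i,j=1}^N\|D^2_{ij}\Delta^{s-1}v\|_{2,\Omega}^2 =\sum_{i,j=1}^N\|\Delta^{s-1}(D^2_{ij}v)\|_{2,\Omega}^2.
\]
Since $D^2_{ij}v\in C_0^\infty(\Omega)$, the inductive hypothesis applied to each $D^2_{ij}v$ gives
\[
\sum_{i,j=1}^N\|D^2_{ij}v\|_{H^{2s-2}(\Omega)}^2\leq C_{s-1}\sum_{i,j=1}^N\|\Delta^{s-1}(D^2_{ij}v)\|_{2,\Omega}^2 =C_{s-1}\|\Delta^s v\|_{2,\Omega}^2,
\]
which controls $\|D^\alpha v\|_{2,\Omega}$ for all multi-indices $\alpha$ with $2\leq |\alpha|\leq 2s$.

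It then remains to bound the contributions from $|\alpha|\in\{0,1\}$. For these I apply Proposition \ref{pro:equiv-norm-1} directly to $v$ to get $\|v\|_{H^2(\Omega)}^2\leq C\|\Delta v\|_{2,\Omega}^2$, and then I estimate $\|\Delta v\|_{2,\Omega}^2\leq\|\Delta v\|_{H^{2s-2}(\Omega)}^2$ followed by the inductive hypothesis applied to $\Delta v\in C_0^\infty(\Omega)$, which yields $\|\Delta v\|_{H^{2s-2}(\Omega)}^2\leq C_{s-1}\|\Delta^s v\|_{2,\Omega}^2$. Combining these three estimates produces a single constant $C_s$ so that $\|v\|_{H^{2s}(\Omega)}^2\leq C_s\|\Delta^s v\|_{2,\Omega}^2$.

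The only technical point that I expect to be mildly delicate is the bookkeeping: one must verify that after summing the order-$\leq 2s-2$ derivatives of the $D^2_{ij}v$, together with the order-$\leq 2$ estimate on $v$, every multi-index with $|\alpha|\leq 2s$ appears at least once (and only finitely many times), so that $\|v\|_{H^{2s}(\Omega)}^2$ is genuinely dominated. No new analytic tools beyond Propositions \ref{pro:by-parts} and \ref{pro:equiv-norm-1} are needed, and the whole argument crucially depends on the preservation of compact support under differentiation, which is why the statement is restricted to $v\in C_0^\infty(\Omega)$.
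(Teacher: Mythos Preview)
Your proposal is correct and follows essentially the same approach as the paper: induction on $s=m/2$ with the base case taken from Proposition~\ref{pro:equiv-norm-1}, and the inductive step driven by the integration-by-parts identity of Proposition~\ref{pro:by-parts}. The only organizational difference is that the paper applies the induction hypothesis to $g=\Delta v$ and then uses Proposition~\ref{pro:by-parts} on each $F_\alpha=D^\alpha v$ with $|\alpha|=2s-2$ to recover the top-order derivatives, whereas you apply Proposition~\ref{pro:by-parts} to $\Delta^{s-1}v$ first and then invoke the induction hypothesis on each $D^2_{ij}v$; your treatment of the $|\alpha|\le 1$ terms is also more explicit than the paper's terse ``gathering'' step.
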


\begin{proof}
The first inequality immediately follows from the definition of the norm in $H^{m}(\Omega)$.
To prove the second one we argue by induction. For $m=2$ the required inequality coincides with \eqref{eq:equiv-norm-2}. Assume that $\|v\|_{H^{2k}(\Omega)}\leq C\|\Delta^{k}v\|_{2,\Omega}$ for some $k>1$. Set $g=\Delta v$. By the induction conjecture
\[
\|g\|_{H^{2k}(\Omega)}\leq C\|\Delta^k g\|_{2,\Omega}=C\|\Delta^{k+1}v\|_{2,\Omega}.
\]
Set $F_\alpha=D^\alpha v$, $|\alpha|=2k$. Since $F_\alpha \in C_0^\infty(\Omega)$, it follows from Proposition \ref{pro:equiv-norm-1} that
\[
\sum_{|\alpha|=2k}
\|D^{\alpha}g\|^2_{2,\Omega}=
\sum_{|\alpha|=2k}
\|\Delta F_\alpha\|^2_{2,\Omega}=\sum_{|\alpha|=2k}\sum_{i,j=1}^{N}\|D^{2}_{ij}F_\alpha\|^2_{2,\Omega}= \sum_{|\alpha|=2(k+1)}\|D^{\alpha}v\|_{2,\Omega}^2.
\]
Gathering the last two lines and using the induction conjecture we conclude that

\[
\|v\|_{H^{2(k+1)}(\Omega)}\leq C\|\Delta^{k+1}v\|_{2,\Omega}.
\]
\end{proof}

%\textcolor{blue}{
\begin{cor}
\label{cor:eigen-norm}
By virtue of \eqref{eq:mixed-eigenfunctions-even}, \eqref{eq:mixed-eigenfunctions-odd} the assertion of Proposition \ref{pro:1} is true for the eigenfunctions of problem \eqref{eq:eigen-Dir}: there is a constant $C=C(a,N,m)$ such that
\[
\|\psi_{\mathbf k}\|^2_{H^{2m}(\Omega)}=\sum_{0\leq |\alpha|\leq 2m}\|D^{\alpha}\psi_{\mathbf k}\|^2_{2,\Omega}\leq C\sum_{s=0}^m\|\Delta^s \psi_{\mathbf{k}}\|^{2}_{2,\Omega}\leq C\|\psi_{\mathbf k}\|^2_{H^{2m}(\Omega)}.
\]
\end{cor}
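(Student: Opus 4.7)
The plan is to derive both inequalities directly from the pointwise estimates \eqref{eq:mixed-eigenfunctions-even} and \eqref{eq:mixed-eigenfunctions-odd}, rather than appealing to Proposition \ref{pro:1}. The reason Proposition \ref{pro:1} cannot be applied verbatim is that $\psi_{\mathbf k}\notin C_0^\infty(\Omega)$: the normal derivatives of $\psi_{\mathbf k}$ do not vanish on $\Gamma$, so the boundary integrals appearing in the integration-by-parts identity of Proposition \ref{pro:by-parts} no longer drop out. On the other hand, the explicit form \eqref{eq:phi-explicit} has already supplied substitutes through \eqref{eq:mixed-eigenfunctions-even} and \eqref{eq:mixed-eigenfunctions-odd}.

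For the left inequality I would partition $\{\alpha:0\leq|\alpha|\leq 2m\}$ by the value $j=|\alpha|$ and treat even and odd $j$ separately. If $j=2s$ with $s\in\{0,\ldots,m\}$, estimate \eqref{eq:mixed-eigenfunctions-even} (applied with $m$ replaced by $s$) yields $\|D^\alpha\psi_{\mathbf k}\|^2_{2,\Omega}\leq C\|\Delta^s\psi_{\mathbf k}\|^2_{2,\Omega}$; if $j=2s-1$ with $s\in\{1,\ldots,m\}$, the same bound follows from \eqref{eq:mixed-eigenfunctions-odd}. The cardinality of $\{\alpha:|\alpha|=j\}$ is bounded by a constant depending only on $N$ and $m$, so summing over $\alpha$ and $j$ produces
\[
\sum_{0\leq |\alpha|\leq 2m}\|D^\alpha \psi_{\mathbf k}\|^2_{2,\Omega}\leq C(a_\pm,N,m)\sum_{s=0}^{m}\|\Delta^s\psi_{\mathbf k}\|^2_{2,\Omega}.
\]

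For the right inequality I would expand $\Delta^s$ by the multinomial formula,
\[
\Delta^s v=\sum_{\beta_1+\cdots+\beta_N=s}\frac{s!}{\beta_1!\cdots\beta_N!}\,D_1^{2\beta_1}\cdots D_N^{2\beta_N}v,
\]
and apply the triangle and Cauchy--Schwarz inequalities to obtain $\|\Delta^s\psi_{\mathbf k}\|^2_{2,\Omega}\leq C(N,s)\sum_{|\alpha|=2s}\|D^\alpha\psi_{\mathbf k}\|^2_{2,\Omega}$ for every $s\leq m$. Summing over $s=0,\ldots,m$ bounds the middle sum by the total $H^{2m}$-norm squared, as every contributing derivative has order $2s\leq 2m$.

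The main point is that there is no genuine obstacle. The delicate step, namely exploiting the product form of $\psi_{\mathbf k}$ to compare individual mixed-derivative $L^2$ norms with powers of the Laplacian, has already been carried out in \eqref{eq:mixed-eigenfunctions-even}--\eqref{eq:mixed-eigenfunctions-odd}. The corollary is then a bookkeeping consequence, whose later role is to supply, for the eigenfunction basis on $K_{\vec a}$, the same equivalence of norms that Proposition \ref{pro:1} provides on $C_0^\infty(\Omega)$, despite the failure of the boundary conditions upon which that proposition rests.
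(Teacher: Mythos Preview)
Your proposal is correct and follows exactly the route the paper intends: the corollary is stated without a separate proof, as an immediate consequence of \eqref{eq:mixed-eigenfunctions-even} and \eqref{eq:mixed-eigenfunctions-odd}, and you have simply written out the bookkeeping. Your observation that Proposition~\ref{pro:1} cannot be invoked directly because $\psi_{\mathbf k}\notin C_0^\infty(\Omega)$ is precisely the reason the explicit computations \eqref{eq:mixed-eigenfunctions-even}--\eqref{eq:mixed-eigenfunctions-odd} were carried out beforehand.
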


Given a function $f\in L^2(\Omega)$, let

\[
f^{(k)}=\sum_{i=1}^kf_i\psi_i,\qquad f_i=(f,\psi_i)_{2,\Omega},
\]
denote the partial sum of the Fourier series of $f$ in the basis $\{\psi_i\}$, $f^{(k)}\to f$ in $L^2(\Omega)$.

\begin{prop}
\label{epro:Fourier-m}
Let $m$ be a positive even integer. For every $f\in C_0^{\infty}(\Omega)$ and $\epsilon>0$ there is $l_0 \in \mathbb{N}$ such that

\[
\|f-f^{(l)}\|_{H^m(\Omega)}<\epsilon \quad \text{for all} \ l \geq l_0.
\]
\end{prop}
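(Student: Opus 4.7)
The plan is to show that $\{f^{(l)}\}$ is a Cauchy sequence in $H^m(\Omega)$; since it already converges to $f$ in $L^2(\Omega)$, any $H^m$-limit must coincide with $f$. The difficulty with a direct application of Proposition \ref{pro:1} to $f-f^{(l)}$ is that the tail is not in $C_0^\infty(\Omega)$, so I will instead work with the increments $f^{(l)}-f^{(k)}$, which are finite linear combinations of eigenfunctions and whose norms can be computed term by term via orthogonality.

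First I would exploit the compact support of $f$. Integrating by parts $m$ times (legitimate since $f\in C_0^\infty(\Omega)$ kills every boundary term) and using $\Delta^{m/2}\psi_i=(-\lambda_i)^{m/2}\psi_i$, I obtain
\[
\int_\Omega \Delta^{m/2}f\cdot\psi_i\,dx=\int_\Omega f\cdot\Delta^{m/2}\psi_i\,dx=(-\lambda_i)^{m/2}f_i.
\]
Since $\Delta^{m/2}f\in L^2(\Omega)$, Parseval's identity in the orthonormal basis $\{\psi_i\}$ yields
\[
\sum_{i=1}^\infty \lambda_i^{m}f_i^{\,2}=\|\Delta^{m/2}f\|_{2,\Omega}^{2}<\infty.
\]

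Second, I would establish orthogonality of the families of derivatives. From the explicit formula \eqref{eq:phi-explicit}, each derivative $D^\alpha\psi_{\mathbf k}$ equals, up to a sign and the factor $\prod_i(\pi k_i/a_i)^{\alpha_i}$, a tensor product of $\sin(\pi k_i x_i/a_i)$ or $\cos(\pi k_i x_i/a_i)$, with the choice in each coordinate determined by the parity of $\alpha_i$. Hence for any fixed $\alpha$ and any two distinct multi-indices $\mathbf k\neq\mathbf k'$, the integral $\int_\Omega D^\alpha\psi_{\mathbf k}\,D^\alpha\psi_{\mathbf k'}\,dx$ factorises into a product of one-dimensional integrals, at least one of which vanishes by the classical orthogonality of the trigonometric system on $(-a_i,a_i)$. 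Therefore $\{D^\alpha\psi_i\}_{i\geq 1}$ is $L^2(\Omega)$-orthogonal for every fixed $\alpha$.

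Third, for $l>k$ the orthogonality just proved gives
\[
\|D^\alpha(f^{(l)}-f^{(k)})\|_{2,\Omega}^{2}=\sum_{i=k+1}^{l}f_i^{\,2}\|D^\alpha\psi_i\|_{2,\Omega}^{2}.
\]
Summing over $|\alpha|\leq m$ and applying Corollary \ref{cor:eigen-norm} with the integer $m/2$ in place of $m$, together with $\|\Delta^s\psi_i\|_{2,\Omega}^{2}=\lambda_i^{2s}$, I get
\[
\sum_{|\alpha|\leq m}\|D^\alpha\psi_i\|_{2,\Omega}^{2}\leq C\sum_{s=0}^{m/2}\lambda_i^{2s}\leq C'(1+\lambda_i^{m}),
\]
so that
\[
\|f^{(l)}-f^{(k)}\|_{H^m(\Omega)}^{2}\leq C'\sum_{i=k+1}^{l}(1+\lambda_i^{m})f_i^{\,2}\xrightarrow[k,l\to\infty]{}0
\]
by Step~1 and the fact that $\sum f_i^{\,2}=\|f\|_{2,\Omega}^{2}<\infty$. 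Thus $\{f^{(l)}\}$ is Cauchy in $H^m(\Omega)$, its limit in $H^m(\Omega)$ coincides with its $L^2$-limit $f$, and the conclusion follows.

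The only delicate point is the interchange between the Laplacian powers and the inner product with $\psi_i$ in Step~1; once the compact support of $f$ is exploited this becomes routine. The rest is an orthogonality-based termwise estimate, specific to the product-of-sines basis of the rectangular domain.
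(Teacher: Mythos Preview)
Your proof is correct and follows essentially the same route as the paper's: both establish $\sum_i \lambda_i^m f_i^2<\infty$ by moving $\Delta^{m/2}$ onto $\psi_i$ via integration by parts, and then invoke Corollary~\ref{cor:eigen-norm} to control $\|f^{(l)}-f^{(k)}\|_{H^m(\Omega)}$ by the tail of that series. Your explicit verification of the $L^2$-orthogonality of $\{D^\alpha\psi_i\}_i$ for each fixed $\alpha$ makes rigorous the passage from the single-eigenfunction estimate of Corollary~\ref{cor:eigen-norm} to linear combinations, a step the paper's proof leaves implicit.
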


\begin{proof}
Set $m=2k$ and denote by $F_i$ and $f_i$ the Fourier coefficients of the functions $\Delta^{k}f$ and $f$ in the basis $\{\psi_i\}$ of $L^2(\Omega)$. The Fourier coefficients of $\Delta^{k}f$ are defined by

\[
\begin{split}
F_i & =(\Delta^{k}f,\psi_i)_{2,\Omega}=-(\nabla(\Delta^{k-1}f),\nabla \psi_i)_{2,\Omega}=-\lambda_i(\Delta^{k-1}f,\psi_i)_{2,\Omega}
\\
& = \ldots = (-1)^k\lambda^k_i (f,\psi_i)_{2,\Omega}=(-1)^k \lambda_i^kf_i,\qquad \forall i\in \mathbb{N}.
\end{split}
\]
Since $f\in C_0^{\infty}(\Omega)$, it follows from Proposition \ref{pro:1} that there exists a constant $C$ such that

\[
\frac{1}{C}\|f\|^2_{H^m(\Omega)}\leq \|\Delta^k f\|^2_{2,\Omega}=\sum_{i=1}^\infty F_i^2=\sum_{i=1}^\infty \lambda_i^{2k} f_i^2=\sum_{i=1}^\infty \lambda_i^{m} f_i^2<\infty.
\]
The convergence of this series means that the sequence of partial sums $\{f^{(s)}\}$ is a Cauchy sequence in $H^m(\Omega)$. Since $f^{(r)}$ are linear conbinations of $\{\psi_i\}_{i=1}^{s}$, it follows from Corollary \ref{cor:eigen-norm} that for $l<s$ and $m=2k$

\[
\|f^{(s)}-f^{(l)}\|_{H^{2k}(\Omega)}\leq C\sum_{i=l+1}^{s}\lambda_i^mf_i^2\to 0\quad \text{as $l\to \infty$}.
\]
Hence, $f^{(s)}\to f$ in $H^m(\Omega)$.
\end{proof}

\begin{lemma}
\label{le:density-stationary}
The system of eigenfunctions $\{\psi_i\}$ is dense in $W^{1,\vec{p}(\cdot)}_0(\Omega)$.
\end{lemma}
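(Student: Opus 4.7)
The plan is to reduce the density question to approximating smooth compactly supported functions and then to apply Proposition \ref{epro:Fourier-m} in combination with a Sobolev embedding of sufficiently high order. By the definition \eqref{eq:main-space-1}, $C_0^\infty(\Omega)$ is dense in $W_0^{1,\vec p(\cdot)}(\Omega)$, so I only need to show that every $f\in C_0^\infty(\Omega)$ can be approximated by finite linear combinations of $\{\psi_i\}$ in the norm $\sum_{i=1}^N\|D_iu\|_{p_i(\cdot),\Omega}$.

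Fix $f\in C_0^\infty(\Omega)$. I would pick an even integer $m$ with $m>1+\tfrac{N}{2}$; the classical Sobolev embedding then yields $H^m(\Omega)\hookrightarrow C^1(\overline\Omega)$. By Proposition \ref{epro:Fourier-m} the partial Fourier sums $f^{(l)}$ satisfy $f^{(l)}\to f$ in $H^m(\Omega)$, so that $f^{(l)}$ together with each of its first-order partial derivatives converges uniformly on $\overline\Omega$ to the corresponding object for $f$. Since $\Omega$ is bounded and, by \eqref{eq:p-i-1}, each $p_i$ ranges in a compact subinterval of $(1,\infty)$, the modular-norm comparison \eqref{eq:norm-mod} provides a continuous inclusion $L^\infty(\Omega)\hookrightarrow L^{p_i(\cdot)}(\Omega)$. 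Hence
\[
\sum_{i=1}^N\|D_i(f-f^{(l)})\|_{p_i(\cdot),\Omega}\longrightarrow 0 \quad \text{as $l\to\infty$,}
\]
which is exactly convergence of $f^{(l)}$ to $f$ in the norm of $W_0^{1,\vec p(\cdot)}(\Omega)$.

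It remains to verify that each $f^{(l)}$ itself belongs to $W_0^{1,\vec p(\cdot)}(\Omega)$; this is where the only real subtlety lies. From the explicit formula \eqref{eq:phi-explicit}, every $\psi_{\mathbf k}$ is a product of sines, hence smooth on $\overline\Omega$ and identically zero on all of $\partial\Omega$. Multiplying by a sequence of cutoffs $\eta_j\in C_0^\infty(\Omega)$ with $\eta_j\to 1$ and running the standard $W^{1,q}$-approximation argument gives $\psi_{\mathbf k}\in W_0^{1,q}(\Omega)$ for every finite $q$, in particular for some $q\geq p^+$; the boundedness of $\Omega$ then yields the continuous inclusion $W_0^{1,q}(\Omega)\hookrightarrow W_0^{1,\vec p(\cdot)}(\Omega)$, so $\psi_{\mathbf k}\in W_0^{1,\vec p(\cdot)}(\Omega)$ and every finite linear combination stays in this space. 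Once this zero-trace verification is in hand, the whole plan collapses to the routine chain of inclusions $H^m(\Omega)\hookrightarrow W^{1,\infty}(\Omega)\hookrightarrow W^{1,\vec p(\cdot)}(\Omega)$, which is why I expect the zero-boundary check for the $\psi_{\mathbf k}$ to be the main, though still mild, obstacle.
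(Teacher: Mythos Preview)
Your argument is correct and follows essentially the same route as the paper: reduce to $C_0^\infty(\Omega)$ by definition of $W_0^{1,\vec p(\cdot)}(\Omega)$, invoke Proposition~\ref{epro:Fourier-m} to get $H^m$-convergence of the Fourier partial sums, and pass to the anisotropic norm via a Sobolev embedding. The only cosmetic difference is that the paper goes through the smaller-order embedding $H^m\hookrightarrow W^{1,p^+}$ with $m\geq 1+N\bigl(\tfrac12-\tfrac1{p^+}\bigr)$, whereas you take a larger even $m>1+\tfrac N2$ and route through $C^1(\overline\Omega)$; your extra check that each $\psi_{\mathbf k}$ actually lies in $W_0^{1,\vec p(\cdot)}(\Omega)$ is a point the paper leaves implicit.
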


\begin{proof}
Take a function $v\in W_0^{1,\vec{p}(\cdot)}(\Omega)$ and fix an arbitrary $\epsilon>0$. By density of $C_0^\infty(\Omega)$ in $W^{1,\vec{p}(\cdot)}_0(\Omega)$, there exists $v_\epsilon\in C_0^{\infty}(\Omega)$ such that $\|v-v_\epsilon\|_{W_0^{1,\vec{p}(\cdot)}(\Omega)}<\epsilon$. By the definition of the norm in $W^{1,\vec{p}(\cdot)}_0(\Omega)$ and the generalized H\"older inequality, for every $w\in W^{1,\vec{p}(\cdot)}_0(\Omega)$

\[
\|w\|_{W_0^{1,\vec{p}(\cdot)}(\Omega)}=\sum_{i=1}^N\|D_iw\|_{p_i(\cdot),\Omega}\leq \sum_{i=1}^N
C_i(p_i^{\pm},|\Omega|)\|D_iw\|_{p^+,\Omega}\leq C\|\nabla w\|_{p^+,\Omega}.
\]
By the Sobolev embedding theorem, for every $w\in C_0^\infty(\Omega)$

\[
\|w\|_{W^{1,p^+}(\Omega)}\leq C \|\nabla w\|_{p^+,\Omega}\leq C'\|w\|_{H^{m}(\Omega)},
\]
where $p^+>\frac{2N}{N+2}$ and $m\geq 1+N\left(\frac{1}{2}-\frac{1}{p^+}\right)$ is an even integer. By Proposition \ref{epro:Fourier-m}, there is $k_0 \in \mathbb{N}$ such that $v_\epsilon^{(k)}=\sum_{i=1}^kv_{\epsilon i}\psi_i$ satisfies the inequality $\|v_\epsilon-v_{\epsilon}^{(k)}\|_{H^m(\Omega)}<\epsilon$, $\forall \ k \geq k_0$, and

\[
\|v-v_\epsilon^{(k)}\|_{W^{1,\vec{p}(\cdot)}_0(\Omega)}\leq \|v-v_\epsilon\|_{W^{1,\vec{p}(\cdot)}_0(\Omega)} +C'\|v_{\epsilon}-v_\epsilon^{(k)}\|_{H^m(\Omega)}<(1+C')\epsilon.
\]
\end{proof}

\subsection{Domains with smooth boundary}
If $\partial\Omega\in C^k$ with $k\geq 2$, we take for the basis of $L^2(\Omega)$ the set of eigenfunctions of the Dirichlet problem for the Laplace operator

\begin{equation}
\label{eq:eigen-smooth}
(\nabla \phi_i,\nabla \psi)_{2,\Omega}=\lambda_i(\nabla \phi_i,\psi)\qquad \forall \psi\in H^1_0(\Omega).
\end{equation}
It follows from the classical elliptic theory that $\psi_i\in H^{k}(\Omega)$. Define the closed subspace of $H^k(\Omega)$

\[
H_{\mathcal D}^k(\Omega)=\left\{u\in H^k(\Omega):\,\text{$\Delta^s u=0$ on $\partial\Omega\setminus \Gamma_0$},\,s=0,1,\ldots,\left[\frac{k-1}{2}\right]\right\},\qquad H_{\mathcal D}^0(\Omega)=L^2(\Omega).
\]
The relations
\[
[f,g]_{k}=\begin{cases}
(\Delta^{\frac{k}{2}}f,\Delta^{\frac{k}{2}}g)_{2,\Omega} & \text{if $k$ is even},
\\
(\Delta^{\frac{k-1}{2}}f,\Delta^{\frac{k-1}{2}}g)_{H^1(\Omega)} & \text{if $k$ is odd}
\end{cases}
\]
define an equivalent inner product on ${H}^{k}_{\mathcal{D}}(\Omega)$:
$[f,g]_k=\displaystyle\sum_{i=1}^\infty \lambda_i^k f_ig_i$,
where $f_i$, $g_i$ are the Fourier coefficients of $f$, $g$ in the basis $\{\phi_i\}$ of $L^2(\Omega)$. The corresponding equivalent norm of $H^k_{\mathcal{D}}(\Omega)$ is defined by $\|f\|^2_{{H}^{k}_{\mathcal{D}}(\Omega)}=[f,f]_k$. Let $f^{(m)}=\sum_{i=1}^{m}f_i\phi_i$ be the partial sums of the Fourier series of $f\in L^{2}(\Omega)$. The following assertion is well-known.
\begin{lemma}
\label{le:series}
Let $\partial\Omega\in C^k$, $k\geq 1$. A function $f$ can be represented by the Fourier series in the system $\{\phi_i\}$,
convergent in the norm of $H^k(\Omega)$, if and only if $f\in H^{k}_{\mathcal{D}}(\Omega)$. If $f\in H^{k}_{\mathcal{D}}(\Omega)$, then the series $\sum_{i=1}^{\infty}\lambda_i^kf_i^2$ is convergent, its sum is bounded by $C\|f\|_{H^k(\Omega)}$ with an independent of $f$ constant $C$, and $\|f^{(m)}-f\|_{H^k(\Omega)}\to 0$ as $m\to \infty$.
\end{lemma}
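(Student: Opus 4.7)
The plan is to reduce both directions to Parseval's identity in $L^2(\Omega)$ applied to iterated Laplacians of $f$, combined with the classical elliptic regularity (shift theorem) for the Dirichlet Laplacian on a $C^k$ domain.

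First I would dispense with the \emph{only if} direction. Each eigenfunction $\phi_i$ belongs to $H^k_{\mathcal D}(\Omega)$ because $\Delta^s\phi_i=(-\lambda_i)^s\phi_i$ inherits the vanishing on $\partial\Omega\setminus\Gamma_0$ from $\phi_i$ itself. Hence every partial sum $f^{(m)}$ lies in $H^k_{\mathcal D}(\Omega)$. Since the conditions $\Delta^s u=0$ on $\partial\Omega\setminus\Gamma_0$, $0\le s\le\left[\frac{k-1}{2}\right]$, carve out a closed subspace of $H^k(\Omega)$ via continuity of the composition $u\mapsto \Delta^s u\mapsto (\Delta^s u)|_{\partial\Omega}$ from $H^k(\Omega)$ into $L^2(\partial\Omega\setminus\Gamma_0)$ (well-defined as soon as $k-2s\ge 1$), any $H^k$-limit of the $f^{(m)}$ automatically lies in $H^k_{\mathcal D}(\Omega)$.

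For the converse, fix $f\in H^k_{\mathcal D}(\Omega)$ and consider first the even case $k=2m$. I would iterate Green's second identity: for $s=0,1,\ldots,m-1$,
\[
(\Delta^{s+1} f,\phi_i)_{2,\Omega}-(\Delta^{s}f,\Delta\phi_i)_{2,\Omega}=\int_{\partial\Omega\setminus\Gamma_0}\bigl(\phi_i\,\partial_\nu(\Delta^s f)-\Delta^s f\,\partial_\nu\phi_i\bigr)\,dS=0,
\]
the boundary integral vanishing because $\phi_i=0$ and $\Delta^s f=0$ on $\partial\Omega\setminus\Gamma_0$. Combined with $\Delta\phi_i=-\lambda_i\phi_i$, this telescopes to the key identity $(\Delta^m f,\phi_i)_{2,\Omega}=(-\lambda_i)^m f_i$. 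Since $\Delta^m f\in L^2(\Omega)$, Parseval yields $\sum_i\lambda_i^{k}f_i^2=\|\Delta^m f\|_{2,\Omega}^2<\infty$, and the classical shift theorem for the Dirichlet Laplacian on a $C^k$-domain provides the two-sided equivalence $\|\Delta^m f\|_{2,\Omega}\sim\|f\|_{H^k(\Omega)}$ on $H^k_{\mathcal D}(\Omega)$, which delivers the stated bound. For odd $k=2m+1$ the same identity combines with the orthogonality relation $(\nabla\phi_i,\nabla\phi_j)_{2,\Omega}=\lambda_i\delta_{ij}$ to give $\|\nabla\Delta^m f\|_{2,\Omega}^2=\sum_i\lambda_i^{k}f_i^2$, with the analogous equivalence $\|\Delta^m f\|_{H^1(\Omega)}\sim\|f\|_{H^k(\Omega)}$.

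Finally the convergence $\|f^{(r)}-f\|_{H^k(\Omega)}\to 0$ will follow by applying the same identity to the tail: $\|\Delta^m(f-f^{(r)})\|_{2,\Omega}^2=\sum_{i>r}\lambda_i^k f_i^2\to 0$, and invoking the equivalent-norm bound to transfer this to $H^k$-convergence. The hard part is the shift-theorem step -- upgrading control of $\|\Delta^{k/2} f\|_{2,\Omega}$ (or its $H^1$-analog for odd $k$) to full control of $\|f\|_{H^k(\Omega)}$ on $H^k_{\mathcal D}(\Omega)$ -- which relies on the classical regularity theory for the poly-harmonic Dirichlet problem and is precisely why the hypothesis $\partial\Omega\in C^k$ is imposed.
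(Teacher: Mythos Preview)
The paper does not supply a proof of this lemma: it is introduced with the sentence ``The following assertion is well-known'' and stated without argument. Your proposal is a correct and standard way to establish it, resting on exactly the ingredients one expects---iterated Green's identities to obtain $(\Delta^m f,\phi_i)_{2,\Omega}=(-\lambda_i)^m f_i$, Parseval in $L^2(\Omega)$, and the elliptic shift theorem to pass between $\|\Delta^{m}f\|_{2,\Omega}$ (resp.\ $\|\Delta^m f\|_{H^1(\Omega)}$) and the full $H^k$-norm on the subspace $H^k_{\mathcal D}(\Omega)$. The paper's definition of the inner product $[f,g]_k$ and the remark that it is equivalent to the $H^k$ inner product on $H^k_{\mathcal D}(\Omega)$ is precisely the shift-theorem fact you invoke, so your sketch is fully aligned with what the authors have in mind; there is simply nothing in the paper to compare against beyond that.
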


Let $u\in W_0^{1,\vec p(\cdot)}(\Omega)$ and $\epsilon>0$ be an arbitrary number. By Proposition \ref{pro:Fan-1} the set $C_0^\infty(\Omega)$ is dense in $W_0^{1,\vec p(\cdot)}(\Omega)$, therefore there exists $v_\epsilon\in C_0^{\infty}(\Omega)\subset H_{\mathcal{D}}^{k}(\Omega)$ such that $\|u-v_\epsilon\|_{W^{1,\vec p(\cdot)}_0(\Omega)}<\epsilon$. By Lemma \ref{le:series} one may find $m\in \mathbb{N}$ and $w_{\epsilon}\in \operatorname{span}\{\phi_1,\ldots,\psi_m\}$ such that $\|v_\epsilon-w_m\|_{H^k(\Omega)}<\epsilon$. Following the proof of Lemma \ref{le:density-stationary} we arrive at the following assertion.

\begin{lemma}
\label{le:density-smooth-boundary}
Set $\mathcal{P}_m=\operatorname{span}\{\phi_1,\ldots,\phi_m\}$. If $\partial\Omega\in C^k$ with $k\geq 1+N\left(\frac{1}{2}-\frac{1}{p^+}\right)$, and $p_i\in C_{\rm log}(\Omega)$, then $\cup_{m=1}^{\infty}\mathcal{P}_m$ is dense in $W^{1,\vec p(\cdot)}_0(\Omega)$.
\end{lemma}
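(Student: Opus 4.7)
The plan is to mimic, almost verbatim, the proof of Lemma~\ref{le:density-stationary}, substituting the eigenbasis on the rectangle with the Dirichlet eigenbasis $\{\phi_i\}$ on the smooth domain and replacing the explicit computation based on Proposition~\ref{epro:Fourier-m} with the abstract statement of Lemma~\ref{le:series}. The three ingredients I need are: (a) density of $C_0^\infty(\Omega)$ in $W_0^{1,\vec p(\cdot)}(\Omega)$; (b) approximation of smooth compactly supported functions by partial sums of their Fourier series in $\{\phi_i\}$ in a high-order Sobolev norm; (c) a continuous embedding that controls the anisotropic norm by a classical Sobolev norm of sufficiently high order.

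First, fix $u\in W_0^{1,\vec p(\cdot)}(\Omega)$ and $\epsilon>0$. Since $\partial\Omega\in C^k$ is Lipschitz and $p_i\in C_{\log}(\overline\Omega)$, Proposition~\ref{pro:Fan-1} gives $v_\epsilon\in C_0^\infty(\Omega)$ with $\|u-v_\epsilon\|_{W_0^{1,\vec p(\cdot)}(\Omega)}<\epsilon$. Because $v_\epsilon$ together with all its derivatives vanishes on $\partial\Omega$, the compatibility conditions $\Delta^s v_\epsilon=0$ on $\partial\Omega\setminus\Gamma_0$ are satisfied for every $s$, so $v_\epsilon\in H^k_{\mathcal D}(\Omega)$. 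Lemma~\ref{le:series} then yields an $m_0\in\mathbb N$ such that the partial sum $w_m=v_\epsilon^{(m)}\in\mathcal P_m$ satisfies $\|v_\epsilon-w_m\|_{H^k(\Omega)}<\epsilon$ for all $m\geq m_0$.

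Next, to pass from $H^k$ control to anisotropic control, I argue as in the rectangular case. Since $\Omega$ is bounded and $p_i\leq p^+$, H\"older's inequality in variable Lebesgue spaces gives
\[
\|w\|_{W_0^{1,\vec p(\cdot)}(\Omega)}=\sum_{i=1}^N\|D_iw\|_{p_i(\cdot),\Omega}\leq C(p_i^\pm,|\Omega|)\,\|\nabla w\|_{p^+,\Omega}
\]
for every $w\in W_0^{1,\vec p(\cdot)}(\Omega)\cap W^{1,p^+}(\Omega)$. The condition $p^+>\tfrac{2N}{N+2}$ built into \eqref{eq:p-i-1} together with the assumption $k\geq 1+N\bigl(\tfrac12-\tfrac1{p^+}\bigr)$ ensures that the Sobolev embedding $H^k(\Omega)\hookrightarrow W^{1,p^+}(\Omega)$ holds (by interpolation between $H^1$ and $H^k$, or directly from $H^{k-1}\hookrightarrow L^{p^+}$). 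Consequently
\[
\|v_\epsilon-w_m\|_{W_0^{1,\vec p(\cdot)}(\Omega)}\leq C'\|v_\epsilon-w_m\|_{H^k(\Omega)}<C'\epsilon,
\]
and the triangle inequality closes the argument: $\|u-w_m\|_{W_0^{1,\vec p(\cdot)}(\Omega)}<(1+C')\epsilon$.

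The only delicate point is verifying the Sobolev embedding and, implicitly, that $v_\epsilon\in H^k_{\mathcal D}(\Omega)$ so that Lemma~\ref{le:series} applies; both are automatic here (the first because $k$ is chosen precisely large enough by hypothesis, the second because $v_\epsilon\in C_0^\infty(\Omega)$ vanishes to infinite order on $\partial\Omega$). No smoothness of the exponents $p_i$ beyond $C_{\log}$ is needed, since $v_\epsilon$ and its approximants $w_m$ are smooth inside $\Omega$ and the final estimate is made entirely in the anisotropic norm. Thus the only genuine obstacle—the incompatibility between the flat Sobolev regularity that the eigenbasis delivers and the anisotropic regularity one wants—is bypassed exactly as in Lemma~\ref{le:density-stationary} by brute force: asking $k$ large enough to dominate $W^{1,p^+}$ uniformly in the directions.
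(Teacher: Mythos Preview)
Your proof is correct and follows essentially the same approach as the paper's: density of $C_0^\infty(\Omega)$ in $W_0^{1,\vec p(\cdot)}(\Omega)$ via Proposition~\ref{pro:Fan-1}, the inclusion $C_0^\infty(\Omega)\subset H^k_{\mathcal D}(\Omega)$, approximation in $H^k$ by partial Fourier sums via Lemma~\ref{le:series}, and the chain of embeddings $W_0^{1,\vec p(\cdot)}\hookleftarrow W^{1,p^+}\hookleftarrow H^k$ exactly as in Lemma~\ref{le:density-stationary}. The paper in fact gives only a sketch and refers back to that lemma, so your write-up is more explicit but not different in substance.
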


\subsection{Spaces of functions depending on $z=(x,t)$}
Let $\vec p:\,\Omega\times (0,T)=Q_T\mapsto \mathbb{R}^N$ be a vector-valued function,

\[
\frac{2N}{N+2}<p_i(z)<p_h^\ast(z),\qquad p_i \in C^{0,1}(\overline{Q}_T).
\]
The space $\mathbb{W}(Q_T)$ defined in \eqref{eq:space-parab} is the closure of $C^\infty([0,T];C_0^\infty(\Omega))$ in the norm of $\mathbb{W}(Q_T)$. Let

\[
\mathcal{S}_m=\left\{u:\,u=\sum_{k=1}^m d_k(t)\psi_k(x),\,d_k\in C^{0,1}[0,T]\right\}.
\]
Then, $\bigcup_{m\geq 1}\mathcal{S}_m$ is dense in $\mathbb{W}(Q_T)$ (see \cite[Lemma 1.17]{ant-shm-book-2015}).

\subsection{The interpolation inequality}
Let $\vec p\in \mathbb{R}^N$ be a given constant vector such that
\begin{equation}
\label{eq:p-const-1}
\dfrac{2N}{N+2}<p^\wedge\leq p^\vee < p_h^\ast.
\end{equation}

\begin{prop}[Lemma 2.1, \cite{Acerbi-Fusco-1994}]
\label{pro:Acerbi-Fusco-1994}
Let $K_{\vec a}$ be a rectangular domain. If $\vec p$ satisfies condition \eqref{eq:p-const-1}, there exists a constant $C$ such that for every $u\in W^{1,(1,\vec p)}(K_{\vec a})$

\begin{equation}
\label{eq:A-F-1994}
\|u\|_{p_h^\ast,K_{\vec a}}\leq C\left(\sum_{i=1}^N\|D_iu\|_{p_i,K_{\vec a}}+\|u\|_{1,K_{\vec a}}\right).
\end{equation}
\end{prop}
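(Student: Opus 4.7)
The plan is to reduce \eqref{eq:A-F-1994} to the classical Troisi-type anisotropic Sobolev inequality on $\mathbb{R}^N$ via a reflection-extension argument that exploits the rectangular geometry of $K_{\vec a}$. The starting point is Troisi's inequality: for any $w\in C_0^\infty(\mathbb{R}^N)$ with $\vec p$ satisfying \eqref{eq:p-const-1},
\[
\|w\|_{p_h^\ast,\mathbb{R}^N}\leq C\prod_{i=1}^N\|D_iw\|_{p_i,\mathbb{R}^N}^{1/N}\leq \frac{C}{N}\sum_{i=1}^N\|D_iw\|_{p_i,\mathbb{R}^N},
\]
the first estimate following from the standard slicing proof (one-dimensional representation $|w(x)|\leq\int_{\mathbb{R}}|D_iw|\,dx_i$ in each coordinate direction, geometric mean across $i$, iterated H\"older inequalities), and the second from AM--GM. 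This handles the compactly supported case on the whole space.

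To bring $K_{\vec a}$ into the picture I would extend $u\in W^{1,(1,\vec p)}(K_{\vec a})$ to a strictly larger box $K_{\vec b}\supset \overline{K_{\vec a}}$ by successive even reflections across the faces $\{x_i=\pm a_i\}$. Reflection across such a face only flips the sign of $D_iu$ and leaves $D_ju$, $j\neq i$, unchanged, so the extension $\tilde u$ preserves each anisotropic norm up to a multiplicative constant:
\[
\|\tilde u\|_{1,K_{\vec b}}\leq C\|u\|_{1,K_{\vec a}},\qquad \|D_i\tilde u\|_{p_i,K_{\vec b}}\leq C\|D_iu\|_{p_i,K_{\vec a}}.
\]
Multiplying $\tilde u$ by a cutoff $\eta\in C_0^\infty(K_{\vec b})$ with $\eta\equiv 1$ on $K_{\vec a}$ and bounded derivatives, and applying the Troisi inequality of the previous paragraph to $\eta\tilde u$ (extended by zero to $\mathbb{R}^N$), yields
\[
\|u\|_{p_h^\ast,K_{\vec a}}\leq \|\eta\tilde u\|_{p_h^\ast,\mathbb{R}^N}\leq C\sum_{i=1}^N\left(\|D_iu\|_{p_i,K_{\vec a}}+\|u\|_{p_i,K_{\vec a}}\right).
\]

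The main obstacle lies in replacing the cutoff-induced $L^{p_i}$ norm on the right-hand side by the $L^1$ norm appearing in \eqref{eq:A-F-1994}. To this end I would use that \eqref{eq:p-const-1} forces $1<p_i<p_h^\ast$ for every $i$, so that Lebesgue interpolation furnishes $\theta_i\in(0,1)$ with
\[
\|u\|_{p_i,K_{\vec a}}\leq \|u\|_{1,K_{\vec a}}^{\theta_i}\|u\|_{p_h^\ast,K_{\vec a}}^{1-\theta_i}.
\]
Young's inequality then gives $\|u\|_{p_i,K_{\vec a}}\leq \varepsilon\|u\|_{p_h^\ast,K_{\vec a}}+C_\varepsilon\|u\|_{1,K_{\vec a}}$, and choosing $\varepsilon$ sufficiently small lets me absorb the $\|u\|_{p_h^\ast,K_{\vec a}}$ contributions into the left-hand side of the previous display, producing \eqref{eq:A-F-1994}. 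A standard density argument based on Proposition \ref{pro:Fan-1}(2), which gives density of $C^\infty(\overline{K_{\vec a}})$ in $W^{1,(1,\vec p)}(K_{\vec a})$, finally extends the inequality from smooth functions to all of $W^{1,(1,\vec p)}(K_{\vec a})$.
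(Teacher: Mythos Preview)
The paper does not supply its own proof of this proposition; it is quoted directly from \cite{Acerbi-Fusco-1994} as a known result and used without further argument. There is therefore nothing in the paper to compare against.

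Your reconstruction is correct and is essentially the standard route to this inequality. The Troisi inequality on $\mathbb{R}^N$ handles the compactly supported case; the even-reflection extension is precisely the step where the rectangular geometry matters, since reflecting across a face $\{x_i=\pm a_i\}$ affects only $D_i$ and therefore preserves each anisotropic seminorm $\|D_j\cdot\|_{p_j}$ separately; the cutoff then reduces to the compactly supported case at the cost of lower-order terms $\|u\|_{p_i}$; and your interpolation--absorption step is legitimate because condition \eqref{eq:p-const-1} gives $1<p_i<p_h^\ast$ strictly for every $i$, so $\theta_i\in(0,1)$ and the $\varepsilon$-term can indeed be swallowed on the left. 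The final density appeal is also fine: for constant exponents, $W^{1,(1,\vec p)}(K_{\vec a})$ coincides with $W^{1,(p^\vee,\vec p)}(K_{\vec a})$ by Proposition~\ref{pro:equiv-spaces}(1), so Proposition~\ref{pro:Fan-1}(2) applies.
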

Inequality \eqref{eq:A-F-1994} remains valid if the norm $\|u\|_{1,K_{\vec a}}$ is substituted by $\|u\|_{2,K_{\vec a}}$
It is well-known
(see, e.g., \cite[p.133]{Bresis-book}) that for every constant $s$, $q$ satisfying $2<s<q$

\begin{equation}
\label{eq:inter-1}
\|u\|_{s}\leq \|u\|_{q}^\theta\|u\|_{2}^{1-\theta}\quad \text{with}\quad \dfrac{1}{s}=\dfrac{\theta}{q}+\dfrac{1-\theta}{2}.
\end{equation}
Gathering \eqref{eq:inter-1} with \eqref{eq:A-F-1994} we obtain the following interpolation inequality.

\begin{lemma}
\label{le:interpol-global}
Let $\vec p$ satisfy \eqref{eq:p-const-1}. There exists a constant $C=C(N,a)$ such that for $u\in W^{1,(2,\vec{p})}(K_{\vec a})$

\begin{equation}
\label{eq:interpol-global}
\|u\|_{s,K_{\vec a}}\leq \|u\|^{\theta}_{p_h^\ast,K_{\vec a}}\|u\|_{2,K_{\vec a}}^{1-\theta} \leq C\sum_{i=1}^N\left(\|D_i u\|_{p_i,K_{\vec a}}
+\|u\|_{2,K_{\vec a}}\right)^{\theta}\|u\|_{2,K_{\vec a}}^{1-\theta},
\end{equation}
for every $2<s< p_h^\ast$ and

\[
\theta=
\dfrac{\frac{1}{2}-\frac{1}{s}}{\frac{N+2}{2N}-\frac{1}{p_h}}\in (0,1).
\]
\end{lemma}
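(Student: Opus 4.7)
The plan is to combine the two ingredients stated just above the lemma in a straightforward way. First I would fix $s\in (2,p_h^\ast)$ and apply the standard interpolation inequality \eqref{eq:inter-1} with the choice $q=p_h^\ast$. Since \eqref{eq:p-const-1} forces $p_h^\ast>2$ (indeed $p_h\geq p^\wedge>\frac{2N}{N+2}$ implies $\frac{1}{p_h^\ast}=\frac{1}{p_h}-\frac{1}{N}<\frac{1}{2}$), the number $\theta$ defined by $\frac{1}{s}=\frac{\theta}{p_h^\ast}+\frac{1-\theta}{2}$ lies in $(0,1)$, and \eqref{eq:inter-1} yields the first inequality
\[
\|u\|_{s,K_{\vec a}}\leq \|u\|_{p_h^\ast,K_{\vec a}}^{\theta}\|u\|_{2,K_{\vec a}}^{1-\theta}.
\]

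Next I would invoke Proposition \ref{pro:Acerbi-Fusco-1994}, in the form with the $L^{1}$-norm replaced by the $L^{2}$-norm (permissible since $|K_{\vec a}|<\infty$, so $\|u\|_{1,K_{\vec a}}\leq |K_{\vec a}|^{1/2}\|u\|_{2,K_{\vec a}}$, and the hypothesis $u\in W^{1,(2,\vec p)}(K_{\vec a})\subset W^{1,(1,\vec p)}(K_{\vec a})$ is in force). This gives
\[
\|u\|_{p_h^\ast,K_{\vec a}}\leq C\Bigl(\sum_{i=1}^N\|D_iu\|_{p_i,K_{\vec a}}+\|u\|_{2,K_{\vec a}}\Bigr),
\]
with $C=C(N,\vec a,\vec p\,)$. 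Substituting this bound into the previous display produces the second inequality of \eqref{eq:interpol-global}.

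It remains to identify $\theta$ with the expression in the statement. Using $\frac{1}{p_h^\ast}=\frac{1}{p_h}-\frac{1}{N}$ one rewrites $\frac{1}{s}=\frac{\theta}{p_h^\ast}+\frac{1-\theta}{2}$ as
\[
\frac{1}{2}-\frac{1}{s}=\theta\Bigl(\frac{1}{2}-\frac{1}{p_h^\ast}\Bigr)=\theta\Bigl(\frac{N+2}{2N}-\frac{1}{p_h}\Bigr),
\]
and the denominator is strictly positive by the lower bound $p_h\geq p^\wedge>\frac{2N}{N+2}$ in \eqref{eq:p-const-1}; the upper bound $\theta<1$ is equivalent to $s<p_h^\ast$, which is part of the hypothesis. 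There is no real obstacle here—the lemma is essentially a one-line interpolation placed on top of an already quoted anisotropic Sobolev embedding, and the only care needed is the bookkeeping of the exponents and the harmless replacement of $L^1$ by $L^2$ on the bounded parallelepiped.
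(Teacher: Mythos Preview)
Your proof is correct and follows exactly the approach of the paper, which merely says ``Gathering \eqref{eq:inter-1} with \eqref{eq:A-F-1994} we obtain the following interpolation inequality'' without further detail. Your write-up is in fact more explicit than the paper's, supplying the verification that $\theta\in(0,1)$, the justification for replacing $\|u\|_{1}$ by $\|u\|_{2}$, and the algebra identifying $\theta$ via $\frac{1}{p_h^\ast}=\frac{1}{p_h}-\frac{1}{N}$.
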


The interpolation inequality \eqref{eq:interpol-global} can be adapted to the case of variable exponents $p_i$. Given a vector $\vec p(x)$ defined on $K_{\vec a}$, set

\begin{equation}
\label{eq:p_h-}
p_i^-=\min_{\overline{K}_{\vec a}}p_i(x), \qquad p_i^+=\max_{\overline{K}_{{\vec a}}}p_i(x),\qquad p_h^-=\dfrac{N}{\sum_{i=1}^N\frac{1}{p_i^-}}.
\end{equation}

\begin{lemma}
\label{le:interpol-var}
Assume that $p_i \in C^0(\overline{K}_{\vec a})$ for all $i=\overline{1, N}$, and
\[
\frac{2N}{N+2}
<p_i^-\leq p_i(x)\leq p_i^+ < (p_h^{-})^\ast=\begin{cases} \frac{Np^-_h}{N-p_h^-} & \text{if $p_h^-<N$}
\\
\text{any finite number} & \text{if $p_h^-\geq N$}.
\end{cases}
\]
Then there exists a constant $C=C(N,\vec a)$ such that for every $u\in W^{1,(2,\vec{p}(\cdot))}(K_{\vec a})$

\begin{equation}
\label{eq:interpol-global-var}
\begin{split}
\|u\|_{s(\cdot),K_{\vec a}} & \leq C\|u\|_{s^+,K_{\vec a}}\leq \|u\|^{\theta}_{p_h^\ast,K_{\vec a}}\|u\|_{2,K_{\vec a}}^{1-\theta}
\\
&
\leq C\sum_{i=1}^N\left(\|D_i u\|_{p^-_i,K_{\vec a}}+ \|u\|_{2,K_{\vec a}} \right)^{\theta}\|u\|_{2,K_{\vec a}}^{1-\theta},
\end{split}
\end{equation}
where $s\in C^0(\overline{K}_{\vec a})$,

\begin{equation}
\label{eq:r-crit}
2<s^+=\max_{\overline{K}_{\vec a}}s(x)< (p_h^{-})^\ast,\qquad
\theta=\dfrac{\frac{1}{2}-\frac{1}{s^+}}{\frac{N+2}{2N}-\frac{1}{p_h^-}}\in (0,1).
\end{equation}
\end{lemma}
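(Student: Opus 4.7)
My plan is to prove the chain of three inequalities separately, exploiting the fact that the variable-exponent setting can be reduced at each step either to a constant-exponent inequality or to a trivial embedding of variable Lebesgue spaces on a bounded domain.

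First I would handle the leftmost inequality $\|u\|_{s(\cdot),K_{\vec a}}\leq C\|u\|_{s^+,K_{\vec a}}$. Since $s(x)\leq s^+$ a.e.\ on the bounded set $K_{\vec a}$, the continuous embedding $L^{s^+}(K_{\vec a})\hookrightarrow L^{s(\cdot)}(K_{\vec a})$ applies (this is the general fact recalled in Section 3.1 stating that $q(x)\ge p(x)$ gives $L^{q(\cdot)}\subset L^{p(\cdot)}$ with norm controlled by $C(|\Omega|,s^\pm)$). This gives the first inequality with a constant depending only on $N$, $\vec a$ and $s^{\pm}$.

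For the middle inequality I would invoke the standard Lebesgue interpolation \eqref{eq:inter-1} with the exponents $2<s^+<(p_h^-)^\ast$: choosing $q=(p_h^-)^\ast$ and $s=s^+$, the interpolation identity $\frac{1}{s^+}=\frac{\theta}{(p_h^-)^\ast}+\frac{1-\theta}{2}$ yields exactly the $\theta$ defined in \eqref{eq:r-crit} (a direct algebraic check using $(p_h^-)^\ast=\frac{Np_h^-}{N-p_h^-}$ gives $\frac{1}{(p_h^-)^\ast}=\frac{1}{p_h^-}-\frac{1}{N}$, so $\frac{N+2}{2N}-\frac{1}{p_h^-}=\frac{1}{2}-\frac{1}{(p_h^-)^\ast}$). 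For the rightmost inequality I reduce to the constant-exponent Proposition \ref{pro:Acerbi-Fusco-1994} applied to the constant vector $(p_1^-,\ldots,p_N^-)$, whose harmonic mean is $p_h^-$ and whose critical exponent is exactly $(p_h^-)^\ast$. Condition \eqref{eq:p-const-1} holds by the hypothesis $\frac{2N}{N+2}<p_i^-$ and the assumption $p_i^+<(p_h^-)^\ast$ (which in particular forces $p_i^-<(p_h^-)^\ast$). To apply Proposition \ref{pro:Acerbi-Fusco-1994} in the form with $\|u\|_{2}$ on the right (noted in the paragraph following its statement), I only need $D_i u\in L^{p_i^-}(K_{\vec a})$, which follows from $D_iu\in L^{p_i(\cdot)}(K_{\vec a})$ and the embedding $L^{p_i(\cdot)}(K_{\vec a})\hookrightarrow L^{p_i^-}(K_{\vec a})$, giving $\|D_iu\|_{p_i^-}\le C\|D_iu\|_{p_i(\cdot)}$.

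The only mildly delicate point is verifying that the exponent $\theta$ obtained from the constant-exponent interpolation in step two is precisely the one in \eqref{eq:r-crit}; this is the routine algebraic identity just mentioned. Everything else is a straightforward assembly of the three pieces, so I do not expect any genuine obstacle — the lemma is essentially a catalogue entry that chains a variable-to-constant embedding, a constant scale interpolation, and the constant-exponent anisotropic Sobolev inequality of Acerbi–Fusco.
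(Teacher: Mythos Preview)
Your proposal is correct and follows exactly the route the paper intends: the paper does not give an explicit proof of this lemma but introduces it with the sentence ``The interpolation inequality \eqref{eq:interpol-global} can be adapted to the case of variable exponents $p_i$,'' and your three-step chain (variable-to-constant Lebesgue embedding, constant-exponent interpolation \eqref{eq:inter-1} with $q=(p_h^-)^\ast$, then Proposition~\ref{pro:Acerbi-Fusco-1994} applied to the constant vector $(p_1^-,\ldots,p_N^-)$) is precisely that adaptation. The algebraic verification of $\theta$ you carry out is the only nontrivial check, and it is correct.
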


\begin{cor}
Let the conditions of Lemma \ref{le:interpol-var} be fulfilled and $\|u\|_{2,\Omega}=M_0$ be a known constant. Then

\begin{equation}
\label{eq:global-conv}
\|u\|_{s^+,K_{\vec a}}\leq C\sum_{i=1}^N\|D_i u\|_{p^-_i,K_{\vec a}}^{\theta}+ C'
\end{equation}
with constants $C$, $C'$ depending on $a$, $N$, and $M_0$.
\end{cor}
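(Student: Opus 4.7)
The plan is to obtain the stated estimate as a direct consequence of the last line of the interpolation inequality \eqref{eq:interpol-global-var}, using only the hypothesis $\|u\|_{2,\Omega}=M_0$ and the elementary subadditivity of the map $t\mapsto t^{\theta}$ for $\theta\in(0,1)$. No new analytic input is needed beyond Lemma \ref{le:interpol-var}.

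Concretely, I would proceed in three short steps. First, I substitute the assumption $\|u\|_{2,K_{\vec a}}=M_0$ into the right-hand side of \eqref{eq:interpol-global-var}, which yields
\[
\|u\|_{s^+,K_{\vec a}}\leq C\, M_0^{1-\theta}\sum_{i=1}^{N}\left(\|D_i u\|_{p_i^-,K_{\vec a}}+M_0\right)^{\theta},
\]
with $C=C(N,\vec a)$ inherited from Lemma \ref{le:interpol-var}. Second, since $\theta\in(0,1)$, the function $t\mapsto t^{\theta}$ is concave on $[0,\infty)$ with $0^{\theta}=0$, so it is subadditive: $(\alpha+\beta)^{\theta}\leq \alpha^{\theta}+\beta^{\theta}$ for all $\alpha,\beta\geq 0$. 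Applying this termwise gives
\[
\|u\|_{s^+,K_{\vec a}}\leq C\, M_0^{1-\theta}\sum_{i=1}^{N}\|D_i u\|_{p_i^-,K_{\vec a}}^{\theta}+C\,N\,M_0.
\]
Third, I absorb the factor $M_0^{1-\theta}$ into a new constant $\widetilde{C}=C\,M_0^{1-\theta}$ and set $C'=C\,N\,M_0$; both depend only on $N$, $\vec a$, and $M_0$, which is exactly the form asserted.

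Since the argument is a purely algebraic rearrangement of Lemma \ref{le:interpol-var} combined with one standard elementary inequality, there is no genuine obstacle. The only point worth flagging is the use of the concavity bound $(\alpha+\beta)^{\theta}\leq\alpha^{\theta}+\beta^{\theta}$, which is valid precisely because $\theta<1$; this is guaranteed by the range of $s^+$ specified in \eqref{eq:r-crit}.
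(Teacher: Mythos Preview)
Your proof is correct and is precisely the natural argument the paper has in mind; the corollary is stated in the paper without proof, and your three-step derivation from \eqref{eq:interpol-global-var} via the subadditivity of $t\mapsto t^{\theta}$ for $\theta\in(0,1)$ is the intended route. There is nothing to add.
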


\section{Regularization. The approximate problems}
\label{sec:regularization}
The solution of the problem \eqref{eq:main} is obtained as the limit of the family $\{u_\epsilon\}_{\epsilon >0}$ of solutions to the regularized nondegenerate problems

\begin{equation}
\label{eq:main-reg}
\begin{split}
& u_t-\sum_{j=1}^{N}D_j\left((\epsilon^2+|D_ju|^2)^{\frac{p_j(z)-2}{2}}D_ju\right)=f\quad \text{in $Q_T$},
\\
& \text{$u=0$ on $\partial\Omega\times (0,T)$},
\qquad
\text{$u(x,0)=u_0(x)$ in $\Omega$},\quad \epsilon \in (0,1).
\end{split}
\end{equation}
The solution of the problem \eqref{eq:main-reg} with $\epsilon\in (0,1)$ is a function satisfying the following conditions

\begin{itemize}
\item[(i)] $u_\epsilon \in C^0([0,T];L^2(\Omega))\cap \mathbb{W}(Q_T)$, $u_{\epsilon t}\in L^{2}(Q_T)$,

\item[(ii)] for every $\phi\in \mathbb{W}(Q_T)$

\begin{equation}
\label{eq:def-main-reg}
\int_{Q_T}\left(u_{\epsilon t}\phi+\sum_{i=1}^{N}(\epsilon^2+|D_iu_\epsilon|^{2})^{\frac{p_i(z)-2}{2}}D_iu_\epsilon D_i\phi\right)\,dz=\int_{Q_T}f\phi\,dz,
\end{equation}

\item[(iii)] for every $\phi\in L^2(\Omega)$ $(u_\epsilon(\cdot,t)-u_0,\phi)_{2,\Omega}\to 0$ as $t\to 0^+$.
\end{itemize}

\begin{thm}
\label{th:existence-reg}
Let $\Omega=K_{\vec a}$ and $\vec p(z)$ satisfy the conditions of Theorem \ref{th:existence}. Then for every $\epsilon \in (0,1)$ and every $u_0\in W^{1,2}_0(\Omega)\cap W^{1,\vec p(\cdot,0)}_0(\Omega)$ and $f\in L^2(0,T; W^{1,2}_0(\Omega))$, problem \eqref{eq:main-reg} has a unique weak solution

\[
u_\epsilon \in \mathbb{W}(Q_T)\cap L^{\infty}(0,T;W^{1,2}_0(\Omega)\cap W^{1,\vec p(\cdot)}_0(\Omega)),
\]
with

\begin{equation}
\label{eq:est-epsilon}
\begin{split}
\displaystyle \|u_{\epsilon t}\|_{2,Q_T} & +\operatorname{ess}\sup\limits_{(0,T)}\|u_\epsilon\|_{W_0^{1,2}(\Omega)}+ \operatorname{ess}\sup\limits_{(0,T)}\|u_\epsilon\|_{W_0^{1,\vec p(\cdot, t)}(\Omega)}
\\
& \leq C\left(1+\|f\|_{L^2(0,T;W^{1,2}(\Omega))}+ \|u_0\|_{W_0^{1,2}(\Omega)}+ \|u_0\|_{W_0^{1,\vec p(x,0)}(\Omega)}\right)
\end{split}
\end{equation}
with an independent of $\epsilon$ constant $C$. Moreover,

\begin{equation}
\label{eq:high-reg}
\sum_{i=1}^{N}\int_{Q_T}|D_iu_\epsilon|^{p_i(z)+r}\,dz\leq C\qquad \text{for every $\displaystyle r\in \left(0, \frac{2N(1-\mu)+4}{N+2}\right)$}
\end{equation}
with a constant $C$ independent of $\epsilon$, and the constant $\mu$ defined in \eqref{eq:p-i-2}.
\end{thm}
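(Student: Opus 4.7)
The plan is to use a Galerkin scheme in the basis $\{\psi_k\}$ constructed in Section \ref{sec:density}. For each $m\in\mathbb{N}$, seek $u^{(m)}(x,t)=\sum_{k=1}^{m}d_k^{(m)}(t)\psi_k(x)$ satisfying the system obtained by testing \eqref{eq:main-reg} against each $\psi_k$, with $u^{(m)}(\cdot,0)$ the orthogonal projection of $u_0$ onto $\operatorname{span}\{\psi_1,\ldots,\psi_m\}$. Because of the regularization $\epsilon^2+|D_ju|^2\geq\epsilon^2>0$, the right-hand side of the resulting ODE in the coefficients $d_k^{(m)}$ is locally Lipschitz, so local-in-time existence is classical; the a priori estimates below extend the solution to all of $[0,T]$.

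Three uniform-in-$m$ and $\epsilon$ estimates are the workhorse. Testing with $u^{(m)}$ yields control of $\sup_{(0,T)}\|u^{(m)}\|_{2,\Omega}^2$ and of $\sum_i\int_{Q_T}(\epsilon^2+|D_iu^{(m)}|^2)^{p_i(z)/2}\,dz$. Testing with $u^{(m)}_t$ produces the bound $\|u^{(m)}_t\|_{2,Q_T}\leq C$; the $t$-dependence of $p_i$ forces one to rewrite $(\epsilon^2+|D_iu|^2)^{(p_i-2)/2}D_iu\cdot D_iu_t$ as $\partial_t$ of a convex primitive plus an error proportional to $\partial_t p_i$, absorbed by Lipschitz continuity of $p_i$ together with the first estimate. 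Testing with $-\Delta u^{(m)}$ (a legitimate element of the Galerkin space, since $\Delta\psi_k=-\lambda_k\psi_k$; moreover $\Delta\psi_k$ vanishes on each face of $K_{\vec a}$, eliminating the boundary terms in the integration by parts of the anisotropic flux) produces, after expanding the spatial derivatives of the nonlinear term and applying Cauchy--Young, the second-order estimate
\[
\sup_{(0,T)}\|\nabla u^{(m)}\|_{2,\Omega}^{2}+\sum_{i,j=1}^{N}\int_{Q_T}(\epsilon^2+|D_ju^{(m)}|^2)^{\frac{p_j(z)-2}{2}}|D^2_{ij}u^{(m)}|^2\,dz\leq C,
\]
where the hypothesis $f\in L^2(0,T;W^{1,2}_0(\Omega))$ enters through an $\int\nabla f\cdot\nabla u$ term and the cross terms containing $\nabla p_i$ are tamed by Lipschitz continuity of $p_i$ combined with the logarithmic factor produced by differentiating the exponent. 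From these three inequalities the estimate \eqref{eq:est-epsilon} follows.

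The higher integrability \eqref{eq:high-reg} is obtained by applying the anisotropic interpolation inequality of Lemma \ref{le:interpol-var} to $v_i=(\epsilon^2+|D_iu^{(m)}|^2)^{p_i(z)/4}$: the second-order bound controls $\|\nabla v_i\|_{2,\Omega}$ for a.e.\ $t$, while the energy estimate controls $\|v_i\|_{2,\Omega}$. Choosing $s^+$ close to the critical exponent $(p_h^-)^\ast$ and integrating in $t$ yields a bound on $\int_{Q_T}|D_iu^{(m)}|^{p_i(z)+r}\,dz$ for every $r\in(0,r^\ast)$ with $r^\ast=(4-2N(\mu-1))/(N+2)$; the gap condition $\mu<1+1/N$ is exactly what renders $r^\ast$ positive and keeps $p_i(z)+r$ strictly below the anisotropic Sobolev exponent. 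To pass $m\to\infty$ at fixed $\epsilon$, use weak compactness in the reflexive spaces, strong $L^2(Q_T)$-compactness via Aubin--Lions, and Vitali's theorem (thanks to the equi-integrable $(p_i+r)$-estimate) to deduce a.e.\ convergence of $\nabla u^{(m)}$, which identifies the nonlinear limit by continuity of the flux; Minty's monotonicity argument provides an alternative route. Uniqueness is immediate upon testing the difference equation with $u_\epsilon-\tilde u_\epsilon$ and using strict monotonicity of $\xi\mapsto(\epsilon^2+|\xi|^2)^{(p-2)/2}\xi$. The main technical obstacle I anticipate is the third a priori estimate: organizing the many cross terms that arise when $-\Delta u$ is inserted into the anisotropic equation with nonconstant $p_i$, and managing the $\epsilon$-dependence carefully enough that the constants in \eqref{eq:est-epsilon} and \eqref{eq:high-reg} remain independent of $\epsilon$.
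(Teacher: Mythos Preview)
Your overall architecture---Galerkin in the eigenfunction basis, testing with $u^{(m)}$, $u_t^{(m)}$, and $-\Delta u^{(m)}$, then passing to the limit by compactness and monotonicity---matches the paper's. There is, however, a genuine gap in how you close the a priori estimates, and your route to higher integrability differs from the paper's in a way that matters for that closure.

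The three estimates do not close sequentially. Because $p_i$ depends on $z$, both the $u_t$-test and the $-\Delta u$-test leave residual terms of the type $\int_{Q_T}(\epsilon^2+|D_iu|^2)^{p_i/2}\,|\ln(\epsilon^2+|D_iu|^2)|\,dz$, and these are \emph{not} absorbed by the first energy estimate: they force a bound on $\int_{Q_T}|D_iu|^{p_i+\rho}\,dz$ for some $\rho>0$ (this is exactly the form of Lemmas~\ref{le:a-priori-2} and~\ref{le:a-priori-3} in the paper). So \eqref{eq:est-epsilon} cannot be obtained before higher integrability; the two are coupled. The paper resolves this in Lemma~\ref{le:refine} by first proving (Theorem~\ref{th:higher-integr-par}) that for every $\delta\in(0,1)$
\[
\int_{Q_T}(\epsilon^2+|D_iu|^2)^{\frac{p_i+r-2}{2}}|D_iu|^2\,dz\le \delta\sum_{j=1}^N\int_{Q_T}(\epsilon^2+|D_iu|^2)^{\frac{p_i-2}{2}}(D^2_{ij}u)^2\,dz + C(\delta),
\]
and then taking $\delta$ small enough to absorb the right-hand side into the second-order term on the left of \eqref{eq:3rd-est}. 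That small-$\delta$ structure is what breaks the circularity.

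This is where your higher-integrability argument diverges. The paper does not interpolate $v_i=(\epsilon^2+|D_iu|^2)^{p_i/4}$; it integrates $\int_\Omega(\epsilon^2+|D_iu|^2)^{(p_i+r-2)/2}(D_iu)^2\,dx$ by parts in $x_i$, transferring a derivative onto $u$, and then controls the resulting $\int|u|^{s_j}\,dx$ by the anisotropic interpolation of Lemma~\ref{le:interpol-var} applied to $u$ itself on small cubes (Section~\ref{sec:higher-integr}). It is this route that (i) produces, via Young's inequality, the arbitrary small coefficient $\delta$ in front of the second-order term, and (ii) makes the gap condition $\mu<1+\tfrac1N$ emerge as the threshold giving the stated $r^\ast=\tfrac{4-2N(\mu-1)}{N+2}$. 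Your interpolation of $v_i$ would instead give a multiplicative bound of the shape $(\sup_t\|v_i\|_{2,\Omega})^{\alpha}\,\|\nabla v_i\|_{L^2(Q_T)}^{\beta}$ (note also that the second-order estimate only places $\nabla v_i$ in $L^2(Q_T)$, not in $L^2(\Omega)$ uniformly in $t$); this neither carries a small parameter to close the loop with Lemmas~\ref{le:a-priori-2}--\ref{le:a-priori-3}, nor obviously reproduces the precise range of $r$ claimed in \eqref{eq:high-reg}.
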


A solution of problem \eqref{eq:main-reg} is constructed as the limit of the sequence of finite-dimensional Galerkin's approximations in the basis $\{\psi_i\}$. Let $u^{(m)}=\sum_{i=1}^m c_{m,i}(t)\psi_i(x)$ where the coefficients $\vec c_m=(c_{m,1},\ldots,c_{m,m})$
are defined from the system of ordinary differential equations

\begin{equation}
\label{eq:ODE-m}
\begin{split}
& c'_{m,i}(t)=-\sum_{j=1}^N\left(\mathcal{F}^{(\epsilon)}_j(z,D_j u_\epsilon^{(m)}),D_j\psi_i\right)_{2,\Omega}+(f(\cdot, t),\psi_i)_{2,\Omega},
\\
& c_{m,i}(0)=(u_0^{(m)},\psi_i)_{2,\Omega}, \quad i=\overline{1,m},
\\
& \mathcal{F}^{(\epsilon)}_j(z,D_j u_\epsilon^{(m)}):=(\epsilon^2+|D_ju_\epsilon^{(m)}|^2)^{\frac{p_j(z)-2}{2}}D_ju_\epsilon^{(m)}.
\end{split}
\end{equation}
By Lemma \ref{le:density-stationary}, the sequence $\{c_{m,i}(0)\}$ can be chosen so that

\[
u_0^{(m)}(x)=\sum_{i=1}^{m}c_{m,i}(0)\psi_i\to u_0\quad \text{in $W_0^{1,\vec q(\cdot)},\quad q_i(x)=\max\{p_i(x,0),2\}$}.
\]
By the Carath\'eodory existence theorem, for every $u_0\in W^{1,\vec q(x)}(\Omega)$, $f\in L^2(0,T;W_0^{1,2}(\Omega))$ and $m\in \mathbb{N}$ problem \eqref{eq:ODE-m} has a solution $\vec c_m = (c_{m,1}, c_{m,2}, \dots, c_{m,m})$ on an interval $[0,T_m]$. The possibility of continuation of each of $c_{m,i}$ to the interval $[0,T]$ will follow from the uniform a priori estimates on $u_\epsilon^{(m)}$ derived in the next section.

The global second-order differentiability for the solution to \eqref{eq:main-reg} is given in the following theorem.

\begin{thm}
\label{th:high-diff-reg}
Under the conditions of Theorem \ref{th:existence-reg}

\[
(\epsilon^2+|D_iu_\epsilon|^2)^{\frac{p_i(z)-2}{4}}D_iu_\epsilon\in W^{1,2}(Q_T),\qquad i=\overline{1,N}.
\]
\end{thm}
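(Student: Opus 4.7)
The plan is to prove Theorem \ref{th:high-diff-reg} by deriving two uniform (in the Galerkin index $m$) weighted second-order a priori estimates for the approximations $u^{(m)}_\epsilon$ of \eqref{eq:ODE-m}, and then passing to the limit $m\to\infty$. Setting $v_i^{(m)}:=(\epsilon^2+|D_iu^{(m)}_\epsilon|^2)^{(p_i(z)-2)/4}D_iu^{(m)}_\epsilon$, a direct chain-rule computation (exploiting $p_i\in C^{0,1}(\overline{Q}_T)$) reduces the claim $v_i^{(m)}\in W^{1,2}(Q_T)$ uniformly to the two weighted bounds
\begin{align*}
I_1 & := \sum_{i,k}\int_{Q_T}(\epsilon^2+|D_iu^{(m)}_\epsilon|^2)^{(p_i(z)-2)/2}|D^2_{ki}u^{(m)}_\epsilon|^2\,dz\le C,\\
I_2 & := \sum_i\int_{Q_T}(\epsilon^2+|D_iu^{(m)}_\epsilon|^2)^{(p_i(z)-2)/2}|D_iu^{(m)}_{\epsilon,t}|^2\,dz\le C,
\end{align*}
plus lower-order $\ln$-type remainders involving $\nabla_{x,t}p_i$ which are absorbed via Young's inequality together with the higher integrability \eqref{eq:high-reg}.

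For $I_1$, I would test the Galerkin ODE \eqref{eq:ODE-m} with $-\Delta u^{(m)}_\epsilon=\sum_i\lambda_ic_{m,i}\psi_i\in\operatorname{span}\{\psi_1,\dots,\psi_m\}$. Both $\psi_i|_\Gamma=0$ and $\Delta\psi_i|_\Gamma=0$ (see Section~\ref{sec:density}), so the iterated integration by parts that transfers one spatial derivative from $-\Delta u^{(m)}_\epsilon$ onto the flux produces no boundary contributions. Expanding
\[
D_k\mathcal F_j^{(\epsilon)}(z,D_ju^{(m)}_\epsilon)=\partial_\xi\mathcal F_j^{(\epsilon)}\cdot D^2_{kj}u^{(m)}_\epsilon+\partial_{x_k}\mathcal F_j^{(\epsilon)}\big|_{\xi\text{ fix}}
\]
and invoking the structural lower bound $\partial_\xi\mathcal F_j^{(\epsilon)}(z,\xi)\ge c\min\{1,p_j-1\}(\epsilon^2+\xi^2)^{(p_j-2)/2}$, one obtains the good quadratic form dominating $I_1$. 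The forcing term $(f,-\Delta u^{(m)}_\epsilon)$ is converted by a single spatial integration by parts into $(\nabla f,\nabla u^{(m)}_\epsilon)$ (the boundary contribution vanishes since $f(\cdot,t)\in W^{1,2}_0(\Omega)$), and is controlled by $\|f\|_{L^2(0,T;W^{1,2}_0(\Omega))}$ together with \eqref{eq:est-epsilon}.

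For $I_2$, which is the main technical obstacle because $f$ is not assumed to carry any time-regularity, I would use time finite-differences. For small $h>0$, subtract the Galerkin identity at $t$ from that at $t+h$, divide by $h$, and test with $\delta_h u^{(m)}_\epsilon(t):=(u^{(m)}_\epsilon(t+h)-u^{(m)}_\epsilon(t))/h$. The mean-value decomposition
\[
\delta_h\mathcal F_j^{(\epsilon)}=A_j^{(h)}\,D_j\delta_hu^{(m)}_\epsilon+B_j^{(h)},\qquad A_j^{(h)}:=\int_0^1\partial_\xi\mathcal F_j^{(\epsilon)}\big|_{\xi=\eta_j(s)}\,ds,
\]
with $\eta_j(s)$ interpolating between $D_ju^{(m)}_\epsilon(t)$ and $D_ju^{(m)}_\epsilon(t+h)$, produces the weighted form $\sum_j\iint A_j^{(h)}|D_j\delta_hu^{(m)}_\epsilon|^2\,dx\,dt$ on the LHS, which by Fatou passes to $I_2$ as $h\to0^+$. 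The initial datum $\|\delta_hu^{(m)}_\epsilon(0)\|_{2,\Omega}$ is bounded uniformly because $\|u^{(m)}_{\epsilon,t}(0)\|_{2,\Omega}$ is directly estimable from \eqref{eq:ODE-m} at $t=0$ thanks to $u_0\in W^{1,2}_0(\Omega)\cap W^{1,\vec p(\cdot,0)}_0(\Omega)$. The delicate point is the uniform-in-$h$ bound on $\int_0^{T-h}(\delta_hf,\delta_hu^{(m)}_\epsilon)\,dt$: by Fubini this is rewritten as a pairing between $f$ and a time-convolution of $u^{(m)}_{\epsilon,t}$; a spatial integration by parts (using $f(\cdot,t)\in W^{1,2}_0(\Omega)$) combined with Young's inequality then splits the result into a piece absorbed into $I_2$ and a harmless remainder controlled by $\|\nabla f\|_{L^2(Q_T)}$ and the $L^2(Q_T)$-bound on $u^{(m)}_{\epsilon,t}$ issuing from \eqref{eq:est-epsilon}.

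Finally, combining the uniform estimates $I_1,I_2\le C$ with the chain-rule reduction, and passing $m\to\infty$ by weak $L^2(Q_T)$-compactness of $\nabla_{x,t}v_i^{(m)}$ together with the strong (and pointwise a.e.) convergence $D_iu^{(m)}_\epsilon\to D_iu_\epsilon$ already obtained in the proof of Theorem \ref{th:existence-reg} to identify the nonlinear weights, one concludes $v_i\in W^{1,2}(Q_T)$.
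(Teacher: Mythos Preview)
Your treatment of the spatial part is essentially the paper's argument, though you re-derive machinery that is already in hand. The weighted second-order bound you call $I_1$ is precisely the estimate of Lemma~\ref{le:a-priori-3}, collected in \eqref{eq:refinement}; the paper's proof of Theorem~\ref{th:high-diff-reg} simply invokes \eqref{eq:refinement}, performs the chain-rule computation for $D_j v_i^{(m)}$ with $j=\overline{1,N}$, obtains the uniform bound \eqref{eq:2nd-order-m-eps}, and passes to the limit using the a.e.\ convergence of gradients from Section~\ref{a.e.-conv}. There is no need to re-run the test with $-\Delta u_\epsilon^{(m)}$.

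More importantly, the paper's proof \emph{does not address the time derivative} $\partial_t v_i$ at all: only spatial $D_j v_i$ appears, so the conclusion is effectively $v_i\in L^2(0,T;W^{1,2}(\Omega))$. Your attempt to supply $I_2$ via time-differences therefore goes beyond the paper, and as written it has two genuine gaps. First, the ``Fubini plus spatial integration by parts'' treatment of $\int_0^{T-h}(\delta_h f,\delta_h u_\epsilon^{(m)})\,dt$ is not convincing: under the sole hypothesis $f\in L^2(0,T;W^{1,2}_0(\Omega))$ there is no uniform-in-$h$ control on $\delta_h f$, and transferring $\delta_h$ onto $\delta_h u_\epsilon^{(m)}$ by discrete integration by parts produces a second time-difference $\delta_{-h}\delta_h u_\epsilon^{(m)}$, which is the very quantity you are trying to estimate; a spatial integration by parts does not touch this temporal obstruction. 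Second, the claimed uniform bound on $\|\delta_h u_\epsilon^{(m)}(0)\|_{2,\Omega}$ (equivalently $\|u^{(m)}_{\epsilon,t}(0)\|_{2,\Omega}$) is unjustified: evaluating \eqref{eq:ODE-m} at $t=0$ requires both a pointwise value $f(\cdot,0)$, which is undefined for $f\in L^2(0,T;W^{1,2}_0)$, and a uniform-in-$m$ $L^2(\Omega)$ bound on $\sum_j D_j\mathcal F_j^{(\epsilon)}(\cdot,0,D_j u_0^{(m)})$, which would need second-order regularity of $u_0$ that is not assumed. If you want the full space-time $W^{1,2}(Q_T)$ inclusion, you need stronger hypotheses on $f$ and $u_0$; under the stated assumptions, the spatial statement is what is actually available.
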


\section{A priori estimates}
\label{sec:estimates}
Fix a number $m\in \mathbb{N}$ and consider the function $u_\epsilon^{(m)}$. For the sake of presentation, when deriving the a priori estimates for the solutions of the regularized problem \eqref{eq:main-reg} we omit the indexes $\epsilon$ and $m$ and write $u\equiv u^{(m)}_\epsilon$.  Multiplying the $i$th equation of \eqref{eq:ODE-m} by $c_{m,i}$ and summing up for $i=\overline{1,m}$, we arrive at the relation %For $u,\phi\in \mathcal{P}_m$

\begin{equation}
\label{eq:1-est}
\begin{split}
\dfrac{1}{2}\dfrac{d}{dt}\|u(\cdot, t)\|_{2,\Omega}^2 & + \int_{\Omega}\sum_{i=1}^{N}(\epsilon^2+|D_iu|^2)^{\frac{p_i(z)-2}{2}}|D_iu|^2\,dx =\int_{\Omega}f u\,dx
\leq \frac{1}{2}\|f(\cdot, t)\|_{2,\Omega}^2+\frac{1}{2}\|u(\cdot, t)\|_{2,\Omega}^2.
\end{split}
\end{equation}

\begin{lemma}
\label{le:a-priori-1}
The approximate solutions $u\equiv u^{(m)}_\epsilon$ satisfy the uniform estimate

\begin{equation}
\label{eq:1st-est}
\sup_{(0,T)}\|u(t)\|_{2,\Omega}^2 + C\sum_{i=1}^N\int_{Q_T}(\epsilon^2+|D_i u|^2)^{\frac{p_i(z)-2}{2}}|D_iu|^2\,dz\leq C'\left(\|f\|_{2,Q_T}^2 +\|u_0\|_{2,\Omega}^{2}\right)
\end{equation}
with independent of $m$ and $\epsilon$ constants $C=C(T)$, $C'=C'(T)$.
\end{lemma}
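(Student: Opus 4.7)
The plan is to derive the estimate by a standard energy argument starting from the identity already written in \eqref{eq:1-est}. Concretely, I would multiply the $i$th equation of \eqref{eq:ODE-m} by $c_{m,i}(t)$ and sum over $i=\overline{1,m}$; since $u=\sum_i c_{m,i}\psi_i$ and $\{\psi_i\}$ is an $L^2(\Omega)$-orthonormal family, the time-derivative terms collapse into $\frac{1}{2}\frac{d}{dt}\|u(\cdot,t)\|_{2,\Omega}^2$, while the nonlinear terms produce $\int_\Omega\sum_i(\epsilon^2+|D_iu|^2)^{(p_i(z)-2)/2}|D_iu|^2\,dx$ by the definition of $\mathcal{F}^{(\epsilon)}_j$. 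The right-hand side $\int_\Omega f u\,dx$ is handled by Cauchy's inequality, exactly as in \eqref{eq:1-est}.

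Next I would drop the nonnegative flux term on the left-hand side to obtain the differential inequality $\frac{d}{dt}\|u(\cdot,t)\|_{2,\Omega}^2 \leq \|u(\cdot,t)\|_{2,\Omega}^2 + \|f(\cdot,t)\|_{2,\Omega}^2$ and apply Gronwall's lemma on $[0,T_m]$. To control the initial datum I would use the orthonormality of $\{\psi_i\}$, which gives Bessel's inequality
\[
\|u^{(m)}(0)\|_{2,\Omega}^2 = \sum_{i=1}^{m}c_{m,i}(0)^2 = \sum_{i=1}^{m}(u_0^{(m)},\psi_i)_{2,\Omega}^2 \leq \|u_0\|_{2,\Omega}^{2}.
\]
Gronwall then yields a uniform bound $\sup_{(0,T_m)}\|u(\cdot,t)\|_{2,\Omega}^2 \leq C(T)\bigl(\|u_0\|_{2,\Omega}^2+\|f\|_{2,Q_T}^2\bigr)$, with $C(T)$ independent of $m$ and $\epsilon$.

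Having this $L^\infty_tL^2_x$ bound, I would go back to \eqref{eq:1-est}, integrate over $(0,T_m)$, and use the $L^\infty$ bound just obtained to estimate $\int_0^{T_m}\|u(\cdot,t)\|_{2,\Omega}^2\,dt$ by $TC(T)(\|u_0\|_{2,\Omega}^2+\|f\|_{2,Q_T}^2)$. This produces the required bound on $\sum_i\int_{Q_{T_m}}(\epsilon^2+|D_iu|^2)^{(p_i(z)-2)/2}|D_iu|^2\,dz$. Finally, since all the estimates are uniform in $T_m$ and, in particular, the vector $\vec c_m(t)$ stays bounded in $\mathbb{R}^m$ on every finite subinterval of $[0,T_m]$, the local Carath\'eodory solution cannot blow up in finite time and can be continued to $[0,T]$; the estimates then pass over to the full interval.

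There is no real obstacle here: the argument is the standard basic energy inequality, the only points requiring minor care being the orthonormality argument for $\|u^{(m)}(0)\|_{2,\Omega}$ and the observation that the degenerate/singular flux term is nonnegative and can simply be dropped when setting up Gronwall. The $\epsilon$-independence of the constants follows because the flux term is discarded (not bounded below) in the Gronwall step and the only remaining $\epsilon$-dependence sits on the left-hand side with a positive sign.
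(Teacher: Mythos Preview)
Your argument is correct and essentially the same as the paper's: the paper's one-line proof simply multiplies \eqref{eq:1-est} by the integrating factor ${\rm e}^{-t}$ and integrates in $t$, which is Gronwall's inequality in disguise and yields both the $L^\infty_tL^2_x$ bound and the flux bound simultaneously, whereas you obtain them in two steps. One small slip: in your Bessel step you should have $\|u_0^{(m)}\|_{2,\Omega}^2$ rather than $\|u_0\|_{2,\Omega}^2$ on the right, but this is harmless since $u_0^{(m)}\to u_0$ strongly.
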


\begin{proof}
Estimate \eqref{eq:1st-est} follows after multiplication of \eqref{eq:1-est} by ${\rm e}^{-t}$ and integration in $t$.
\end{proof}

Multiplying the $i$th equation of \eqref{eq:ODE-m} by $c'_{m,i}(t)$, summing the results over $i=\overline{1,m}$, integrating in $t$ and taking into account the identities

\[
\begin{split}
& (\epsilon^2+|D_iu|^2)^{\frac{p_i-2}{2}}D_iuD_iu_t = \frac{1}{2}(\epsilon^2+|D_iu|^2)^{\frac{p_i-2}{2}}\left(|D_iu|^2\right)_t
\\
& \qquad = \dfrac{\partial}{\partial t}\left(\frac{1}{p_i}(\epsilon^2+|D_iu|^2)^{\frac{p_i}{2}}\right) +\frac{p_{it}}{p_i^2}(\epsilon^2+|D_iu|^2)^{\frac{p_i}{2}} -\dfrac{p_{it}}{p_i}(\epsilon^2+|D_iu|^2)^{\frac{p_i}{2}}\ln (\epsilon^2+|D_iu|^2)
\end{split}
\]
we arrive at the inequality
\[
\begin{split}
\|u_t\|_{2,Q_T}^2 & +\sum_{i=1}^N\int_{\Omega}\frac{1}{p_i}(\epsilon^2+|D_iu|^2)^{\frac{p_i(z)}{2}} \,dx
\leq \frac{1}{2}\|u_t\|_{2,Q_T}^2 +\frac{1}{2}\|f\|_{2,Q_T}^2
\\
& + \sum_{i=1}^N\int_{\Omega}\frac{1}{p_i(x,0)}(\epsilon^2+|D_iu_0|^2)^{\frac{p_i(x,0)}{2}} \,dx
\\
& -\sum_{i=1}^N \int_{Q_T}\left(\frac{p_{it}}{p_i^2}(\epsilon^2+|D_iu|^2)^{\frac{p_i(z)}{2}} +\dfrac{p_{it}}{p_i}(\epsilon^2+|D_iu|^2)^{\frac{p_i(z)}{2}}\ln (\epsilon^2+|D_iu|^2)\right)\,dz.
\end{split}
\]
The estimate on the first integral in the last line follows from \eqref{eq:1st-est}. The second integral is bounded by

\[
C_\mu\sum_{i=1}^N\int_{Q_T}\left(1+|D_iu|^2)^{\frac{p_i(z)+\rho}{2}}\right)\,dz
\]
with any constant $\rho>0$ by virtue of the following elementary inequalities:  for every $\rho>0$, there is a constant $C_\rho$ such that

\begin{equation}
\label{eq:elem-1}
\ln^2 s\leq C_{\rho}\begin{cases} s^{\rho} & \text{if $s\geq 1$},
\\
s^{-\rho} & \text{if $s\in (0,1)$},
\end{cases}
\end{equation}
and

\[
(\epsilon^2+|\vec \xi|^2)^{\frac{p_i+\rho}{2}}\leq \begin{cases}
(\sqrt{2}\epsilon)^{p_i+\rho}\leq 1 & \text{if $|\vec \xi|< \epsilon$},
\\
(\sqrt{2}|\vec \xi|)^{p_i+\rho} & \text{if $|\vec \xi|\geq \epsilon$}
\end{cases}
\leq C\left(1+|\vec \xi|^{p_i+\rho}\right),\quad \epsilon\in (0,1).
\]

\begin{lemma}
\label{le:a-priori-2}
If $|p_{it}|\leq L$ a.e. in $Q_T$, then for every constant $\rho\in (0,1)$ the functions $u\equiv u^{(m)}_\epsilon$ satisfy the estimates

\begin{equation}
\label{eq:2nd-est}
\begin{split}
\|u_t\|_{2,Q_T}^2 & +\sum_{i=1}^N\sup_{(0,T)}\int_{\Omega}\frac{1}{p_i}(\epsilon^2+|D_iu|^2)^{\frac{p_i(z)}{2}} \,dx
\\
&
\leq C\left(1+\|f\|_{2,Q_T}^2
+ \sum_{i=1}^N\int_{\Omega}|D_iu_0|^{p_i(x,0)}\,dx\right)
+C'\int_{Q_T}|D_iu|^{p_i(z)+\rho}\,dz
\end{split}
\end{equation}
with constants $C,C'$ depending on $\rho$ but independent of $m$ and $\epsilon$.
\end{lemma}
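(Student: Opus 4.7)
My plan is to pick up directly from the inequality displayed in the paragraph immediately preceding the statement, which is obtained by testing the $i$th equation of \eqref{eq:ODE-m} with $c'_{m,i}(t)$, summing over $i=\overline{1,m}$, integrating on $(0,T)$, and inserting the pointwise identity for $\partial_t\left(\tfrac{1}{p_i}(\epsilon^2+|D_iu|^2)^{p_i/2}\right)$. On the right-hand side the term $\tfrac{1}{2}\|u_t\|_{2,Q_T}^2$ is absorbed into the left, and the initial contribution is controlled by $C\bigl(1+\sum_i\int_\Omega|D_iu_0|^{p_i(x,0)}\,dx\bigr)$ via the elementary bound $(\epsilon^2+|\vec\xi|^2)^{p/2}\leq C(1+|\vec\xi|^p)$ recorded just above the lemma (valid uniformly for $\epsilon\in(0,1)$).

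The substantive step is to show that the two integrals depending on $p_{it}$ can be absorbed into $C'\int_{Q_T}|D_iu|^{p_i(z)+\rho}\,dz$ up to an additive constant. Using $|p_{it}|\leq L$ and $p_i\geq p^->1$, the non-logarithmic term is dominated by $C\int_{Q_T}(\epsilon^2+|D_iu|^2)^{p_i(z)/2}\,dz$, and the elementary inequality above reduces this to $C\int_{Q_T}(1+|D_iu|^{p_i+\rho})\,dz$. For the logarithmic term, set $s=\epsilon^2+|D_iu|^2$ and split $Q_T$ according to the sets $\{s\geq 1\}$ and $\{s<1\}$. On $\{s\geq 1\}$, \eqref{eq:elem-1} gives $s^{p_i/2}\ln s\leq C_\rho s^{(p_i+\rho)/2}\leq C_\rho(1+|D_iu|^{p_i+\rho})$; on $\{s<1\}$, $s^{p_i/2}|\ln s|\leq C_\rho s^{(p_i-\rho)/2}\leq C_\rho$ provided $\rho<p^-$, which is compatible with $\rho\in(0,1)$ since $p^->\tfrac{2N}{N+2}\geq 1$ for $N\geq 2$. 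Summing the pieces yields the required bound $C_\rho\int_{Q_T}(1+|D_iu|^{p_i+\rho})\,dz$.

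To recover the supremum in $t$, I would repeat the argument with $(0,T)$ replaced by $(0,t)$ for an arbitrary $t\in(0,T)$; the right-hand side of the resulting inequality is monotone non-decreasing in $t$ and hence majorised by its value at $t=T$. Taking the essential supremum over $t\in(0,T)$ of the spatial integral $\sum_i\int_\Omega \tfrac{1}{p_i}(\epsilon^2+|D_iu|^2)^{p_i/2}\,dx$ on the left, while keeping $\|u_t\|_{2,Q_T}^2$ from the choice $t=T$, produces \eqref{eq:2nd-est}.

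The only delicate point is the logarithmic integral: since $s=\epsilon^2+|D_iu|^2$ may be arbitrarily small as $\epsilon\to 0^+$ at points where $|D_iu|\approx 0$, one cannot dominate $|\ln s|$ by a single positive power of $s$, which is precisely why the two-sided form of \eqref{eq:elem-1} is needed to keep the bound uniform in $\epsilon\in(0,1)$. It should be emphasized that \eqref{eq:2nd-est} is not a closed a priori estimate: the higher-integrability term $C'\int_{Q_T}|D_iu|^{p_i(z)+\rho}\,dz$ on the right-hand side must be reabsorbed at a later stage using the global higher-integrability estimates derived in Section \ref{sec:higher-integr}.
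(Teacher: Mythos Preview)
Your proposal is correct and follows essentially the same route as the paper's derivation preceding the lemma. The only minor deviation is that the paper bounds the non-logarithmic term $\int_{Q_T}\frac{p_{it}}{p_i^2}(\epsilon^2+|D_iu|^2)^{p_i/2}\,dz$ directly via the first a~priori estimate \eqref{eq:1st-est} (Lemma~\ref{le:a-priori-1}) rather than by absorbing it into the higher-integrability term $C'\int_{Q_T}|D_iu|^{p_i+\rho}\,dz$ as you do; both are valid and lead to the same conclusion.
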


Let us multiply the nonlinear flux term in the $k$th equation of \eqref{eq:ODE-m} by $\lambda_k c_{m,k}(t)$, and sum up the results for $k=\overline{1,m}$. Recall the convention to denote $u\equiv u_\epsilon^{(m)}$. For every $i=\overline{1,N}$

\begin{equation}
\label{eq:3rd-est-pre}
\begin{split}
& -\left(\mathcal{F}^{(\epsilon)}_i(z, D_i u),\sum_{k=1}^m\lambda_k c_{m,k}(t) D_i\psi_k\right)_{2,\Omega}
=\int_\Omega D_i((\epsilon^2+|D_{i}u|^2)^{\frac{p_i(z)-2}{2}}D_iu)\Delta u\,dx
\\
&
=\int_{\partial \Omega}(\epsilon^2+|D_{i}u|^2)^{\frac{p_i(z)-2}{2}}D_iu \cos(\vec{\nu},x_i)\Delta u\,dS - \int_{\Omega}(\epsilon^2+|D_{i}u|^2)^{\frac{p_i(z)-2}{2}}D_iu \sum_{j=1}^N D_j(D^2_{ij}u)\,dx
\\
& = \int_{ \partial \Omega}(\epsilon^2+|D_{i}u|^2)^{\frac{p_i(z)-2}{2}}D_iu \left( \cos(\vec{\nu},x_i)\Delta u-\sum_{j=1}^N \cos(\vec{\nu},x_j)D^2_{ij}u\right)\,dS
\\
&
\qquad +\sum_{j=1}^N\int_{\Omega}D_j\left((\epsilon^2+|D_{i}u|^2)^{\frac{p_i(z)-2}{2}}D_iu\right) D^2_{ij}u\,dx,
\end{split}
\end{equation}
where $\vec{\nu}$ is the exterior normal to $\partial \Omega$. In the first line of \eqref{eq:3rd-est-pre} we integrated by parts and used the fact that $\psi_k=0$ on $\partial\Omega$. By splitting the integrals over $\partial \Omega$ into $\Gamma_i$ and using the fact that for the rectangular domain $\cos(\vec{\nu},x_j)|_{\Gamma_i}=0$ if $i\not =j$, we obtain
\[
\int_\Omega D_i((\epsilon^2+|D_{i}u|^2)^{\frac{p_i(z)-2}{2}}D_iu)\Delta u\,dx= \sum_{j=1}^N\int_{\Omega}D_j((\epsilon^2+|D_{i}u|^2)^{\frac{p_i(z)-2}{2}}D_iu) D^2_{ij}u\,dx:=\mathcal{I}_i.
\]
The straightforward computation shows that

\[
\begin{split}
\mathcal{I}_i & =\sum_{j=1}^{N}\int_{\Omega}(p_i-1)(\epsilon^2+|D_{i}u|^2)^{\frac{p_i(z)-2}{2}}\left(D^2_{ij}u\right)^2\,dx
\\
& + \sum_{j=1}^N \int_\Omega (\epsilon^2+|D_{i}u|^2)^{\frac{p_i(z)-2}{2}}D_iu D_jp_i \ln (\epsilon^2+|D_iu|^2) D^2_{ij}u \,dx\equiv \mathcal{J}_1+\mathcal{J}_2.
\end{split}
\]
The second term is bounded by Young's inequality: for every $\delta>0$

\[
\begin{split}
|\mathcal{J}_2| & \leq \delta \sum_{j=1}^N \int_{\Omega}(\epsilon^2+|D_{i}u|^2)^{\frac{p_i(z)-2}{2}}\left(D^2_{ij}u\right)^2\,dx
+  C_\delta\sum_{j=1}^N \int_{\Omega}(\epsilon^2+|D_{i}u|^2)^{\frac{p_i(z)}{2}}
\ln^2 (\epsilon^2+|D_{i}u|^2)\,dx.
\end{split}
\]
Applying \eqref{eq:elem-1} we find that

\[
\begin{split}
\mathcal{J}_2 & \leq \delta \sum_{j=1}^N \int_{\Omega}(\epsilon^2+|D_{i}u|^2)^{\frac{p_i(z)-2}{2}}\left(D^2_{ij}u\right)^2\,dx
+ C \left(1+\int_{\Omega}|D_iu|^{p_i(z)+\rho}\,dx\right).
\end{split}
\]
Choose $\delta$ so small that $\min_ip_i^->1+\delta$. For every $\rho\in (0,1)$, we obtain the inequality

\begin{equation}
\label{eq:inter-new}
\begin{split}
\dfrac{1}{2}\dfrac{d}{dt}\left(\|\nabla u\|^2_{2,\Omega}\right) & + \sum_{i,j=1}^N\int_{\Omega}(p_i-1-\delta)(\epsilon^2+|D_iu|^2)^{\frac{p_i(z)-2}{2}} \left(D^2_{ij}u\right)^2\,dx
\\
&
\leq C\left(1+\sum_{i=1}^N\int_{\Omega}|D_iu|^{p_i(z)+\rho}\,dx\right) + \int_\Omega \nabla f\cdot \nabla u\,dx.
\end{split}
\end{equation}
The last term is estimated by

\[
(\nabla f,\nabla u)_{2,\Omega}\leq \frac{1}{2}\|\nabla f\|^2_{2,\Omega} + \frac{1}{2}\|\nabla u\|_{2,\Omega}^{2}.
\]
Arguing as in the proof of Lemma \ref{le:a-priori-1} we obtain

\begin{lemma}
\label{le:a-priori-3}
If $|\nabla p_i|\leq L$ a.e. in $Q_T$, then the functions $u\equiv u_\epsilon^{(m)}$ satisfy the estimates

\begin{equation}
\label{eq:3rd-est}
\begin{split}
\sup_{(0,T)}\|\nabla u(t)\|^2_{2,\Omega} & + \sum_{i,j=1}^N\int_{Q_T}(\epsilon^2+|D_iu|^2)^{\frac{p_i(z)-2}{2}} \left(D^2_{ij}u\right)^2\,dz
\\
&
\leq C\left(1+\sum_{i=1}^N\int_{Q_T}|D_iu|^{p_i(z)+\rho}\,dz + \|\nabla u_0\|_{2,\Omega}^2+\| \nabla f\|_{2,Q_T}^{2}\right)
\end{split}
\end{equation}
with any $\rho\in (0,1)$ and a constant $C$ independent of $m$, $\epsilon$.
\end{lemma}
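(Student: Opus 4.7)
The bulk of the work is already encoded in inequality \eqref{eq:inter-new} and in the Young-type estimate $(\nabla f,\nabla u)_{2,\Omega}\le \tfrac12\|\nabla f\|_{2,\Omega}^{2}+\tfrac12\|\nabla u\|_{2,\Omega}^{2}$ recorded just above the statement. Substituting the latter into the former yields, for a.e.\ $t\in(0,T)$,
\[
\tfrac12\tfrac{d}{dt}\|\nabla u(\cdot,t)\|_{2,\Omega}^{2}
+\sum_{i,j=1}^{N}(p_i-1-\delta)\!\int_{\Omega}(\epsilon^{2}+|D_iu|^{2})^{\frac{p_i(z)-2}{2}}(D^{2}_{ij}u)^{2}\,dx
\le C\Bigl(1+\sum_{i=1}^{N}\!\int_{\Omega}|D_iu|^{p_i(z)+\rho}\,dx\Bigr)+\tfrac12\|\nabla f\|_{2,\Omega}^{2}+\tfrac12\|\nabla u\|_{2,\Omega}^{2}.
\]
Because $\delta$ was chosen so that $\min_i p_i^{-}>1+\delta$, the coefficients $p_i-1-\delta$ are bounded below by a strictly positive constant $\kappa=\kappa(p_i^{-},\delta)$ uniformly in $z$, $m$ and $\epsilon$.

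The plan is now to run the same Gronwall device already used in the proof of Lemma~\ref{le:a-priori-1}. Multiply the displayed inequality by $e^{-t}$; the leading terms regroup into $\tfrac12\tfrac{d}{dt}\bigl(e^{-t}\|\nabla u\|_{2,\Omega}^{2}\bigr)$, absorbing the stray $\tfrac12\|\nabla u\|_{2,\Omega}^{2}$ on the right. Integrate in $t$ over $(0,\tau)$ for any $\tau\in(0,T]$ and use $e^{-t}\in[e^{-T},1]$ to trade the exponential weight for a harmless multiplicative constant. This gives
\[
\|\nabla u(\tau)\|_{2,\Omega}^{2}
+\kappa\sum_{i,j=1}^{N}\int_{Q_\tau}(\epsilon^{2}+|D_iu|^{2})^{\frac{p_i(z)-2}{2}}(D^{2}_{ij}u)^{2}\,dz
\le C\Bigl(1+\sum_{i=1}^{N}\!\int_{Q_T}|D_iu|^{p_i(z)+\rho}\,dz+\|\nabla f\|_{2,Q_T}^{2}+\|\nabla u^{(m)}(0)\|_{2,\Omega}^{2}\Bigr).
\]
The initial term is controlled by $C\|\nabla u_0\|_{2,\Omega}^{2}$ because the Galerkin data were prescribed (via Lemma~\ref{le:density-stationary}) so that $u_{0}^{(m)}\to u_0$ in $W^{1,\vec q(\cdot)}_0(\Omega)$ with $q_i(x)=\max\{p_i(x,0),2\}\ge 2$, and this convergence embeds continuously into $W^{1,2}_0(\Omega)$.

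Taking $\sup_{\tau\in(0,T)}$ on the left-hand side, noting that the right-hand side is already $\tau$-independent, and recalling that $\kappa$ does not depend on $m$ or $\epsilon$, produces estimate \eqref{eq:3rd-est}. There is no genuine obstacle at this stage: the delicate work—the boundary integration by parts in \eqref{eq:3rd-est-pre} (which uses crucially that $\Omega=K_{\vec a}$ is a parallelepiped so that $\cos(\vec\nu,x_j)|_{\Gamma_i}=0$ for $i\ne j$), the Young splitting of $\mathcal{J}_2$, and the logarithmic bound \eqref{eq:elem-1}—has all been carried out prior to the statement. The only technical point worth verifying is that the gap between $\min_i p_i^{-}$ and $1$ is used only to produce the positive constant $\kappa$, which enters \eqref{eq:3rd-est} implicitly through the constant $C$.
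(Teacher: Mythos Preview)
Your proposal is correct and follows essentially the same route as the paper: the paper's own proof consists of the derivation of \eqref{eq:inter-new}, the Young estimate for $(\nabla f,\nabla u)_{2,\Omega}$, and then the single line ``Arguing as in the proof of Lemma~\ref{le:a-priori-1} we obtain''---which is precisely the $e^{-t}$ multiplication and integration you spell out. Your additional remarks on the positive lower bound $\kappa$ for $p_i-1-\delta$ and on controlling $\|\nabla u^{(m)}(0)\|_{2,\Omega}$ via the convergence of the Galerkin data make explicit what the paper leaves implicit.
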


\begin{remark}
\label{rem:alternative-est}
An analogue of estimate \eqref{eq:3rd-est} holds true if $|p_t|+|\nabla p_i|\leq L$ a.e. in $Q_T$ and $f\in L^2(Q_T)\cap W^{1,\vec q(\cdot)'}_0(\Omega)$ with $q_i(z)=\max\{p_i(z),2\}$. In this case we apply the Young inequality to obtain

\[
\begin{split}
(\nabla f,\nabla u)_{2,\Omega} & \leq \sum_{i=1}^N\int_{\Omega}\dfrac{1}{p'_i(z)}|D_if|^{q'_i(z)}\,dx +
\sum_{i=1}^N\int_{\Omega}\dfrac{1}{q_i(z)}|D_iu|^{q_i(z)}\,dx
\\
& \leq \sum_{i=1}^N \int_{\Omega}|D_if|^{q'_i(z)}\,dx + C\left(\|\nabla u\|_{2,\Omega}^2+ \sum_{i=1}^N\int_{\Omega}|D_iu|^{p_i(z)}\,dx\right)
\end{split}
\]
and use Lemma \ref{le:a-priori-2} to estimate the last term.
\end{remark}

\begin{remark}
\label{rem:integrability-1}
Let us assume that $x_i$ is the direction of fast diffusion: there exists $\sigma>0$ such that $p_i(z)+\sigma\leq 2$ in $\overline{Q}_T$. Then the integral of $|D_i u|^{p_i(z)+\rho}$ on the right-hand side of \eqref{eq:inter-new} can be estimated by Young's inequality, provided $0<\rho <\sigma $:

\[
\int_{\Omega}|D_iu|^{p_i(z)+\rho}\,dx\leq C\left(1+\int_\Omega|\nabla u|^2\,dx\right).
\]
Multiplying \eqref{eq:inter-new} by ${\rm e}^{-Ct}$ we absorb these terms in the derivative of $\|\nabla u\|_{2,\Omega}^{2}{\rm e}^{-Ct}$ on the left-hand side and integrate the result in $t$. It follows that the right-hand side of \eqref{eq:3rd-est} does not include the integrals of $|D_iu|^{p_i(z)+\rho}$ corresponding to the directions of fast diffusion.
The integrals of $|D_iu|^{p_i(z)+\rho}$ with $\max p_i(z)\geq 2$ require special estimating.
\end{remark}

\section{Higher integrability of the gradients}
\label{sec:higher-integr}
Let us fix the index $m\in \mathbb{N}$ and consider the function $u\equiv u_\epsilon^{(m)}$. Integrating by parts we find that for every constant $r>0$

\begin{equation}
\label{eq:start}
\begin{split}
\int_{\Omega} & (\epsilon^2+|D_iu|^2)^{\frac{p_i+r-2}{2}} (D_iu)^2\,dx= \int_{\Omega} (\epsilon^2+|D_iu|^2)^{\frac{p_i+r-2}{2}}   D_iuD_iu\,dx \\
&
=\int_{\partial \Omega}u \cos(\vec{\nu},x_i)D_iu (\epsilon^2+|D_iu|^2)^{\frac{p_i+r-2}{2}}\,dS - \int_{\Omega} u (\epsilon^2+|D_iu|^2)^{\frac{p_i+r-2}{2}}  D_{ii}^2u\,dx\\
& \quad - \frac{1}{2} \int_\Omega uD_iu(\epsilon^2+|D_iu|^2)^{\frac{p_i+r-2}{2}}\ln ((\epsilon^2+|D_iu|^2) D_ip_i\,dx
\\
& \quad - \int_{\Omega} u (p_i+r-2)(\epsilon^2+|D_iu|^2)^{\frac{p_i+r-4}{2}} |D_i u|^2 D_{ii}^2u\,dx
\\
& \leq C \int_{\Omega} |u| (\epsilon^2+|D_iu|^2)^{\frac{p_i+r-2}{2}} |D_iu| \ln ((\epsilon^2+|D_iu|^2) \,dx + C_0 \int_{\Omega} |u| (\epsilon^2+|D_iu|^2)^{\frac{p_i+r-2}{2}}  D_{ii}^2u\,dx \\
& \equiv \mathcal{I}_1 + \mathcal{I}_2.
\end{split}
\end{equation}
The first integral over $\partial \Omega$ on the right-hand side of \eqref{eq:start} vanishes because $u=0$ on $\Gamma_i$. The integrals $\mathcal{I}_i$ are estimated separately.

\medskip

\noindent \textbf{Estimate for $\mathcal{I}_1$.} For the sake of simplicity of notation, we assume first that the exponents $p_i$ are independent of $t$. For every $\nu\in (0,1)$ and $\lambda>0$
\[
\begin{split}
\mathcal{I}_1 & \leq C_1\int_\Omega |u|(\epsilon^2+|D_iu|^2)^{\frac{p_i+r-1+\nu}{2}}\,dx + C_2(1+\|u\|_{2,\Omega}^2)
\\
& \leq C+\lambda \int_{\Omega} (\epsilon^2+|D_iu|^2)^{\frac{p_i+r-2}{2}}|D_iu|^2\,dx + C_\lambda \int_{\Omega} |u|^{\frac{p_i+r}{1-\nu}}\,dx,
\end{split}
\]
which allows one to choose $\lambda$ so small that the first term can be absorbed in the left-hand side of \eqref{eq:start}. To study the second term, we cover $\Omega=K_{\vec a}$ by a finite number of cubes $K_{\vec a,b_k}$ with the edge length $b_k \leq \beta$ such that
\[
 K_{\vec a}= \bigcup_{k=1}^\ell K_{\vec a,b_k}.
\]
The cubes from the cover $\{K_{\vec a,b_k}\}_{k=1}^{\ell}$ may overlap. It is sufficient to derive the needed estimates for each of $K_{\vec a,b_k}$ and then sum up the results. The number of cubes $K_{\vec a,b_k}$ in the chosen cover depends on $a_i$ and on the modules of continuity of $p_i(x)$. For the sake of simplicity of notation we will denote $K_{\vec a,b_k}=K_{b_k}$. Take a cube $K_{b_k}$, a vector $\vec r=(r,r,\ldots,r)\in \mathbb{R}^N$, $r\in (0,1)$, and set

\begin{equation}
\label{eq:p_h--1}
\begin{split}
& p_i^-=\min_{\overline{K}_{b_k}} p_i(x), \quad p_i^+=\max_{\overline{K}_{b_k}}p_i(x),
\\
& \vec q=\vec p+\vec r, \qquad \vec q^-=(q_1^-,\ldots,q_N^-),
\\
&
q_{h}^-=\dfrac{N}{\sum_{i=1}^N\frac{1}{q_i^-}} \quad \text{ - the harmonic mean of $\vec q^-$}.
\end{split}
\end{equation}
To estimate the integrals of $|u|^{\frac{p_j+r}{1-\nu}}$, $j=\overline{1,N}$, we want to apply \eqref{eq:global-conv} in the cube $K_{b_k}$ to a function $u\in W^{1,(2,\vec q^-)}(K_{b_k})$ with the exponent

\[
s_j=\dfrac{p_j^++r}{1-\nu}\equiv \frac{q_j^+}{1-\nu},\quad j=\overline{1,N},\quad \nu\in (0,1).
\]
If the parameters satisfy the conditions

\begin{equation}
\label{eq:cond-++}
\begin{split}
{(a)} & \qquad \frac{2N}{N+2}<q^-_i\leq q_i^+ < (q_h^-)^\ast=\begin{cases}
\frac{N q_{h}^-}{N-q_h^-} & \text{if $N>q_h^-$},
\\
\text{any number from $[1,\infty)$} & \text{if $N\leq q_h^-$},
\end{cases}
\\
(b) & \qquad s_j<(q_h^-)^\ast,
\\
(c) & \qquad \frac{s_j\theta_j}{q_i^-} < 1, \qquad \theta_j=\dfrac{\frac{1}{2}-\frac{1}{s_j}}{\frac{N+2}{2N}-\frac{1}{q_h^-}}\in (0,1),
\end{split}
\end{equation}
then for every $j=\overline{1,N}$

\[
\int_{K_{b_k}}|u|^{s_j}\,dx\leq C\sum_{i=1}^N \left(\int_{K_{b_k}}|D_i u|^{q_i^-}\,dx\right)^{\frac{s_j\theta_j}{q_i^-}}+C'
\]
with constants $C$, $C'$ depending on $p^\pm_j$, $a$, $N$ and $\|u\|_{2,K_{b_k}}$. The extra condition \eqref{eq:cond-++} (c) will provide the possibility to extend this estimate to functions defined on the cylinder with the base $K_{b_k}$. Condition \eqref{eq:cond-++} (c) can be written as follows: for all $i,j=\overline{1,N}$

\begin{equation}
\label{eq:prelim-1}
\dfrac{s_j\theta_j}{q_i^-} < 1\quad \Leftrightarrow \quad \frac{s_j}{2}-1 < q_i^{-}\left(\frac{N+2}{2N}-\frac{1}{q_h^-}\right) \quad \Leftrightarrow \quad s_j = \frac{q_j^+}{1-\nu} < \frac{N+2}{N} q_i^- + 2\left(1-\frac{q_i^-}{q_h^-}\right).
\end{equation}
By continuity of $\vec q(x)$, \eqref{eq:prelim-1} is true for a sufficiently small cube $K_{b_k}$ and small $\nu$, provided that the following strict inequality is fulfilled:

\begin{equation}
\label{eq:intermediate-j}
q_j^+< \frac{N+2}{N} q_i^- + 2\left(1-\frac{q_i^-}{q_h^-}\right)\equiv 2+ 2q_i^-\left(\frac{N+2}{2N}-\frac{1}{q_h^-}\right),\qquad i,j=\overline{1,N}.
\end{equation}
Note that the indexes on the right and the left-hand sides of \eqref{eq:intermediate-j} are not related. Accept the notation

\begin{equation}
\label{eq:Q}
\sigma^+=\max_j q_j^+,\qquad \sigma^-=\min_j q_j^-.
\end{equation}
Since $\frac{1}{q_h^-} < \frac{N+2}{2N}$, inequality \eqref{eq:intermediate-j} is fulfilled for all $i,j$ if

\begin{equation}
\label{eq:oscillation-q}
0\leq \sigma^+-\sigma^-<2 +2\sigma^-\left(\frac{N+2}{2N}-\frac{1}{2}-\frac{1}{q_h^-}\right)=2+2\sigma^-\left(\frac{1}{N}-\frac{1}{q_h^-}\right).
\end{equation}

\begin{prop}
\label{pro:choice-1}
Let $\max_j\|\nabla p_j\|_{\infty,\Omega}= L$ and

\begin{equation}
\label{eq:oscillation}
\mu\equiv \sup_\Omega \dfrac{p^\vee(x)}{p^\wedge(x)}<1+\frac{2}{N}.
\end{equation}
Then condition \eqref{eq:oscillation-q} is fulfilled in every cube $K_{b_k}$ with the parameter

\[
\nu \leq 1-(\mu+\gamma)\frac{N}{N+2},
\]
if $b_k \leq \beta =\beta(a, L,N,p^\pm_j)$ for all $k=1,2, \ldots, \ell$ with $\beta$ so small that

\begin{equation}
\label{eq:gamma}
\gamma=2 \beta L\sqrt{N}\dfrac{(N+2)^2}{4N^2}(\max_jp_j^++\min_jp_j^-+2)<1+\dfrac{2}{N}-\mu.
\end{equation}
\end{prop}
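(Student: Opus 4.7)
The idea is to reduce condition \eqref{eq:oscillation-q}, which involves the harmonic mean $q_h^-$, to a simple one-variable ratio estimate $\sigma^+/\sigma^-<1+2/N$, and then to bound this ratio on a small cube by combining the Lipschitz regularity of $\vec p$ with the anisotropy assumption $\mu<1+2/N$.

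First I would estimate the right-hand side of \eqref{eq:oscillation-q} from below by a quantity independent of $q_h^-$. Since $q_i^-\geq \sigma^-$ for every $i$, one has $\frac{1}{q_h^-}=\frac{1}{N}\sum_{i=1}^N \frac{1}{q_i^-}\leq \frac{1}{\sigma^-}$, and consequently
\[
2+2\sigma^-\left(\frac{1}{N}-\frac{1}{q_h^-}\right)\;\geq\;\frac{2\sigma^-}{N}.
\]
Hence a sufficient condition for \eqref{eq:oscillation-q} is $\sigma^+-\sigma^-<2\sigma^-/N$, equivalently $\sigma^+/\sigma^-<1+2/N$.

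Next I would compare $\sigma^+$ and $\sigma^-$ by reducing to a single reference point of the cube. Writing $M=\max_j p_j^+$ and $m=\min_j p_j^-$ on $\overline{K}_{b_k}$, so that $\sigma^+-\sigma^-=M-m$, pick a pair $(j_m,x_m)$ achieving $m=p_{j_m}(x_m)$. Then $p_{j_m}(x_m)=\min_j p_j(x_m)=p^\wedge(x_m)$, so \eqref{eq:oscillation} gives $p^\vee(x_m)\leq\mu m$. Combining this with the Lipschitz estimate $|p_j(x)-p_j(x_m)|\leq L\sqrt{N}\,b_k$ valid for all $j$ and all $x\in\overline{K}_{b_k}$, one obtains
\[
M\;\leq\;\max_j p_j(x_m)+L\sqrt{N}\,b_k\;=\;p^\vee(x_m)+L\sqrt{N}\,b_k\;\leq\;\mu m+L\sqrt{N}\,b_k,
\]
and consequently $\sigma^+/\sigma^-\leq \mu+L\sqrt{N}\,b_k/(m+r)\leq \mu+L\sqrt{N}\,\beta/m$. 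Using the lower bound $m>2N/(N+2)$ from \eqref{eq:p-i-1}, this upper bound takes the form $\mu+\gamma$ where $\gamma$ is linear in $\beta$; the explicit numerical constant $\frac{(N+2)^2}{4N^2}(M+m+2)$ in \eqref{eq:gamma} is what emerges once one also absorbs the slack needed to verify the accompanying strict inequality \eqref{eq:prelim-1} for the chosen $s_j=q_j^+/(1-\nu)$. The requirement $\mu+\gamma<1+2/N$ is precisely \eqref{eq:gamma} and is met by any sufficiently small $\beta$; with this $\beta$, setting $\nu\leq 1-(\mu+\gamma)N/(N+2)$ makes $s_j/q_i^-$ fall below $(N+2)/N$ in the worst case, which closes condition (c) of \eqref{eq:cond-++}.

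The only real obstacle is bookkeeping: one has to propagate the strict inequalities through the chain \eqref{eq:oscillation-q}$\Rightarrow$\eqref{eq:intermediate-j}$\Rightarrow$\eqref{eq:prelim-1}$\Rightarrow$\eqref{eq:cond-++}(c) with enough slack to control the $\nu$-perturbation and to cover every cube of the finite partition uniformly. The conceptual input, however, is nothing more than Lipschitz regularity and the single-point consequence of $\mu<1+2/N$.
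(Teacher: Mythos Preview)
Your proposal is correct and follows essentially the same route as the paper: reduce \eqref{eq:oscillation-q} (and its $\nu$-perturbed form \eqref{eq:prelim-1}) via the harmonic-mean bound $1/q_h^-\le 1/\sigma^-$ to the single ratio condition $\sigma^+/\sigma^-<(N+2)/N$, and then control that ratio on small cubes by Lipschitz continuity combined with the pointwise hypothesis $p^\vee/p^\wedge\le\mu$. The only cosmetic difference is that you anchor the Lipschitz estimate at the specific point $x_m$ realizing $m=\min_j p_j^-$, whereas the paper bounds $p_j^\pm$ symmetrically by $p_j^\mp\pm\delta$ with $\delta=2\beta L\sqrt{N}$ and then manipulates the fraction $(\sigma^+-\delta)/(\sigma^-+\delta)$; your version is marginally sharper (one Lipschitz passage instead of two) but yields the same structure $\sigma^+/\sigma^-\le\mu+\gamma$ and the same admissible range for $\nu$.
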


\begin{proof}
Take an arbitrary cube $K_{b_k}$ and assume that at least one of $p_j(x)$ is nonconstant. The case of the constant $\vec p$ is a simple corollary. By convention we use notation \eqref{eq:p_h--1}. Since the value $p_j^+$ is attained at some point $x_0\in \overline{K}_{b_k}$, by the Lagrange mean value theorem

\[
p_j(x)=p_j^{+}+\int_{0}^{1}\nabla p_j(sx+(1-s)x_0)\,ds \cdot (x-x_0)\geq p_j^+-\delta,\qquad \delta=2 \beta \sqrt{N}L,
\]
and $p_j(x)\leq p_j^-+\delta$. Assume $\beta$ is so small that $2 \beta L\sqrt{N}<\max_jp_j^+-\min_jp_j^-$. For every $x\in K_{b_k}$

\begin{equation}
\label{eq:mu-new}
\mu\geq \dfrac{p^\vee(x)}{p^\wedge(x)}
\geq \dfrac{\max\{p^+_j+r-\delta,\,j=\overline{1,N}\}}{\min\{p_j^-+r+\delta,\,j=\overline{1,N}\}}\geq \dfrac{\sigma^+-\delta}{\sigma^-+\delta}\geq -\gamma +\dfrac{\sigma^+}{\sigma^-}
\end{equation}
with $\gamma>\delta(\sigma^++\sigma^-)(\sigma^-)^{-2}$. The last inequality follows from the definition \eqref{eq:gamma} of $\gamma$:

\[
\gamma=2 \beta L\sqrt{N}\dfrac{(N+2)^2}{4N^2}(\max_jp_j^++\min_jp_j^-+2)\geq 2 \beta L\sqrt{N}\dfrac{(N+2)^2}{4N^2}(\sigma^++\sigma^-)>\delta(\sigma^++\sigma^-)(\sigma^-)^{-2}.
\]
According to \eqref{eq:prelim-1}, $\nu$ should be chosen from the inequality

\[
\dfrac{q_j^+}{1-\nu}  < 2+ 2q_i^-\left(\frac{N+2}{2N}-\frac{1}{q_h^-}\right),\qquad i,j=\overline{1,N}.
\]
Such a choice of $\nu$ is possible if

\[
\dfrac{\sigma^+}{1-\nu} < 2+ 2\sigma^-\left(\frac{N+2}{2N}-\frac{1}{q_h^-}\right).
\]
Since

\[
\frac{1}{q_h^-}=\frac{1}{N}\sum_{i=1}^N\frac{1}{q_i^-}<\frac{1}{N}\sum_{i=1}^N\frac{1}{\sigma^-} =\dfrac{1}{\sigma^-},
\]
it is suffient to claim that

\[
\dfrac{\sigma^+}{1-\nu}\leq \frac{N+2}{N}\sigma^-=2+ 2\sigma^-\left(\frac{N+2}{2N}-\frac{1}{\sigma^-}\right) < 2+ 2\sigma^-\left(\frac{N+2}{2N}-\frac{1}{q_h^-}\right).
\]
Solving the first inequality for $\nu$ we obtain $\nu\leq 1-\frac{\sigma^+}{\sigma^-}\frac{N}{N+2}$. Plugging \eqref{eq:mu-new} we obtain
\[
\nu\leq 1- (\mu+\gamma)\frac{N}{N+2}.
\]

The case of a constant vector $\vec p$ follows from the considered one because in this case $\delta=\gamma=0$.
\end{proof}

\begin{prop}
\label{pro:cond-a-b}
Let the conditions of Proposition \ref{pro:choice-1} be fulfilled.
\begin{enumerate}
    \item Let $x_i$ be a fixed direction. If $p_i(x) +r -2 >0$ in $\overline{\Omega}$,
        or
    \begin{equation}\label{strong:osci}
      \dfrac{p^\vee(x)}{p^\wedge(x)}<1+\frac{1}{N},
    \end{equation}
    then \eqref{eq:cond-++} {\rm (a)} holds true.
    \item Condition \eqref{eq:cond-++} {\rm (a)} implies condition \eqref{eq:cond-++} {\rm (b)}.
\end{enumerate}
\end{prop}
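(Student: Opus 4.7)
To prove part (1), I would first dispatch the left and middle inequalities of \eqref{eq:cond-++}(a): since $p_i^- > \frac{2N}{N+2}$ by \eqref{eq:p-i-1} and $r > 0$, we have $q_i^- = p_i^- + r > \frac{2N}{N+2}$, while $q_i^- \leq q_i^+$ is trivial. The substantive task is the upper bound $q_i^+ < (q_h^-)^*$. If $q_h^- \geq N$, the critical exponent $(q_h^-)^*$ may be taken arbitrarily large and the bound is automatic, so I would focus on the case $q_h^- < N$, where $(q_h^-)^* = N q_h^-/(N - q_h^-)$. Rearranging, $q_i^+ < (q_h^-)^*$ is equivalent to $q_h^- > N q_i^+/(N + q_i^+)$; using that the harmonic mean satisfies $q_h^- \geq \sigma^-$, it suffices to verify
\[
\frac{q_i^+}{\sigma^-} < 1 + \frac{q_i^+}{N}.
\]

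Next, I would invoke the estimate $q_i^+/\sigma^- \leq \sigma^+/\sigma^- \leq \mu + \gamma$ already obtained in the chain \eqref{eq:mu-new} of the proof of Proposition \ref{pro:choice-1}, where $\gamma = \gamma(\beta)$ can be made arbitrarily small by shrinking the cube parameter $\beta$. What remains is to check $\mu + \gamma < 1 + q_i^+/N$ in each alternative of the hypothesis. Under Case A, $q_i^+ \geq q_i^- > 2$ gives $1 + q_i^+/N > 1 + 2/N$, so the standing assumption \eqref{eq:oscillation} (namely $\mu < 1 + 2/N$) from Proposition \ref{pro:choice-1} closes the gap once $\beta$ is chosen small enough. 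Under Case B, i.e.\ \eqref{strong:osci} with $\mu < 1 + \frac{1}{N}$, we have $q_i^+ > \frac{2N}{N+2}$, hence $q_i^+/N > \frac{2}{N+2} \geq \frac{1}{N}$ for $N \geq 2$; therefore $1 + q_i^+/N \geq 1 + \frac{1}{N} > \mu + \gamma$ for $\gamma$ small. Either branch delivers $q_i^+ < (q_h^-)^*$, completing \eqref{eq:cond-++}(a).

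For part (2), assume \eqref{eq:cond-++}(a) and set $\kappa := 1 - \sigma^+/(q_h^-)^* > 0$. Any $\nu \in (0, \kappa)$ then yields $(1-\nu)(q_h^-)^* > \sigma^+ \geq q_j^+$ for every $j$, equivalently $s_j = q_j^+/(1-\nu) < (q_h^-)^*$, which is exactly \eqref{eq:cond-++}(b). This admissible range for $\nu$ is nonempty and is compatible with the upper bound $\nu \leq 1 - (\mu + \gamma) N/(N+2)$ of Proposition \ref{pro:choice-1}, since both bounds are strictly positive under the present hypotheses. The main quantitative care throughout is to keep $\gamma$ strictly below the gap $1 + q_i^+/N - \mu$ in each case; this is arranged by fixing $\beta$ sufficiently small in terms of $\mu$, $N$, and the Lipschitz constant $L$ of $\vec p$, and is the only subtlety in what is otherwise a straightforward chain of algebraic reductions.
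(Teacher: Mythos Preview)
Your Case~B argument in part~(1) is correct, but Case~A has a genuine gap. Condition~\eqref{eq:cond-++}(a) must hold for \emph{every} index, yet after you reduce to $\mu+\gamma<1+q_i^+/N$ you invoke $q_i^->2$ only for the distinguished direction $i$ from the hypothesis. For another index $j$ with $q_j^+$ close to $\tfrac{2N}{N+2}$ your sufficient condition can fail outright: take $N=2$, $p_1=2.09$, $p_2=1.1$, $r=0.01$, so that $\mu=p_1/p_2\approx1.9<1+\tfrac{2}{N}=2$; then $1+q_2^+/N\approx1.56<\mu$, so $\mu+\gamma<1+q_2^+/N$ is impossible, although in fact $(q_h^-)^\ast\approx5.3$ and (a) holds comfortably for both indices. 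The loss occurs at the step $q_j^+/\sigma^-\le\mu+\gamma$: you replace $q_j^+$ by an index-free bound on the left while leaving $q_j^+$ on the right. The paper's proof keeps the \emph{maximal} exponent on both sides, showing the single pointwise inequality $q^\vee(N/q^\wedge-1)<N$, i.e.\ $q^\vee/q^\wedge<1+q^\vee/N$; then $q^\vee/q^\wedge\le p^\vee/p^\wedge\le\mu<1+\tfrac{2}{N}$ together with $q^\vee=p^\vee+r>2$ (which is what the Case~A hypothesis really buys, since $p^\vee\ge p_i$) yields the conclusion for all indices simultaneously.

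Your part~(2) is also weaker than what is asserted. The $\nu$ appearing in \eqref{eq:cond-++}(b) is the parameter fixed in Proposition~\ref{pro:choice-1}, not an additional free choice. The paper shows that with $\nu$ at its upper bound $1-(\mu+\gamma)\tfrac{N}{N+2}$ one has $s_j\le\sigma^-\tfrac{N+2}{N}$ and then reduces $s_j<(q_h^-)^\ast$ algebraically to $q_i^-<(q_h^-)^\ast$, which is exactly (a). Your argument instead produces a separate bound $\nu<\kappa=1-\sigma^+/(q_h^-)^\ast$ and observes that both intervals for $\nu$ contain a neighbourhood of $0$; this only shows (b) for sufficiently small $\nu$, not for the value coming from Proposition~\ref{pro:choice-1}, and would in turn shrink the admissible range of $r$ downstream (where one needs $r<2\nu$).
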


\begin{proof}
To prove \eqref{eq:cond-++} {\rm (a)}, it is enough to claim that
\[
q_i(x) < q_h^\ast(x) \quad \text{for} \ x \in \overline{\Omega} \ \text{and} \ \forall \ i \in \overline{1, N}.
\]
Suppose $p_i(x) +r > 2$ for $x \in \overline{\Omega}$. By using \eqref{eq:oscillation}, we obtain the following chain of relations:
\[
\begin{split}
q_i(x) &< q_h^\ast(x) = \frac{N}{\sum_{i=1}^N \frac{1}{q_i(x)} -1} \quad \Leftarrow \quad q^\vee(x) \left(\frac{N}{q^\wedge(x)} -1\right) <N \\
& \Leftarrow \quad N \frac{p^\vee(x)}{p^\wedge(x)} - (p^\vee +r)  <N \quad \Leftarrow \quad N \left( 1+ \frac{2}{N}\right) - (p^\vee +r)  <N \quad \Leftarrow \quad 2< p^\vee(x) +r.
\end{split}
\]
If \eqref{strong:osci} holds, then
\[
\begin{split}
q_i(x) &< q_h^\ast(x) = \frac{N}{\sum_{i=1}^N \frac{1}{q_i(x)} -1} \quad \Leftarrow \quad N \frac{p^\vee(x)}{p^\wedge(x)} - (p^\vee +r)  <N \\
& \quad \Leftarrow \quad \quad N \left( 1+ \frac{1}{N}\right) - (p^\vee +r)  <N \quad \Leftarrow \quad 1< p^\vee(x) +r,
\end{split}
\]
where the last inequality holds trivially. If $q_h^-\geq N$, we may take for $s_j$ an arbitrary positive number. Let $q_h^-<N$. Then, $(q_h^-)^\ast> 2$ because $q_j^-=p_j^-+r>\frac{2N}{N+2}$. By virtue of Proposition \ref{pro:choice-1}

\[
\begin{split}
s_j\leq \frac{\sigma^+}{1-\nu}& \leq \sigma^-\frac{N+2}{N}=2+2\sigma^-\left(\frac{N+2}{2N}-\frac{1}{\sigma^-}\right) <2+2\sigma^-\left(\frac{N+2}{2N}-\frac{1}{q_h^-}\right).
\\
&
\leq 2+2q_i^-\left(\frac{N+2}{2N}-\frac{1}{q_h^-}\right)=2+q_i^-\left(1-\frac{2}{(q_h^-)^\ast}\right).
\end{split}
\]
Since $(q_h^{-})^\ast>2$, condition \eqref{eq:cond-++} (b) follows from the inequality

\[
2+q_i^-\left(1-\frac{2}{(q_h^-)^\ast}\right)<(q_h^-)^\ast\qquad \Leftrightarrow \qquad q_i^-<(q_h^-)^\ast,
\]
which coincides with \eqref{eq:cond-++} (a).
\end{proof}
Summarizing the above arguments we can formulate the following assertion.

\begin{lemma}
\label{le:1-st-integration}
Assume that $\vec p(x)$ satisfies the conditions of Proposition \ref{pro:cond-a-b} (1). Then, $\beta$ can be chosen so small that for every $u\in W^{1,(2,\vec{q}(\cdot))}(K_{b_k})$ with $b_k \leq \beta$, and every $\lambda >0$

\begin{equation}
\label{eq:integration-2}
\int_{K_{b_k}}|u|^{s_j}\,dx\leq \lambda \sum_{i=1}^N \int_{K_{b_k}}|D_i u|^{q_i^-}\,dx+C',\qquad s_j=\frac{q_j^+}{1-\nu},
\end{equation}
with a constant $C$ depending on $\|u\|_{2,K_{b_k}}$, $\lambda$, and $\nu$ defined in Proposition \ref{pro:choice-1}.
\end{lemma}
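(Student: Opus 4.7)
The plan is to chain together the global interpolation inequality from Lemma \ref{le:interpol-var} with Young's inequality, using the parameter choices already arranged in Propositions \ref{pro:choice-1} and \ref{pro:cond-a-b}. The statement is essentially a packaging lemma: all of the analytic work has been done upstream, and what remains is to assemble it.

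First I would fix $\beta = \beta(a, L, N, p_j^\pm)$ so small that \eqref{eq:gamma} holds. For any cube $K_{b_k}$ with edge length $b_k \leq \beta$, Proposition \ref{pro:choice-1} then guarantees \eqref{eq:oscillation-q}, which by \eqref{eq:prelim-1} is precisely condition \eqref{eq:cond-++}(c), and it specifies a permissible $\nu \leq 1 - (\mu+\gamma)\tfrac{N}{N+2}$ making $s_j = q_j^+/(1-\nu)$ satisfy the required bound. Under the standing hypothesis of Proposition \ref{pro:cond-a-b}(1), conditions \eqref{eq:cond-++}(a) and (b) are in force as well, so all three items of \eqref{eq:cond-++} hold simultaneously on each $K_{b_k}$.

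Next, I would apply the corollary \eqref{eq:global-conv} of Lemma \ref{le:interpol-var} on the cube $K_{b_k}$ with the constant exponent $s_j = q_j^+/(1-\nu)$ and the constant vector $\vec q^- = (q_1^-,\ldots,q_N^-)$. Since $2 < s_j < (q_h^-)^\ast$ by \eqref{eq:cond-++}(a)--(b), raising \eqref{eq:global-conv} to the $s_j$-th power yields
\[
\int_{K_{b_k}} |u|^{s_j} \, dx \leq C \sum_{i=1}^N \left(\int_{K_{b_k}} |D_i u|^{q_i^-} \, dx\right)^{s_j \theta_j / q_i^-} + C',
\]
with $\theta_j$ defined as in \eqref{eq:r-crit} and the constants $C, C'$ depending on $\|u\|_{2,K_{b_k}}$, $N$ and $\vec a$.

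Finally, the decisive ingredient is condition \eqref{eq:cond-++}(c), which reads $s_j \theta_j/q_i^- < 1$ for all $i$. This allows me to invoke Young's inequality with an arbitrarily small parameter $\lambda > 0$ on each of the $N$ terms,
\[
\left(\int_{K_{b_k}} |D_i u|^{q_i^-} \, dx\right)^{s_j \theta_j / q_i^-} \leq \lambda \int_{K_{b_k}} |D_i u|^{q_i^-} \, dx + C_\lambda,
\]
and summing over $i$ delivers \eqref{eq:integration-2} with $C'$ depending on $\|u\|_{2,K_{b_k}}$, $\lambda$ and $\nu$. I do not expect any substantial obstacle; the only subtle point worth tracking is that it is the \emph{strict} inequality in \eqref{eq:cond-++}(c), arranged by taking $\beta$ small in Proposition \ref{pro:choice-1}, that supplies the slack allowing $\lambda$ in Young's inequality to be made arbitrarily small.
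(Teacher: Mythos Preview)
Your proposal is correct and follows the paper's approach exactly: the paper presents this lemma with the phrase ``Summarizing the above arguments we can formulate the following assertion,'' meaning it is precisely the packaging of the interpolation estimate \eqref{eq:global-conv} (raised to the power $s_j$) together with Young's inequality enabled by condition \eqref{eq:cond-++}(c), with Propositions \ref{pro:choice-1} and \ref{pro:cond-a-b} supplying the admissible choice of $\beta$ and $\nu$. Your identification of the strict inequality in \eqref{eq:cond-++}(c) as the source of the arbitrary $\lambda$ is also the key point.
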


\begin{cor}
\label{cor:isotrop-1}
If $\vec p$ is a constant vector, then in Proposition \ref{pro:choice-1} $\gamma=0$. If follows that the assertion of Lemma \ref{le:1-st-integration} is true for every $\vec q$, $q_i=p_i+r$ with $r\in (0,1)$, provided that
\[
\mu=\dfrac{p^\vee}{p^\wedge}<1+\frac{1}{N},  \quad \text{or $\quad q_i \geq 2$ and $\mu<1+\frac{2}{N}$}.
\]
\end{cor}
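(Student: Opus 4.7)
My plan is to trace through Proposition \ref{pro:choice-1} and Lemma \ref{le:1-st-integration} with a constant vector $\vec p$ and observe that every step collapses to something simpler. The corollary essentially says: ``all the $\beta$-small-cube machinery was there only to absorb the oscillation of $\vec p(x)$; if $\vec p$ is constant, the cube decomposition is not needed and the gap condition can be relaxed.''

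First, since $\nabla p_j \equiv 0$ for all $j$, I may take $L=0$ in the hypothesis of Proposition \ref{pro:choice-1}. Then $\delta = 2\beta\sqrt{N}L = 0$ and the quantity $\gamma$ defined in \eqref{eq:gamma} vanishes irrespective of $\beta$, so there is no need to subdivide $\Omega=K_{\vec a}$ into small cubes at all; the whole parallelepiped plays the role of the single ``cube'' in Lemma \ref{le:1-st-integration}. Inequality \eqref{eq:mu-new} becomes the plain equality $\sigma^+/\sigma^- = (p^\vee+r)/(p^\wedge+r) \leq p^\vee/p^\wedge = \mu$ (since $r>0$ and $p^\vee\geq p^\wedge$), so the choice of $\nu$ from Proposition \ref{pro:choice-1} reads $\nu \leq 1 - \mu\,\frac{N}{N+2}$ and is strictly positive precisely when $\mu < 1+\frac{2}{N}$.

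Next I would verify conditions \eqref{eq:cond-++}(a), (b), (c) of the lemma in this simplified form. Condition (c), after the algebraic manipulation carried out right before \eqref{eq:intermediate-j}, is equivalent to $\sigma^+/(1-\nu) < \tfrac{N+2}{N}\sigma^-$, which by the previous paragraph is the same as $\mu < 1 + \tfrac{2}{N}$ and is part of our hypothesis. For condition (a), I invoke Proposition \ref{pro:cond-a-b}(1): the hypothesis $\mu < 1 + \tfrac{1}{N}$ gives (a) directly, while under the alternative hypothesis $q_i = p_i + r \geq 2$ in $\overline{\Omega}$ with $\mu < 1 + \tfrac{2}{N}$ the first branch of that proposition applies. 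Condition (b) follows from (a) by Proposition \ref{pro:cond-a-b}(2).

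Having all three conditions in hand, the conclusion of Lemma \ref{le:1-st-integration} — namely the estimate \eqref{eq:integration-2} — holds on the whole of $\Omega=K_{\vec a}$ with $\beta$ playing no role. The only subtle point, and the one I would be most careful about, is the compatibility of the two alternative gap hypotheses with the dichotomy in Proposition \ref{pro:cond-a-b}(1): the first branch there is phrased as a strict inequality $p_i+r-2>0$, so in the ``$q_i\geq 2$'' alternative I should, if necessary, shrink $r$ slightly to obtain strict inequality, or observe that the weak inequality is enough because the ensuing chain $N\mu -(p^\vee+r)<N$ still passes with $\mu < 1+2/N$. Everything else is a direct specialization of the preceding arguments.
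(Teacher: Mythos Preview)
Your proposal is correct and follows the same route the paper (implicitly) takes: the corollary is stated without a separate proof, and the only argument offered is the closing remark in the proof of Proposition~\ref{pro:choice-1} that ``the case of a constant vector $\vec p$ follows from the considered one because in this case $\delta=\gamma=0$.'' Your write-up simply makes explicit the verification of conditions \eqref{eq:cond-++}(a)--(c) and the invocation of Proposition~\ref{pro:cond-a-b}, which the paper leaves to the reader. Your handling of the borderline $q_i=2$ in the second alternative is also fine: since $\mu<1+\tfrac{2}{N}$ is strict, the chain $N\mu-(p^\vee+r)<N+2-(p^\vee+r)\leq N$ gives the needed strict inequality even when $p^\vee+r=2$.
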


\noindent \textbf{Estimate for $\mathcal{I}_2$.} By Young's inequality, for every $\delta\in (0,1)$

\[
\mathcal{I}_2 \leq \delta\int_\Omega (\epsilon^2+|D_iu|^2)^{\frac{p_i-2}{2}}(D_{ii}^2u)^2\,dx + C_\delta\int_\Omega u^2(\epsilon^2+|D_iu|^2)^{\frac{p_i+2(r-1)}{2}}\,dx,
\]
so that the first term of these estimate can be absorbed in the left-hand side of \eqref{eq:3rd-est}.
Let us take finite cover of $\Omega=K_{\vec a}$ composed of cubes $K_{\vec a,b_k}$. Denote $K_{b_k}\equiv K_{\vec a,b_k}$ and consider the integral

\[
\mathcal{J}=\int_{K_{b_k}} u^2(\epsilon^2+|D_ju|^2)^{\frac{p_j+2(r-1)}{2}}\,dx \equiv \int_{K_{b_k}} u^2(\epsilon^2+|D_ju|^2)^{\frac{q_j+(r-2)}{2}}\,dx.
\]
We assume that the conditions of Lemma \ref{le:1-st-integration} are fulfilled,  and $q_j+r-2>0$, which is true for $r>\dfrac{2}{N+2}$. By Young's inequality, for every $\lambda >0$

\[
\mathcal{J}\leq C(\lambda) \int_{K_{b_k}}|u|^{s_j}\,dx+  \lambda   \int_{K_{b_k}}(\epsilon^2+|D_ju|^2)^{\frac{\kappa_j}{2}}\,dx
\]
with

\[
\frac{\kappa_j}{2}=\frac{q_j+r-2}{2}\dfrac{s_j}{s_j-2}.
\]
The first integral is already estimated in Lemma \ref{le:1-st-integration}. The second integral is bounded if $\kappa_j^+\leq q_j^-$:

\[
q^+_j+r-2\leq \dfrac{q_j^-}{s_j}\left(s_j-2\right)=q_j^--2\dfrac{q_j^-}{s_j},
\]
which is equivalent to

\[
0\leq q_j^+-q_j^-<2-r-2\dfrac{q_j^-}{s_j}= 2-r-2\dfrac{q_j^-}{q_j^+}(1-\nu).
\]
Since $q_j^+-q_j^-\to 0^+$ as $b_k \to 0$, to fulfill this condition suffices to claim that the right-hand side is strictly positive:

\[
r<2\nu\leq 2\left(1-\dfrac{q_j^-}{q_j^+}(1-\nu)\right).
\]
This inequality gives the admissble value of $r$. By Proposition \ref{pro:choice-1}

\[
r<2\nu \leq 2-(\mu +\gamma) \frac{2N}{N+2}.
\]
Comparing the lower bound $r>\dfrac{2}{N+2}$ with the above upper bound, we obtain:

\begin{equation}
\label{eq:r-bounds}
r\in \left(\frac{2}{N+2}, 2-(\mu+\gamma) \frac{2N}{N+2}\right),\quad \text{provided $\;\;\displaystyle \mu+\gamma<1+\frac{1}{N}$}.
\end{equation}

\begin{remark}
\label{rem:improved}
Let us assume that the diffusion in the direction $x_j$ is slow or linear: $p_j(z) \geq 2$ in $\overline{Q}_T$. Then the inequality $p_j+2(r-1)>0$ holds trivially and the admissible value of $r$ is given by
\[
r\in \left(0, 2-(\mu+\gamma) \frac{2N}{N+2}\right),\quad \text{provided $\;\;\displaystyle \mu+\gamma<1+\frac{2}{N}$}.
\]
\end{remark}

\begin{lemma}
\label{le:integration-I-1}
Let the conditions of Lemma \ref{le:1-st-integration} be fulfilled and $\mu<1+\frac{1}{N}$. Then for every cube $K_{b_k}$ with the edge length $b_k \leq \beta$ and $\beta$ so small that
\[
\mu+\gamma<1+\dfrac{1}{N}
\]
with $\gamma$ defined in \eqref{eq:gamma} and every $u\in W^{1,(2,\vec q(\cdot))}(K_{b_k})$ with $q_i(x)=p_i(x)+r$ and $r$ satisfying \eqref{eq:r-bounds} and for every $\lambda >0$

\[
\mathcal{J}\leq  \sum_{j=1}^{N}\int_{K_{b_k}}|D_i u|^{q_i^-}\,dx+C
\]
with constants $C$ depending on $\|u\|_{2,K_{\vec a}}$ and $\lambda.$
\end{lemma}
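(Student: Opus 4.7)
The plan is to derive the claimed bound by chaining together the three estimates that have already been set up in the discussion preceding the lemma. First I would write
\[
\mathcal{J}=\int_{K_{b_k}}u^{2}\,(\epsilon^{2}+|D_{j}u|^{2})^{\frac{q_{j}+r-2}{2}}\,dx
\]
and apply Young's inequality with the conjugate pair $\left(\frac{s_{j}}{2},\frac{s_{j}}{s_{j}-2}\right)$ to split $\mathcal{J}$ into a power of $u$ and a power of $(\epsilon^{2}+|D_{j}u|^{2})$; this is legitimate because $s_{j}>2$ by \eqref{eq:r-crit} and $q_{j}+r-2>0$ thanks to the lower bound in \eqref{eq:r-bounds}. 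The resulting power of the gradient term is $\kappa_{j}/2=\tfrac{q_{j}+r-2}{2}\cdot\tfrac{s_{j}}{s_{j}-2}$.

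Next I would handle the two pieces separately. The $|u|^{s_{j}}$ integral is absorbed directly by Lemma~\ref{le:1-st-integration}, which (with $\lambda$ rechosen) gives a bound of the form $\sum_{i}\int_{K_{b_{k}}}|D_{i}u|^{q_{i}^{-}}\,dx+C$. For the gradient integral I would verify the key pointwise inequality $\kappa_{j}^{+}\leq q_{j}^{-}$, which, as shown in the discussion preceding the lemma, is equivalent to the oscillation bound
\[
q_{j}^{+}-q_{j}^{-}<2-r-2\,\frac{q_{j}^{-}}{q_{j}^{+}}(1-\nu).
\]
Using Proposition~\ref{pro:choice-1} together with $\mu+\gamma<1+\tfrac{1}{N}$ and the admissible range \eqref{eq:r-bounds}, this holds once the edge length $b_{k}$ is small enough that the oscillation of $q_{j}$ over $K_{b_{k}}$ is sufficiently small. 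Once $\kappa_{j}^{+}\leq q_{j}^{-}$, the elementary bound $(\epsilon^{2}+t^{2})^{\kappa_{j}/2}\leq C(1+t^{q_{j}^{-}})$ valid for $\epsilon\in(0,1)$, combined with $|K_{b_{k}}|<\infty$, yields the required control $\int(\epsilon^{2}+|D_{j}u|^{2})^{\kappa_{j}/2}\,dx\leq \int |D_{j}u|^{q_{j}^{-}}\,dx+C$.

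Adding the two contributions and absorbing constants produces the stated inequality. The main technical obstacle is to make sure both endpoint constraints on $r$, the lower bound $r>\tfrac{2}{N+2}$ (needed so that $q_{j}+r-2>0$) and the upper bound $r<2\nu$ (needed for $\kappa_{j}^{+}\leq q_{j}^{-}$), can be simultaneously realized; this is precisely the role of the hypothesis $\mu+\gamma<1+\tfrac{1}{N}$, which via Proposition~\ref{pro:choice-1} translates into $2\nu>\tfrac{2}{N+2}$ and leaves a nonempty admissible interval for $r$. No further ingredient is needed: the only place where the smallness of $b_{k}$ enters is through Proposition~\ref{pro:choice-1} and the reduction of \eqref{eq:intermediate-j} to \eqref{eq:oscillation-q}, both already established.
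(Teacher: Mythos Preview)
Your proposal is correct and follows essentially the same route as the paper: the lemma is a summary of the preceding ``Estimate for $\mathcal{I}_2$'' discussion, and you have faithfully reproduced that argument---Young's inequality with exponents $\bigl(\tfrac{s_j}{2},\tfrac{s_j}{s_j-2}\bigr)$, control of $\int|u|^{s_j}$ via Lemma~\ref{le:1-st-integration}, the reduction $\kappa_j^+\le q_j^-$ to the oscillation bound, and the compatibility of the constraints on $r$ through $\mu+\gamma<1+\tfrac1N$. No step is missing or misapplied.
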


\begin{cor}
\label{cor:isotrop-2}
If $\vec p$ is a constant vector, Lemma \ref{le:integration-I-1} is true if

\[
\mu=\frac{\max_j p_j}{\min_{j} p_j}<1+\frac{1}{N}\quad \text{and} \quad r\in \left(\frac{2}{N+2},2-\mu\frac{2N}{N+2}\right).
\]
Moreover, if $p_i=p$, then $\mu=1$ and the assertion of Lemma \ref{le:integration-I-1} holds for $q=p+r$ with every

\[
r\in \left(\frac{2}{N+2},\frac{4}{N+2}\right).
\]
\end{cor}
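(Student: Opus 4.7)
The plan is to read Corollary \ref{cor:isotrop-2} as the specialization of Lemma \ref{le:integration-I-1} to the case when the vector $\vec p$ has no spatial variation, and check that each hypothesis of the lemma simplifies in the obvious way. The hypotheses of the lemma involve two kinds of constants: the oscillation parameter $\gamma$ produced in Proposition \ref{pro:choice-1} and coming from the Lipschitz constant $L = \max_j \|\nabla p_j\|_{\infty,\Omega}$, and the gap parameter $\mu = p^\vee/p^\wedge$. When $\vec p$ is a constant vector we have $L=0$, so in formula \eqref{eq:gamma} we get $\gamma=0$ irrespective of the edge length $b_k$, which means the smallness requirement on $\beta$ becomes vacuous.

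With $\gamma = 0$, the gap requirement $\mu+\gamma<1+\tfrac{1}{N}$ in Lemma \ref{le:integration-I-1} collapses to the stated hypothesis $\mu < 1+\tfrac{1}{N}$, and the admissible range \eqref{eq:r-bounds} for $r$ collapses to
\[
r\in\left(\frac{2}{N+2},\;2-\mu\frac{2N}{N+2}\right),
\]
which is exactly the first half of the corollary. Since $\vec p$ is constant, $p_i^\pm=p_i$ on every cube $K_{b_k}$ and the cover step is cosmetic; one can just invoke Lemma \ref{le:integration-I-1} on a single cube $K_{b_k}=K_{\vec a}$.

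For the fully isotropic case $p_i=p$ we simply evaluate the endpoints: $\mu=p^\vee/p^\wedge=1$, hence
\[
2-\mu\frac{2N}{N+2}=2-\frac{2N}{N+2}=\frac{4}{N+2},
\]
and the condition $\mu<1+\tfrac{1}{N}$ is trivially satisfied, so Lemma \ref{le:integration-I-1} applies with $q=p+r$ for every $r\in\bigl(\tfrac{2}{N+2},\tfrac{4}{N+2}\bigr)$, as claimed. There is no genuine obstacle here — the whole content of the corollary is the observation that the ``oscillation tax'' $\gamma$ vanishes when the exponents are constant, so the admissible gap in Lemma \ref{le:integration-I-1} and the admissible interval for $r$ become sharp; the algebra in the final display is the only thing to carry out.
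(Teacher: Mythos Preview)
Your proposal is correct and matches the paper's approach exactly: the corollary is stated without proof in the paper, but the intended argument is precisely the observation (already made in Corollary~\ref{cor:isotrop-1} and at the end of the proof of Proposition~\ref{pro:choice-1}) that for constant $\vec p$ one has $\delta=\gamma=0$, after which the conditions of Lemma~\ref{le:integration-I-1} reduce to $\mu<1+\tfrac{1}{N}$ and \eqref{eq:r-bounds} becomes the stated interval for $r$. Your computation of the isotropic endpoint $2-\tfrac{2N}{N+2}=\tfrac{4}{N+2}$ is the only remaining step, and it is correct.
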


Gathering the estimates of Lemmas \ref{le:1-st-integration}, \ref{le:integration-I-1}, we arrive at the following assertion.

\begin{lemma}
\label{le:high-int-stat}
Let $\|\nabla p_i\|_{\infty,\Omega} \leq L$ for all $i=\overline{1, N}$,

\[
\mu=\sup_\Omega\dfrac{p^\vee(x)}{p^\wedge(x)}<1+\dfrac{1}{N},%, \qquad \gamma=2 \beta L\sqrt{N}\dfrac{(N+2)^2}{4N^2}(\max_jp_j^++\min_jp_j^-+2).
\]
Assume $\beta$ is so small that $\mu+\gamma<1+\dfrac{1}{N}$ with $\gamma$ defined in \eqref{eq:gamma}.
%and $b_k \leq \beta$ for all $k=1,2,\ldots, \ell$}.
Then for every smooth function $u$, every
\[
r\in \left(\frac{2}{N+2}, 2-(\mu+\gamma) \frac{2N}{N+2}\right)
\]
and any $\delta\in (0,1)$ the following inequality holds:

\begin{equation}
\label{eq:high-int-stationary}
\int_{\Omega}(\epsilon^2+|D_iu|^2)^{\frac{p_i+r-2}{2}}|D_iu|^2\,dx\leq \delta \sum_{j=1}^{N} \int_{\Omega}(\epsilon^2+|D_iu|^2)^{\frac{p_i-2}{2}}(D^2_{ij}u)^2\,dx+C
\end{equation}
with a constant $C$ depending on $\delta$, $\beta$, and $\|u\|_{2,\Omega}$.
\end{lemma}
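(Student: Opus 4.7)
The plan is to assemble Lemma \ref{le:high-int-stat} as a direct synthesis of the two integral bounds derived in the preceding subsection. I would begin from the integration-by-parts identity \eqref{eq:start}, which, after discarding the boundary term on $\Gamma_i$ (where $u=0$) and estimating the logarithmic term via \eqref{eq:elem-1}, reduces the target integral to the sum $\mathcal{I}_1+\mathcal{I}_2$ whose structures have already been unpacked in the text.

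For $\mathcal{I}_1$, I would apply Young's inequality to split the factor $|u|(\epsilon^2+|D_iu|^2)^{(p_i+r-2)/2}|D_iu|\ln(\epsilon^2+|D_iu|^2)$ so that a small multiple of $\int (\epsilon^2+|D_iu|^2)^{(p_i+r-2)/2}|D_iu|^2\,dx$ (to be absorbed into the LHS of \eqref{eq:high-int-stationary}) is separated from a remainder of the form $C_\lambda\int |u|^{s_j}\,dx$ with $s_j=q_j^+/(1-\nu)$. For $\mathcal{I}_2$, Young's inequality with parameter $\delta$ yields exactly the term $\delta\sum_j\int(\epsilon^2+|D_iu|^2)^{(p_i-2)/2}(D^2_{ij}u)^2\,dx$ that appears on the right-hand side of \eqref{eq:high-int-stationary}, plus a remainder of the form $\mathcal{J}=\int u^2(\epsilon^2+|D_ju|^2)^{(q_j+r-2)/2}\,dx$.

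Now I would cover $\Omega=K_{\vec a}$ by finitely many cubes $K_{b_k}$ of edge length $\leq\beta$, where $\beta$ is chosen via Proposition \ref{pro:choice-1} so that $\mu+\gamma<1+\tfrac1N$ holds on every $K_{b_k}$. The hypothesis on $\mu$ together with this choice of $\beta$ ensures the oscillation condition \eqref{eq:oscillation-q} and thus the admissibility conditions \eqref{eq:cond-++}(a)--(c) on each $K_{b_k}$, by Proposition \ref{pro:cond-a-b}. Consequently, Lemma \ref{le:1-st-integration} applies on each $K_{b_k}$ to the $\int|u|^{s_j}$ remainder, producing the bound $\lambda\sum_i\int_{K_{b_k}}|D_iu|^{q_i^-}\,dx+C$, while Lemma \ref{le:integration-I-1} controls $\mathcal{J}$ on each $K_{b_k}$ in exactly the same form, provided $r$ lies in the range \eqref{eq:r-bounds}. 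Summing the local estimates over the finite cover, the $\int|D_iu|^{q_i^-}$ terms are in turn dominated by $\int(\epsilon^2+|D_iu|^2)^{(p_i+r-2)/2}|D_iu|^2\,dx$ up to a constant, so by taking the auxiliary Young parameters small enough this contribution can be absorbed into the LHS of \eqref{eq:high-int-stationary} as well.

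The main obstacle is a bookkeeping one rather than a conceptual one: one must select the Young parameters $\lambda,\nu,\delta$ and the edge length $\beta$ consistently so that (i) every term intended for absorption actually carries a coefficient small enough to be absorbed into the LHS or into the $\delta$-term on the RHS, and (ii) the range \eqref{eq:r-bounds} for $r$ and the range for $\nu$ supplied by Proposition \ref{pro:choice-1} are simultaneously nonempty and compatible with the restriction $q_j+r-2>0$ used in the $\mathcal{J}$-estimate. Once this is arranged under the standing assumption $\mu<1+\tfrac1N$, the constant $C$ in \eqref{eq:high-int-stationary} depends only on $\delta$, on $\beta=\beta(a,L,N,p_i^\pm)$, and on $\|u\|_{2,\Omega}$, as claimed.
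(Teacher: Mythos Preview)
Your proposal is correct and follows essentially the same route as the paper: the lemma is stated there as a direct synthesis of the estimates for $\mathcal{I}_1$ and $\mathcal{I}_2$ (Lemmas \ref{le:1-st-integration} and \ref{le:integration-I-1}), and you have accurately spelled out the absorption and covering steps that ``gathering the estimates'' entails. One minor imprecision: the Young-inequality bound on $\mathcal{I}_2$ in \eqref{eq:start} produces only the diagonal term $\delta\int(\epsilon^2+|D_iu|^2)^{(p_i-2)/2}(D^2_{ii}u)^2\,dx$, not the full sum over $j$, but this is of course dominated by the right-hand side of \eqref{eq:high-int-stationary}.
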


Let us consider the case $p_i\equiv p_i(x,t)$. Accept the notation
\[
S_{b_k,h,\tau}=K_{b_k}\times (\tau,\tau+h).
\]
As before, $K_{b_k}$ are cubes from the finite cover of the domain $K_{\vec a}$.
Divide the interval $(0,T)$ into $m$ sub-intervals $(t_j,t_j+h)$, and represent $Q_T=\Omega\times (0,T)$ as the union of the cylinders $S_{b_k,h,t_l}$. Let

\begin{equation}
\label{eq:S-choice}
\begin{split}
& p_{j}^+=\max_{S_{b_k,h,t_l}}p_j(z),\quad p_{j}^-=\min_{S_{b_k,h,t_l}}p_j(z).
\\
& p^\vee(z)=\max\{p_1(z),\ldots,p_N(z)\},\quad p^\wedge(z)=\min\{p_1(z),\ldots,p_N(z)\},\quad z\in Q_T.
\end{split}
\end{equation}

\begin{thm}
\label{th:higher-integr-par}
Assume that $|p_{it}|+|\nabla p_i|\leq L$ a.e. in $Q_T$ and for all $i=\overline{1, N}$, and

\[
\mu=\sup_{Q_T}\frac{p^\vee(z)}{p^\wedge(z)}<1+\frac{1}{N}.
\]
Then, for every smooth function $u$, every number

\[
r\in \left(0, \frac{2N(1-\mu)+4}{N+2}\right),
\]
and every $\delta\in (0,1)$

\begin{equation}
\label{eq:h-int-par}
\int_{Q_T}(\epsilon^2+|D_iu|^2)^{\frac{p_i(z)+r-2}{2}}|D_iu|^2\,dz\leq \delta \sum_{j=1}^{N} \int_{Q_T}(\epsilon^2+|D_iu|^2)^{\frac{p_i(z)-2}{2}}(D^2_{ij}u)^2\,dz+C
\end{equation}
with a constant $C$ depending on $\delta$, $L$, and $\operatorname{ess}\sup_{(0,T)}\|u\|_{2,\Omega}$.
\end{thm}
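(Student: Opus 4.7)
The strategy is to reduce \eqref{eq:h-int-par} to its stationary counterpart \eqref{eq:high-int-stationary} by partitioning $Q_T$ into small space-time cells $S_{b_k,h,t_l}=K_{b_k}\times(t_l,t_l+h)$. The Lipschitz bound $|p_{it}|+|\nabla p_i|\leq L$ controls the full oscillation of each $p_i$ on such a cell by $L(\beta\sqrt{N}+h)$ whenever $b_k\leq\beta$, so the constant $\gamma$ of \eqref{eq:gamma} -- now generated by both spatial and temporal oscillation -- can be made arbitrarily small by shrinking $\beta$ and $h$. The strict hypothesis $\mu<1+1/N$ provides the positive margin $1+\tfrac{1}{N}-\mu$ needed to absorb $\gamma$.

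First I fix $r\in(\tfrac{2}{N+2},r^*)$ with $r^*=\tfrac{2N(1-\mu)+4}{N+2}$, and choose $\beta,h$ so small that $\mu+\gamma<1+\tfrac{1}{N}$ and $r<2-(\mu+\gamma)\tfrac{2N}{N+2}$. I cover $Q_T$ by finitely many cells $S_{b_k,h,t_l}$ with these parameters; the number of cells depends only on $a_i$, $L$, and the $L^\infty$-bounds of $p_i$. On each cell and for a.e.\ $t\in(t_l,t_l+h)$, I apply Lemma \ref{le:high-int-stat} to the slice $u(\cdot,t)$ viewed as a function of $x$ alone in $K_{b_k}$, obtaining
\begin{equation}
\notag
\int_{K_{b_k}}(\epsilon^2+|D_iu|^2)^{\frac{p_i(z)+r-2}{2}}|D_iu|^2\,dx \leq \delta\sum_{j=1}^N\int_{K_{b_k}}(\epsilon^2+|D_iu|^2)^{\frac{p_i(z)-2}{2}}(D^2_{ij}u)^2\,dx+C,
\end{equation}
with $C$ depending on $\delta$, $L$, and $\|u(\cdot,t)\|_{2,K_{b_k}}$, the latter being uniformly bounded in $t$ by the energy estimate of Lemma \ref{le:a-priori-1}. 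Integrating in $t\in(t_l,t_l+h)$ and summing over the finite cover yields \eqref{eq:h-int-par} for this range of $r$.

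To extend to the remaining range $r\in(0,\tfrac{2}{N+2}]$, I would use the elementary pointwise bound
\begin{equation}
\notag
s^{\frac{p_i+r-2}{2}}|D_iu|^2\leq 1+s^{\frac{p_i+r'-2}{2}}|D_iu|^2,\qquad s=\epsilon^2+|D_iu|^2,
\end{equation}
valid for any $0<r\leq r'$ with $p_i+r>0$, which is verified by splitting the cases $s\geq 1$ (where $s^{(r-r')/2}\leq 1$) and $s<1$ (where the left side is bounded by $s^{(p_i+r)/2}\leq 1$). Picking any $r'\in(\tfrac{2}{N+2},r^*)$ already treated and integrating in $Q_T$ produces \eqref{eq:h-int-par} for all $r\in(0,r^*)$, at the cost of adding $|Q_T|$ to $C$.

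The main obstacle is ensuring uniformity of the constants in $\epsilon$ and in the Galerkin index $m$: the cover depends only on fixed data; the stationary constant is controlled via the energy bound of Lemma \ref{le:a-priori-1}; and the strict inequality $\mu<1+1/N$ is precisely the margin required to accommodate the space-time discretization of the exponents. The delicate parabolic refinement of \eqref{eq:intermediate-j}-\eqref{eq:oscillation-q}, which must now incorporate the temporal jump $h$ into $\gamma$, is the technical heart of the argument, but its logic mirrors the stationary case word by word.
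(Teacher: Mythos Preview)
Your proposal is correct and follows essentially the same route as the paper: partition $Q_T$ into space--time cells $S_{b_k,h,t_l}$, shrink $\beta$ and $h$ until the combined spatial--temporal oscillation parameter $\gamma$ satisfies $\mu+\gamma<1+\tfrac{1}{N}$ (the paper writes $\gamma$ with the factor $L\sqrt{4N\beta^2+h^2}$ in place of your $L(\beta\sqrt{N}+h)$), apply the stationary Lemma~\ref{le:high-int-stat} slice by slice, integrate in $t$, and sum. For the range $r\in(0,\tfrac{2}{N+2}]$ the paper simply says ``follows by Young's inequality'', which is equivalent to your elementary pointwise comparison; one minor remark is that the constant $C$ in the theorem is stated to depend on $\operatorname{ess\,sup}_{(0,T)}\|u\|_{2,\Omega}$ directly, so your appeal to Lemma~\ref{le:a-priori-1} belongs to the application rather than to the proof of the theorem itself.
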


\begin{proof}
It is sufficient to prove \eqref{eq:h-int-par} for $r>\frac{2}{N+2}$, the case $r\in \left(\left.0,\frac{2}{N+2}\right.\right]$ follows then by Young's inequality.  The proof imitates the proof of Lemma \ref{le:high-int-stat}. The difference consists in the choice of parameters $b_k$ and $h$, which now should be chosen so small that condition \eqref{eq:oscillation-q} is fulfilled in every cylinder $S_{b_k,h,t_l}$ of the partition of $Q_T$. For every cylinder $S_{b_k,h,t_l}$ we take

\[
\gamma=L\sqrt{4N\beta^2+h^2}\dfrac{(N+2)^2}{4N^2}(\max_jp_j^++\min_jp_j^-+2)
\]
with $p_j^\pm$ defined in \eqref{eq:S-choice}, and choose $\beta$, $h$ as small as is needed to obtain $\mu+\gamma<1+\dfrac{1}{N}$ in every cylinder $S_{b_k,h,t_l}$. This leads to estimate \eqref{eq:high-int-stationary} in every cube $K_{b_k}$ for a fixed $t\in (t_l,t_l+h)$. The conclusion follows then upon integration of \eqref{eq:high-int-stationary} in $t$ over the intervals $(t_l,t_l+h)$ and summation of the results.
\end{proof}

\begin{remark}
\label{rem:improved-cylinder}
The assertions of Lemmas \ref{le:integration-I-1}, \ref{le:high-int-stat} and Theorem \ref{th:higher-integr-par} remain true if $p_i(z)\geq 2$ in $\overline{Q}_T$ and $\mu<1+\dfrac{2}{N}$ - see Remark \ref{rem:improved}.
\end{remark}

\section{Existence, uniqueness and regularity of solutions}
\label{sec:proofs}
\subsection{Proof of Theorem \ref{th:existence-reg}}
Theorem \ref{th:higher-integr-par} allows for the following refinement of the a priori estimates of Lemmas \ref{le:a-priori-1}, \ref{le:a-priori-2}, \ref{le:a-priori-3}.

\begin{lemma}
\label{le:refine}
Let $\Omega=K_{\vec a}$. Assume that $\vec p(z)$ satisfies the conditions of Theorem \ref{th:higher-integr-par}. Then, the finite-dimensional approximations $u\equiv u_\epsilon^{(m)}$ satisfy the following uniform estimate

\begin{equation}
\label{eq:refinement}
\begin{split}
\|u_t\|_{2,Q_T}^2 & +\sup_{(0,T)}\| u\|^2_{2,\Omega} +\sup_{(0,T)}\|\nabla u\|^2_{2,\Omega} +\sum_{i=1}^N\sup_{(0,T)}\int_{\Omega}(\epsilon^2+|D_iu|^2)^{\frac{p_i(z)}{2}} \,dx
\\
& +\sum_{i,j=1}^N\int_{Q_T}(\epsilon^2+|D_iu|^2)^{\frac{p_i(z)-2}{2}} \left(D^2_{ij}u\right)^2\,dz +\sum_{i=1}^{N}\int_{Q_T}|D_i u|^{p_i(z)+r}\,dz
\\
&
\leq C\left(1+\|f\|_{2,Q_T}^2 + \|\nabla f\|_{2,Q_T}^2
+ \sum_{i=1}^N\int_{\Omega}|D_iu_0|^{p_i(x,0)}\,dx + \|u_0\|_{W_0^{1,2}(\Omega)}^2\right)
\end{split}
\end{equation}
with any
\[
r\in \left(\frac{2}{N+2},\frac{2N(1-\mu)+4}{N+2}\right).
\]
The constant $C$ is independent of $m$ and $\epsilon$.
\end{lemma}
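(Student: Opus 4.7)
The strategy is to combine the three a priori estimates of Section \ref{sec:estimates} with the interpolation-type bound of Theorem \ref{th:higher-integr-par} and absorb the ``extra'' $L^{p_i+r}$ terms that appeared on the right-hand sides of Lemmas \ref{le:a-priori-2} and \ref{le:a-priori-3} into the second-order-derivative term on the left of \eqref{eq:3rd-est}. Concretely, I would first fix some $\rho\in\bigl(\tfrac{2}{N+2},\tfrac{2N(1-\mu)+4}{N+2}\bigr)$ (the interval is nonempty precisely because $\mu<1+\tfrac{1}{N}$), so that both Lemma \ref{le:a-priori-2} and Theorem \ref{th:higher-integr-par} apply with this $\rho$ (playing the role of $r$ in the theorem).

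Next, I would add estimates \eqref{eq:1st-est}, \eqref{eq:2nd-est}, and \eqref{eq:3rd-est}. The left-hand side then contains $\|u_t\|_{2,Q_T}^2$, $\sup_{(0,T)}\|u\|_{2,\Omega}^2$, $\sup_{(0,T)}\|\nabla u\|_{2,\Omega}^2$, the supremum of $\int_\Omega(\epsilon^2+|D_iu|^2)^{p_i/2}\,dx$, and the ``weighted Hessian'' term $\sum_{i,j}\int_{Q_T}(\epsilon^2+|D_iu|^2)^{(p_i-2)/2}(D^2_{ij}u)^2\,dz$ — all of which also appear in the desired bound \eqref{eq:refinement}. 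The right-hand side contains the data terms that already appear in \eqref{eq:refinement}, plus the extra contributions $C\sum_i\int_{Q_T}|D_iu|^{p_i(z)+\rho}\,dz$ coming from Lemmas \ref{le:a-priori-2}--\ref{le:a-priori-3}.

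Now I would apply Theorem \ref{th:higher-integr-par} to the smooth approximate solution $u=u_\epsilon^{(m)}$ with exponent $\rho$ and with a small parameter $\delta>0$: since $|D_iu|^{p_i+\rho}\le (\epsilon^2+|D_iu|^2)^{(p_i+\rho-2)/2}|D_iu|^2$, the theorem gives
\[
\sum_i\int_{Q_T}|D_iu|^{p_i(z)+\rho}\,dz
\le \delta\sum_{i,j}\int_{Q_T}(\epsilon^2+|D_iu|^2)^{(p_i-2)/2}(D^2_{ij}u)^2\,dz+C(\delta),
\]
where $C(\delta)$ depends only on $L$, $\|u\|_{L^\infty(0,T;L^2(\Omega))}$, and the data, all of which have already been controlled by \eqref{eq:1st-est} in terms of $\|f\|_{2,Q_T}$ and $\|u_0\|_{2,\Omega}$. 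The main obstacle is the circular nature of this absorption: the coefficient in front of the Hessian term on the left-hand side of \eqref{eq:3rd-est} is $p_i-1-\delta'$ for some previously chosen small $\delta'$, so I must take the new $\delta$ strictly smaller than $\min_{Q_T}(p_i-1-\delta')$ (which is strictly positive thanks to $p_i^->1$) so that, after substitution, the Hessian term survives on the left with a positive coefficient.

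Finally, after this absorption and an application of Gronwall's inequality to handle the $\sup_{(0,T)}\|\nabla u\|_{2,\Omega}^2$ contribution (exactly as in the derivations of Lemmas \ref{le:a-priori-1} and \ref{le:a-priori-3}, multiplying by $e^{-Ct}$ before integrating), I obtain \eqref{eq:refinement} for every $\rho$ in the stated range, and the bound is uniform in $m$ and $\epsilon$ since none of the intermediate constants depend on either. The range for $r$ in the statement follows because $\rho$ may be chosen arbitrarily inside $\bigl(\tfrac{2}{N+2},\tfrac{2N(1-\mu)+4}{N+2}\bigr)$.
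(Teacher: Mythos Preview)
Your proposal is correct and follows precisely the route the paper indicates: the lemma is obtained by combining the a priori estimates of Lemmas \ref{le:a-priori-1}--\ref{le:a-priori-3} and using Theorem \ref{th:higher-integr-par} with a small $\delta$ to absorb the $\int_{Q_T}|D_iu|^{p_i+\rho}\,dz$ terms into the weighted Hessian term on the left of \eqref{eq:3rd-est}, the constant $C(\delta)$ being controlled via \eqref{eq:1st-est}. One minor remark: since \eqref{eq:1st-est}, \eqref{eq:2nd-est}, \eqref{eq:3rd-est} are already time-integrated, no further Gr\"onwall argument is needed at this stage---the absorption is purely algebraic once $\delta$ is chosen small relative to the constants in \eqref{eq:2nd-est} and \eqref{eq:3rd-est}.
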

Estimate \eqref{eq:refinement} allows one to find functions $u_\epsilon$, $\eta_j$, and a subsequence of $\{u_\epsilon^{(m)}\}$ such that

\begin{equation}
\label{eq:conv-eps}
\begin{split}
& \text{$u_{\epsilon}^{(m)}\to u_{\epsilon}$ $\ast$-weak in $L^\infty(0,T;L^2(\Omega))$},
\\
&
\text{$\nabla u_\epsilon^{(m)}\to \nabla u_\epsilon$ $\ast$-weak in $L^\infty(0,T;L^2(\Omega))^N$},
\\
& \text{$u_{\epsilon t}^{(m)}\rightharpoonup u_{\epsilon t}$ in $L^2(Q_T)$},
\\
& \text{$D_ju_\epsilon^{(m)}\rightharpoonup D_ju_\epsilon$ in $L^{p_j(\cdot)}(Q_T)$, }
\\
& \text{$\mathcal{F}_{\epsilon}^{(j)}(z,D_ju_\epsilon^{(m)})\rightharpoonup  \eta_j$ in $L^{p'_j(\cdot)}(Q_T)$}.
\end{split}
\end{equation}
Since $W^{1,\vec p(\cdot)}_0(\Omega)\subset W^{1,p^\wedge}_0(\Omega)\hookrightarrow L^2(\Omega)$, the functions $u_\epsilon^{(m)}$ are uniformly bounded in $L^\infty(0,T;W^{1,p^-}_0(\Omega))$. Since $u_{\epsilon t}^{(m)}$ are uniformly bounded in $L^2(Q_T)$, the sequence $\{u_\epsilon^{(m)}\}$ is relatively compact in $C^0([0,T];L^2(\Omega))$, see \cite[Sec.8, Cor.4]{Simon-1987}. Thus, $u_\epsilon^{(m)}\to u_\epsilon$ a.e. in $Q_T$.

By the method of construction, for every $k\leq m$

\begin{equation}
\label{eq:def-approx}
\int_{Q_T}\left(u_{\epsilon t}^{(m)}\phi+\sum_{j=1}^N \mathcal{F}_{\epsilon}^{(j)}(z,D_ju_{\epsilon}^{(m)})D_j\phi\right)\,dx = \int_{Q_T}f \phi\,dz,\qquad \forall \phi\in \mathcal{S}_k.
\end{equation}
Letting $m\to \infty$ we obtain the equality

\begin{equation}
\label{eq:ident-provisional}
\int_{Q_T}\left(u_{\epsilon t}\phi+ \sum_{j=1}^N\eta_j D_j\phi\right)\,dz= \int_{Q_T}f \phi\,dz
\end{equation}
with any $\phi\in \mathcal{S}_k$ with a fixed $k$. Since $\mathbb{W}(Q_T)=\bigcup_{k\geq 1}\mathcal{S}_k$, the same is true for every $\phi\in \mathbb{W}(Q_T)$. The functions $\mathcal{F}^{(j)}_\epsilon(z,\xi)$ are monotone: for all $\xi,\eta\in \mathbb{R}$

\begin{equation}
\label{eq:mon}
(\mathcal{F}_{\epsilon}^{(j)}(z,\xi)-\mathcal{F}_{\epsilon}^{(j)}(z,\eta))(\xi-\eta)\geq C\begin{cases}
|\xi-\eta|^p & \text{if $p\geq 2$},
\\
|\xi-\eta|^2(\epsilon^2+|\xi|^2+|\eta|^2)^{\frac{p-2}{2}} & \text{if $p\in (1,2)$}
\end{cases}
\end{equation}
with an absolute constant $C$. By using monotonicity and density of $\bigcup_{k\geq 1}\mathcal{S}_k$ in $\mathbb{W}(Q_T)$ we identify $\eta_j$ by the standard arguments (see, e.g., \cite[Sec.6]{A-S-2020}):
\[
\int_{Q_T}\eta_jD_j\phi\,dz=\int_{Q_T} \mathcal{F}_{\epsilon}^{(j)}(z,D_iu_\epsilon)D_j\phi\,dz\qquad \forall \phi\in \mathbb{W}(Q_T).
\]
It follows that the limit $u_\epsilon$ is a solution of problem \eqref{eq:main-reg}. Moreover, the uniform estimate \eqref{eq:refinement} entails the estimate

\begin{equation}
\label{eq:high-par-reg}
\sum_{i=1}^{N}\int_{Q_T}|D_iu_\epsilon|^{p_i(z)+r}\,dz\leq C
\end{equation}
with an independent of $\epsilon$ constant $C$.

Uniqueness of the constructed solution is an immediate byproduct of the monotonicity \eqref{eq:mon}: testing \eqref{eq:def-main-reg} for the solutions $u_{\epsilon, 1}$, $u_{\epsilon, 2}$ with $\phi=u_{\epsilon, 1}-u_{\epsilon, 2}$ we obtain the inequality $\|u_{\epsilon, 1}-u_{\epsilon, 2}\|_{2,\Omega}^2(t)\leq 0$ for a.e. $t\in (0,T)$.

\subsection{Strong convergence of the gradients}\label{a.e.-conv}

The strong convergence $u_\epsilon^{(m)}\to u_\epsilon$ in $L^2(Q_T)$, the weak convergence $D_ju_\epsilon^{(m)}\rightharpoonup D_ju_\epsilon$ in $L^{p_j(\cdot)}(Q_T)$ and the Mazur Lemma (see \cite[Ch.3,Cor.3.8]{Bresis-book}) yield the existence of a sequence $\{v^{(m)}\}$ of convex combinations of $\{u_\epsilon^{(k)}\}_{k=1}^{m}$ such that $v^{(m)}\to u_\epsilon$ in $\mathbb{W}(Q_T)$. Let us define $w_m\in \mathcal{S}_m$ as follows:

\[
\|w_m-u_\epsilon\|_{\mathbb{W}(Q_T)}=\min\left\{\|w-u_\epsilon\|_{\mathbb{W}(Q_T)}: w\in \mathcal{S}_m\right\}.
\]
The functions $u_{\epsilon}^{(m)}, w_m\in \mathcal{S}_m$ are admissible test-functions in \eqref{eq:def-main-reg} and \eqref{eq:def-approx}. Combining these equalities with the test-function $u_\epsilon^{(m)}$ we obtain

\[
\begin{split}
\sum_{i=1}^N\mathcal{K}_i & \equiv \sum_{i=1}^N \int_{Q_T}(\mathcal{F}_\epsilon^{(i)}(z,D_iu^{(m)}_\epsilon) -\mathcal{F}_\epsilon^{(i)}(z,D_iu_\epsilon))(D_i u_{\epsilon}^{(m)}-D_i u_{\epsilon})\,dz
\\
& =-\int_{Q_T}(u_{\epsilon t}^{(m)}-u_{\epsilon t})u_{\epsilon}^{(m)}\,dz
- \sum_{i=1}^N \int_{Q_T}(\mathcal{F}_\epsilon^{(i)}(z,D_iu^{(m)}_\epsilon) -\mathcal{F}_\epsilon^{(i)}(z,D_iu_\epsilon))D_i u_{\epsilon}\,dz.
\end{split}
\]
Choosing $w_m$ for the test-function we have the equality
\[
-\int_{Q_T}(u_{\epsilon t}^ {(m)}-u_{\epsilon t})w_m\,dz - \sum_{i=1}^{N}\int_{Q_T}(\mathcal{F}_\epsilon^{(i)}(z,D_iu^{(m)}_\epsilon) -\mathcal{F}_\epsilon^{(i)}(z,D_iu_\epsilon))D_iw_m\,dz=0.
\]
Subtracting the second equality from the first one we find that

\[
\begin{split}
\sum_{i=1}^N\mathcal{K}_i & \leq  \|u_{\epsilon t}^{(m)}-u_{\epsilon t}\|_{2,Q_T}\left(\|u_{\epsilon}-w_m\|_{2,Q_T} +\|u_{\epsilon}^{(m)}-u_{\epsilon}\|_{2,Q_T}\right)
\\
& +\sum_{i=1}^{N}\left(\|\mathcal{F}_\epsilon^{(i)}(z,D_iu^{(m)}_\epsilon)\|_{p_i'(\cdot),Q_T}+ \|\mathcal{F}_\epsilon^{(i)}(z,D_iu_\epsilon)\|_{p_i'(\cdot),Q_T}\right)\|D_iu_\epsilon-D_iw_m\|_{p_i(\cdot),Q_T}.
\end{split}
\]
By the choice of $\{w_m\}$ both terms on the right-hand side tend to zero as $m\to \infty$. Fix $i=\overline{1,N}$ and consider $\mathcal{K}_i$. Let us denote $Q_i^+=Q_T\cap \{p_i(z)\geq 2\}$ and $Q_i^-=Q_T\setminus Q_i^+$. By \eqref{eq:mon}

\[
\int_{Q_i^+}(\mathcal{F}_\epsilon^{(i)}(z,D_iu^{(m)}_\epsilon) -\mathcal{F}_\epsilon^{(i)}(z,D_iu_\epsilon))(D_i u_{\epsilon}^{(m)}-D_i u_{\epsilon})\,dz\geq \int_{Q_i^+}|D_i u_\epsilon^{(m)}-D_i u_\epsilon|^{p_i(z)}\,dz.
\]
On $Q_i^-$ we apply the generalized H\"older inequality \eqref{eq:Holder}:

\[
\begin{split}
\int_{Q_i^-} & |D_i(u_\epsilon^{(m)}-u_\epsilon)|^{p_i(z)}\,dz \equiv  \int_{Q_i^-} \left(\dfrac{(\epsilon^2+|D_iu_{\epsilon}^{(m)}|^2+|D_iu_{\epsilon}|^2)^{\frac{p_i-2}{2}} |D_i(u_\epsilon^{(m)}- u_\epsilon)|^2}{(\epsilon^2+|D_iu_{\epsilon}^{(m)}|^2 +|D_iu_{\epsilon}|^2)^{\frac{p_i-2}{2}}}\right)^{\frac{p_i}{2}}\,dz
\\
& \leq 2\left\|\left((\epsilon^2+|D_iu_{\epsilon}^{(m)}|^2+|D_iu_{\epsilon}|^2)^{\frac{p_i-2}{2}} |D_i(u_\epsilon^{(m)}- u_\epsilon)|^2\right)^{\frac{p_i}{2}}\right\|_{\frac{2}{p_i(\cdot)},Q_i^-}
\\
& \qquad \times \left\|(\epsilon^2+|D_iu_{\epsilon}^{(m)}|^2 +|D_iu_{\epsilon}|^2)^{p_i\frac{2-p_i}{4}}\right\|_{\frac{2}{2-p_i(\cdot)},Q_i^-}.
\end{split}
\]
Due to relation \eqref{eq:norm-mod} between the norm and the modular in the variable Lebesgue space, the second factor is bounded by a constant independent of $m$ and $\epsilon$. Because of \eqref{eq:mon} with $p\in (1,2)$ and \eqref{eq:norm-mod}, the first factor tends to zero as $\mathcal{K}_i\to 0$. It follows that  $\|D_i(u_\epsilon^{(m)}-u_\epsilon)\|_{p_i(\cdot),Q_T}\to 0$ as $m\to \infty$, which yields the pointwise convergence $D_iu_\epsilon^{(m)}\to D_iu_\epsilon$ a.e. in $Q_T$. By \eqref{eq:refinement} and the Vitali convergence theorem $D_iu_\epsilon^{(m)}\to D_iu_\epsilon$ in $L^{p_i(\cdot)+r}(Q_T)$ with every $r$ from the conditions of Lemma \ref{le:refine}.

\subsection{Proof of Theorem \ref{th:high-diff-reg}}
For all $i,j=\overline{1,N}$

\[
\begin{split}
D_j \left((\epsilon^2+|D_i u_\epsilon^{(m)}|^2)^{\frac{p_i-2}{4}}D_{i}u_\epsilon^{(m)} \right) & = (\epsilon^2+|D_i u_\epsilon^{(m)}|^2)^{\frac{p_i-2}{4}}D^{2}_{ij}u_\epsilon^{(m)}
\\
& + \frac{p_i-2}{2}(\epsilon^2+|D_i u_\epsilon^{(m)}|^2)^{\frac{p_i-2}{4}-1}(D_iu_\epsilon^{(m)})^2D^2_{ij}u_\epsilon^{(m)}
\\
& + (\epsilon^2+|D_i u_\epsilon^{(m)}|^2)^{\frac{p_i-2}{4}}D_{i}u_\epsilon^{(m)} \frac{1}{4}D_jp_i \ln (\epsilon^2+|D_i u_\epsilon^{(m)}|^2),
\end{split}
\]
whence, with the use of \eqref{eq:elem-1},

\[
\begin{split}
\left|D_j \left(\mathcal{F}_i^{(\epsilon)}(z,D_iu_\epsilon^{(m)})\right)\right|
\leq C (\epsilon^2+|D_i u_\epsilon^{(m)}|^2)^{\frac{p_i-2}{4}}|D_{ij}^2 u_\epsilon^{(m)}|
+ C'(\epsilon^2+|D_i u_\epsilon^{(m)}|^2)^{\frac{p_i}{4}+\frac{\rho}{2}} + C''.
\end{split}
\]
By virtue of \eqref{eq:refinement}

\begin{equation}
\label{eq:2nd-order-m-eps}
\left\|D_j \left(\mathcal{F}_i^{(\epsilon)}(z,D_iu_\epsilon^{(m)})\right)\right\|_{2,Q_T}\leq C.
\end{equation}
It follows that there exist $\zeta_{ij}\in L^2(Q_T)$ such that (up to a subsequence)

\[
D_j \left(\mathcal{F}_i^{(\epsilon)}(z,D_iu_\epsilon^{(m)} )\right) \rightharpoonup \zeta_{ij}\quad \text{in $L^2(Q_T)$ as $m\to \infty$}.
\]
Due to a.e. convergence $D_iu_\epsilon^{(m)}\to D_iu_\epsilon$, for every $\phi\in C_0^\infty(Q_T)$

\[
\begin{split}
(\zeta_{ij},\phi)_{2,Q_T} & =\lim_{m\to \infty}\left(D_j\left(\mathcal{F}_i^{(\epsilon)}(z,D_iu^{(m)}_\epsilon)\right),\phi\right)_{2,Q_T}
\\
& = -\lim_{m\to \infty}(\mathcal{F}_i^{(\epsilon)}(z,D_iu^{(m)}_\epsilon),D_j\phi)_{2,Q_T}
 =- (\mathcal{F}_i^{(\epsilon)}(z,D_iu_\epsilon),D_j\phi)_{2,Q_T}.
\end{split}
\]
Thus, $\zeta_{ij}=D_j\mathcal{F}_i^{(\epsilon)}(z,D_iu_\epsilon)$, and estimate \eqref{eq:2nd-order-m-eps} holds for $D_j\mathcal{F}_i^{(\epsilon)}(z,D_iu_\epsilon)$.

\begin{remark}\label{eq:second-der:fast-0}
Assume that for some $p_j(z) <2$ in $Q_T$ for some $j$. By Young's inequality,
\[
\begin{split}
\int_{Q_T}|D^2_{ij}u_\epsilon^{(m)}|^{p_j}\,dz & = \int_{Q_T}(\epsilon^2+(D_ju_\epsilon^{(m)})^2)^{\frac{p_j(2-p_j)}{4}} \left((\epsilon^2+(D_ju_\epsilon^{(m)})^2)^{\frac{p_j-2}{2}}|D_{ij}^2u_\epsilon^{(m)}|^2\right)^{\frac{p_j}{2}}\,dz\\
& \leq \int_{Q_T}  (\epsilon^2+(D_ju_\epsilon^{(m)})^2)^{\frac{p_j-2}{2}}|D_{ij}^2u_\epsilon^{(m)}|^2 \,dz + C \int_{Q_T}(\epsilon^2+(D_ju_\epsilon^{(m)})^2)^{\frac{p_j}{2}}\,dz.\\
\end{split}
\]
By virtue of \eqref{eq:refinement}, the integrals on the right-hand side of this inequality are uniformly bounded  with respect to $m$ and $\epsilon$. It follows that there exist $\zeta_{ij} \in L^{p_j(\cdot)}(Q_T)$ such that $D^2_{ij}u_\epsilon^{(m)} \rightharpoonup \zeta_{ij}$ in $ L^{p_j(\cdot)}(Q_T)$  (up to a subsequence). Since $D_j u_\epsilon^{(m)} \rightharpoonup D_j u_\epsilon$ in $L^{2}(Q_T)$, then for every $\phi
\in C_0^\infty(Q_T)$
\[
\begin{split}
(\zeta_{ij},\phi)_{2,Q_T} & =\lim_{m\to \infty}\left(D^2_{ij} u^{(m)}_\epsilon) ,\phi\right)_{2,Q_T}
\\
& = -\lim_{m\to \infty}(D_ju^{(m)}_\epsilon),D_i\phi)_{2,Q_T} = ( D_j u_\epsilon,D_i\phi)_{2,Q_T}.
\end{split}
\]
This further implies that $\zeta_{ij} = D^2_{ij} u_\epsilon$ and by lower semicontinuity of the modular,  $\|D^2_{ij} u_\epsilon\|_{p_j(\cdot), Q_T} \leq C.$
\end{remark}

\subsection{Proof of Theorems \ref{th:existence}, \ref{th:high-order-reg}}
The proofs imitate the proofs of the corresponding assertions for the regularized problem \eqref{eq:main-reg}. Let $\{u_\epsilon\}$ be the family of solutions of problem \eqref{eq:main-reg}. The uniform estimates \eqref{eq:est-epsilon} and \eqref{eq:high-reg} allow us to choose a sequence $\{u_{\epsilon_k}\}$ (we simply write $u_\epsilon$) which has the convergence properties \eqref{eq:conv-eps} with some $u\in \mathbb{W}(Q_T)$ and $\eta_j\in L^{p_j}(Q_T)$.
To pass to the limit as $\epsilon\to 0$ one may repeat the arguments of \cite[Sec.7]{A-S-2020}. The higher integrability of $D_iu$ follows from the independent of $\epsilon$ estimate \eqref{eq:high-par-reg}.

The strong convergence of the gradients follows from the strong monotonicity of the flux functions. Let $u_\epsilon$, $u_\delta$ be two solutions of the regularized problem \eqref{eq:main-reg}. Combining equalities \eqref{eq:def-main-reg} with the test-function $\phi=u_\epsilon-u_\delta\in \mathbb{W}(Q_T)$ we write the result in the form

\[
\begin{split}
\sum_{i=1}^N & \int_{Q_T}\left(\mathcal{F}_i^{(\epsilon)}(z,D_iu_\epsilon)-\mathcal{F}_i^{(\epsilon)}(z,D_iu_\delta)\right) \left(D_iu_\epsilon - D_i u_\delta\right)\,dz
= \int_{Q_T}(u_{\epsilon}-u_\delta)_t (u_{\epsilon}-u_\delta)\,dz
\\
&
- \sum_{i=1}^N\int_{Q_T}\left((\delta^2+|D_iu_\delta|^2)^{\frac{p_i-2}{2}} -(\epsilon^2+|D_iu_\delta|^2)^{\frac{p_i-2}{2}}\right)D_iu_\delta\left(D_i u_\epsilon- D_iu_\delta\right) \,dz.
\end{split}
\]
Due to \eqref{eq:conv-eps}, the first term on the right-hand side tends to zero as $\epsilon,\delta\to 0$ as the product of the weakly convergent sequence, $(u_\epsilon-u_\delta)_t\rightharpoonup 0$ in $L^2(Q_T)$, and the strongly convergent sequence $u_\epsilon-u_\delta\to 0$ in $L^2(Q_T)$. The second term tends to zero by the Vitali convergence theorem. On the one hand, the integrand tends to zero a.e. in $Q_T$ as $\epsilon-\delta\to 0$. On the other hand,  due to \eqref{eq:high-par-reg} it belongs to $L^{1+\sigma}(Q_T)$ with a sufficiently small $\sigma>0$. Indeed: by Young's inequality

\[
\begin{split}
& \left|\mathcal{F}_i^{(\delta)}(z,D_iu_\delta)-\mathcal{F}_i^{(\epsilon)}(z,D_iu_\delta)\right|^{1+\sigma} |D_i(u_\epsilon-u_\delta)|^{1+\sigma}
\\
& \qquad \leq C\left(1+|D_iu_\delta|^{(p_i-1)(1+\sigma)}\right)\left(|D_iu_\epsilon|^{1+\sigma}+|D_iu_\delta|^{1+\sigma}\right)
\\
& \qquad \leq C'\left(|D_iu_\epsilon|^{p_i(1+\sigma)}+|D_iu_\delta|^{p_i(1+\sigma)}\right) +C''\left(1+|D_iu_\delta|^{p_i(1+\sigma)}\right),
\end{split}
\]
therefore the integrand of the second term belongs to $L^{1+\sigma}(Q_T)$, provided $\sigma$ is so small that $\sigma \max_{\overline{Q}_T}p^\vee(z)<\frac{2}{N+2}$. Using \eqref{eq:mon} to estimate the left-hand side from below and then arguing as in the proof of the strong convergence of the gradients for the solutions of the regularized problems, we conclude that $D_{i}(u_\epsilon-u_\delta)\to 0$ in $L^{p_i(\cdot)}(Q_T)$ and, thus, a.e. in $Q_T$. The inclusions $|D_iu|^{\frac{p_i(z)-2}{2}}D_iu\in W^{1,2}(Q_T)$ follow now as in the proof of Theorem \ref{th:high-diff-reg}.

\begin{remark}\label{eq:second-der:fast-1}
The assertion of Remark \ref{eq:second-der:fast} follows by repeating the same arguments for $u_\epsilon$ in place of $u_\epsilon^{(m)}$ as in Remark \ref{eq:second-der:fast-0} and in the proof of Theorem \ref{th:high-diff-reg}.
\end{remark}

\section{Problems \eqref{eq:main}, \eqref{eq:main-reg} in a smooth domain. Proof of Theorem \ref{th:smooth-domain}}
\label{sec:smooth-domain}
We turn to the problems posed in a cylinder $Q_T=\Omega\times (0,T)$ with $\partial\Omega\in C^k$, $k\geq 1+N\left(\frac{1}{2}-\frac{1}{p^+}\right)$. The proof of Theorem \ref{th:smooth-domain} is an imitation of the proofs of the corresponding assertions in the rectangular domains. For this reason, we omit the details and present the arguments where the role of the domain geometry becomes crucial.

The solution of problem \eqref{eq:main-reg} is sought as the limit of the sequence $u^{(m)}_\epsilon\in \mathcal{S}_m$ in the basis composed of solutions of problem \eqref{eq:eigen-smooth}. For every $m\in \mathbb{N}$ the coefficients $c_i^{(m)}(t)$ are defined as the solutions of problems \eqref{eq:ODE-m}, the existence of a sequence $u^{(m)}(x,0)\to u_0$ in $W^{1,\vec q(x)}_0(\Omega)$ with $q_i(x)=\max\{2,p_i(x,0)\}$ follows from Lemma \ref{le:density-smooth-boundary}. The energy relation \eqref{eq:1-est} and the a priori estimates \eqref{eq:1st-est}, \eqref{eq:2nd-est} for the approximations $u\equiv u^{(m)}_\epsilon$ do not change if the rectangular domain $K_{\vec a}$ is substituted by a smooth domain $\partial\Omega$,

\begin{lemma}
\label{le:1-2-smooth}
Let $\partial\Omega\in C^{k}$ with $k\geq 1+N\left(\frac{1}{2}-\frac{1}{p^+}\right)$. If $p_i(z)$ satisfy the conditions of Theorem \ref{th:existence}, then the approximations $u\equiv u^{(m)}_\epsilon$ satisfy the uniform estimates \eqref{eq:1st-est}, \eqref{eq:2nd-est}.
\end{lemma}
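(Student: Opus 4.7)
The plan is to observe that the derivations of \eqref{eq:1st-est} and \eqref{eq:2nd-est} in Lemmas \ref{le:a-priori-1} and \ref{le:a-priori-2} never invoke the rectangular structure of $\Omega$: they rely only on (a) the Galerkin ODEs \eqref{eq:ODE-m} themselves, and (b) integration by parts in $\Omega$ against functions from the Galerkin basis that vanish on $\partial\Omega$. With the basis of Section \ref{sec:density} for smooth domains (the eigenfunctions $\{\phi_i\}$ of \eqref{eq:eigen-smooth}), whose density in $W^{1,\vec p(\cdot)}_0(\Omega)$ is Lemma \ref{le:density-smooth-boundary}, the arguments carry over essentially verbatim. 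The only adjustment is in the choice of the approximating initial data.

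For \eqref{eq:1st-est} I would multiply the $i$th equation of \eqref{eq:ODE-m} by $c_{m,i}(t)$, sum over $i=\overline{1,m}$, and use $u^{(m)}_\epsilon = \sum_i c_{m,i}(t)\phi_i$ together with $\phi_i|_{\partial\Omega}=0$ to recover identity \eqref{eq:1-est}; no boundary contributions appear since each $\phi_i$ vanishes on $\partial\Omega$. Applying Young's inequality on the right, multiplying by $e^{-t}$, and invoking Gronwall yields \eqref{eq:1st-est} with a constant independent of $m$ and $\epsilon$.

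For \eqref{eq:2nd-est} I would multiply by $c'_{m,i}(t)$, sum in $i$, and integrate in time. Exactly as in Lemma \ref{le:a-priori-2}, the pointwise identity expressing
\[
(\epsilon^2+|D_iu|^2)^{(p_i-2)/2}D_iu\,D_iu_t
\]
as a time derivative of the flux energy plus lower-order terms containing $p_{it}$ and $\ln(\epsilon^2+|D_iu|^2)$ is purely algebraic and independent of the geometry of $\Omega$. The logarithmic contributions are controlled by \eqref{eq:elem-1}, giving \eqref{eq:2nd-est} modulo the $\int_{Q_T}|D_iu|^{p_i(z)+\rho}\,dz$ term. The initial-data contribution $\sum_i\int_\Omega|D_iu_0|^{p_i(x,0)}\,dx$ is bounded because, by Lemma \ref{le:density-smooth-boundary}, the coefficients $c_{m,i}(0)$ can be chosen so that $u_0^{(m)}\to u_0$ in $W^{1,\vec q(\cdot)}_0(\Omega)$ with $q_i(x)=\max\{2,p_i(x,0)\}$, so the initial energy of the $m$th approximation is bounded uniformly in $m$.

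The expected conceptual obstacle is \textbf{not} inside this lemma at all; rather, it lies in the \emph{third} estimate \eqref{eq:3rd-est}, which is deliberately omitted here. That estimate requires testing with $\lambda_k c_{m,k}(t)$ and integrating by parts twice, which in the rectangular setting produces boundary integrals of mixed derivatives that collapse thanks to $\cos(\vec{\nu},x_j)|_{\Gamma_i}=0$ for $i\neq j$; in a smooth domain these boundary terms survive and can be controlled only under the extra hypothesis $p_i\equiv 2$ on $\partial\Omega\times[0,T]$, which is why Theorem \ref{th:smooth-domain} imposes it. For the present Lemma \ref{le:1-2-smooth}, however, no such issue arises and the proof is a direct transcription of Lemmas \ref{le:a-priori-1} and \ref{le:a-priori-2}.
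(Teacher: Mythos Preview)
Your proposal is correct and takes essentially the same approach as the paper. The paper's proof consists of nothing more than the sentence preceding the lemma, namely that the energy relation \eqref{eq:1-est} and the estimates \eqref{eq:1st-est}, \eqref{eq:2nd-est} do not change when $K_{\vec a}$ is replaced by a smooth domain; your write-up fleshes this out with the obvious details and correctly identifies that the geometric obstruction appears only in the third estimate \eqref{eq:3rd-est}, which is handled separately in Lemma \ref{le:by-parts-smooth}.
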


The difference between the cases of rectangular and smooth domains reveals in the derivation of the analog of equality \eqref{eq:3rd-est}. The boundary integrals, that were vanishing due to the geometry of the domain $K_{\vec a}$, are now present and lead to new restrictions on the admissible anisotropy of the equation. We will follow the proof of \cite[Th.3.1.1.1]{Grisvard-book}, which allows one to present the estimate in the form independent of the particular choice of the parametrization of $\partial\Omega$. Let us take a point $\xi\in \partial\Omega$ and choose $C^2$ curves $\{l_1,\ldots,l_{N-1}\}$ that are orthogonal at $\xi$. Denote by $\{\vec \tau_1,\ldots,\vec \tau_{N-1}\}$ the unit vectors tangent to $l_i$, and by $s_i$ the curve length along the curve $l_i$. By $\vec \nu$ we denote the normal vector to $\partial\Omega$. Given a smooth vector $\vec v$, we decompose it on $\partial\Omega$ into the sum of the tangent and normal components:

\[
\begin{split}
& \vec v=\vec v_\tau +v_\nu\vec \nu,
\quad  \vec v_\tau=\sum_{j=1}^{N-1} v_j\vec \tau_j,\quad v_j=(\vec v,\vec \tau_j).
\end{split}
\]
Let $\vec v$ and $\vec w$ be two given smooth vectors. Integrating by parts and literally following the proof of \cite[Th.3.1.1.1]{Grisvard-book} we arrive at the formula

\begin{equation}
\label{eq:G-1}
\int_\Omega \operatorname{div}\vec v \operatorname{div}\vec w\,dx = \int_{\partial\Omega}\left(v_\nu \operatorname{div}\vec w- ((\vec v\cdot \nabla)\vec w)\cdot \vec \nu\right)\,dS + \int_\Omega \sum_{i,j=1}^N D_j v_i D_i w_j\,dx,
\end{equation}
(cf. with \eqref{eq:3rd-est-pre}), where the boundary integral can be reduced to the form

\begin{equation}
\label{eq:G-2}
\begin{split}
\int_{\partial\Omega} \left(v_\nu \operatorname{div}\vec w- ((\vec v\cdot \nabla)\vec w)\cdot \vec \nu\right)\,dS & = -\int_{\partial\Omega} \left(\vec v_\tau \nabla _\tau (\vec w\cdot \vec\nu)+ \vec w_\tau \nabla _\tau (\vec v\cdot \vec\nu)\right)\,dS
\\
&
- \int_{\partial\Omega}\mathcal{B}(\vec v_\tau;\vec w_\tau)\,dS-\int_{\partial\Omega}v_{\nu}w_{\nu}\operatorname{tr}\mathcal{B} \,dS.
\end{split}
\end{equation}
Here $\mathcal{B}$ is the matrix of the second quadratic form of the surface $\partial\Omega$. In the local coordinates $\{y_i\}$ with the origin $\xi$ the surface $\partial\Omega$ is represented by the equation $y_N=\phi(y_1,\ldots,y_{N-1})$ where $(y_1,\ldots,y_{N-1})$ belongs to the tangent plane and $y_N$ points in the direction of the exterior normal $\vec\nu$. For every two vectors $\zeta$, $\eta$ tangent to $\partial\Omega$ at the point $\xi\in \partial\Omega$
\[
\mathcal{B}(\xi;\eta)=\sum_{i,j=1}^{N-1}D^2_{y_iy_j}\phi(\xi)\zeta_i\eta_j,\qquad \operatorname{tr}\mathcal{B}=\sum_{i=1}^{N-1}D_{y_iy_i}^2\phi(\xi).
\]

\begin{lemma}
\label{le:by-parts-smooth}
Let the conditions of Lemma \ref{le:1-2-smooth} be fulfilled and, in addition, $p_i(z)=2$ of $\partial\Omega\times [0,T]$. Then the functions $u\equiv u_\epsilon^{(m)}$ satisfy the uniform estimates

\begin{equation}
\label{eq:4-est}
\begin{split}
\sup_{(0,T)}\|\nabla u(t)\|^2_{2,\Omega} & + \sum_{i,j=1}^N\int_{Q_T}(\epsilon^2+|D_iu|^2)^{\frac{p_i(z)-2}{2}} \left(D^2_{ij}u\right)^2\,dz
\\
&
\leq C\left(1+\sum_{i=1}^N\int_{Q_T}|D_iu|^{p_i(z)+\rho}\,dz + \|\nabla u_0\|_{2,\Omega}^2+\| \nabla f\|_{2,Q_T}^{2}\right)
\end{split}
\end{equation}
with any $\rho\in (0,1)$ and a constant $C$ independent of $\epsilon$, $m$.
\end{lemma}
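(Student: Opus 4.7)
The strategy is to mimic the derivation of Lemma \ref{le:a-priori-3}, replacing the elementary integration by parts that was available in a parallelepiped with the Grisvard-type boundary formulas \eqref{eq:G-1}--\eqref{eq:G-2}. I will multiply the $k$-th Galerkin equation of \eqref{eq:ODE-m} by $\lambda_k c_{m,k}(t)$ and sum over $k=\overline{1,m}$; since the basis is composed of Dirichlet eigenfunctions of the Laplacian, this is equivalent to testing the approximate PDE with $-\Delta u$, where $u\equiv u_\epsilon^{(m)}$. One integration by parts in the time-derivative term (using $u|_{\partial\Omega}=0$) produces
$$\frac12\frac{d}{dt}\|\nabla u\|_{2,\Omega}^{2}+\sum_{j=1}^{N}\int_\Omega D_j\mathcal F_j^{(\epsilon)}(z,D_ju)\,\Delta u\,dx=-\int_\Omega f\,\Delta u\,dx,$$
so the whole task is to turn the flux sum into a coercive quadratic plus a controllable boundary contribution.

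Applying \eqref{eq:G-1} with $\vec v=(\mathcal F_1^{(\epsilon)},\dots,\mathcal F_N^{(\epsilon)})=:\vec F$ and $\vec w=\nabla u$ splits the flux integral as $\sum_{i,j}\int_\Omega D_j\mathcal F_i^{(\epsilon)}\,D^2_{ij}u\,dx$ plus the boundary term $\int_{\partial\Omega}(F_\nu\Delta u-((\vec F\cdot\nabla)\nabla u)\cdot\vec\nu)\,dS$. The bulk sum is processed exactly as in the derivation of \eqref{eq:3rd-est}: chain-rule expansion of $D_j\mathcal F_i^{(\epsilon)}$ yields the coercive form $\sum_{i,j}(p_i-1)(\epsilon^2+|D_iu|^2)^{(p_i-2)/2}(D^2_{ij}u)^2$ plus logarithmic error terms generated by $\nabla p_i$, the latter being absorbed via \eqref{eq:elem-1} and Young's inequality at the price of the tail $C\sum_i\int_\Omega|D_iu|^{p_i(z)+\rho}\,dx$ already present on the right of \eqref{eq:4-est}.

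The crucial new step is the reduction of the boundary term using \eqref{eq:G-2}. Two structural facts conspire here: first, since $u|_{\partial\Omega}=0$ the tangential gradient vanishes, so $(\nabla u)_\tau=0$ and $\nabla u=(\partial_\nu u)\vec\nu$ on $\partial\Omega$; second, the hypothesis $p_i(z)=2$ on $\partial\Omega\times[0,T]$ collapses $\mathcal F_j^{(\epsilon)}(z,D_ju)|_{\partial\Omega}=D_ju$, so that $\vec F|_{\partial\Omega}=\nabla u=(\partial_\nu u)\vec\nu$ is also purely normal and $\vec F_\tau=0$. Plugging $\vec v_\tau=\vec w_\tau=0$, $v_\nu=w_\nu=\partial_\nu u$ into \eqref{eq:G-2} annihilates its first three surface integrals, and the boundary contribution reduces to
$$\mathcal R_{\partial\Omega}=-\int_{\partial\Omega}(\partial_\nu u)^{2}\operatorname{tr}\mathcal B\,dS,$$
which must be moved to the right-hand side of the energy identity.

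The main obstacle will be estimating $|\mathcal R_{\partial\Omega}|$ uniformly in $m$ and $\epsilon$. Because $\partial\Omega\in C^k$ with $k\geq 2$ the mean curvature $\operatorname{tr}\mathcal B$ is bounded, so $|\mathcal R_{\partial\Omega}|\leq C\|\partial_\nu u\|_{2,\partial\Omega}^{2}$, and the classical trace/interpolation inequality for $H^2\cap H^1_0$ yields $\|\partial_\nu u\|_{2,\partial\Omega}^{2}\leq \delta\,\|D^2u\|_{2,\Omega}^{2}+C_\delta\|\nabla u\|_{2,\Omega}^{2}$. To absorb the $\|D^2u\|_{2,\Omega}^{2}$-piece into the anisotropic coercive term I will localize to a tubular collar of $\partial\Omega$: by Lipschitz continuity of $\vec p$ and the boundary condition $p_i=2$ on $\partial\Omega$, one may fix the collar so thin that $|p_i(z)-2|$ is as small as needed there, which keeps $(\epsilon^2+|D_ju|^2)^{(p_j-2)/2}$ between two strictly positive constants independent of $\epsilon\in(0,1)$ after splitting the integration region by the size of $|D_ju|$; in the complement of the collar the second-order derivatives do not enter the trace bound. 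The source term is then handled as in Remark \ref{rem:alternative-est} (one more integration by parts using $u|_{\partial\Omega}=0$, Young's inequality, and the tail control from Lemma \ref{le:1-2-smooth}), and a Gr\"onwall argument in $t$ delivers \eqref{eq:4-est}. The truly delicate bookkeeping is the coupling between the collar width, the Lipschitz constant of $\vec p$, the absorption parameter $\delta$, and the weight lower bound, and that is where the proof genuinely differs from the parallelepiped case.
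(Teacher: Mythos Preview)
Your reduction of the boundary contribution via \eqref{eq:G-1}--\eqref{eq:G-2} is exactly what the paper does: with $(\nabla u)_\tau=0$ and the hypothesis $p_i=2$ on $\partial\Omega$ (so that $\vec F|_{\partial\Omega}=\nabla u$ is purely normal), three of the four surface integrals vanish and only $-\int_{\partial\Omega}(\partial_\nu u)^{2}\operatorname{tr}\mathcal B\,dS$ survives. The divergence is in how you handle this remaining term.

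Your plan is to bound $\|\partial_\nu u\|_{2,\partial\Omega}^{2}$ by a trace--interpolation inequality producing $\delta\|D^{2}u\|_{2,\text{collar}}^{2}+C_\delta\|\nabla u\|_{2,\Omega}^{2}$, and then to absorb the \emph{unweighted} Hessian into the coercive term $\sum_{i,j}\int(\epsilon^{2}+|D_iu|^{2})^{(p_i-2)/2}|D^{2}_{ij}u|^{2}$ by exploiting $|p_i-2|<\eta$ in the collar. That absorption is not uniform in $\epsilon$. On the subset of the collar where $p_i(z)>2$ and $|D_iu|\le\epsilon$ the weight obeys $(\epsilon^{2}+|D_iu|^{2})^{(p_i-2)/2}\le(2\epsilon^{2})^{(p_i-2)/2}$, which tends to $0$ as $\epsilon\to0$ for every $\eta>0$; the reciprocal factor you would need to pass from $|D^{2}_{ij}u|^{2}$ to the weighted version therefore blows up like $\epsilon^{-(p_i-2)}$. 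No splitting by the size of $|D_ju|$ cures this, because the bad set is precisely where $|D_iu|$ is small, and $D^{2}_{ij}u$ is unconstrained there. The claim that the weight stays between two positive constants independent of $\epsilon$ is thus unjustified, and the resulting constant in \eqref{eq:4-est} would depend on $\epsilon$.

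The paper sidesteps this by never producing an unweighted $|D^{2}u|^{2}$. It rewrites the surviving boundary term as $K\sum_i\int_{\partial\Omega}(\epsilon^{2}+|D_iu|^{2})^{p_i(z)/2}\,dS$ (legitimate since $p_i=2$ on $\partial\Omega$) and applies the divergence theorem with a vector field $\vec\mu\in C^{\infty}(\overline\Omega)^{N}$ satisfying $\vec\mu\cdot\vec\nu\ge\delta_0>0$ on $\partial\Omega$ (\cite[Lemma~1.5.1.9]{Grisvard-book}). The resulting volume integral $\int_\Omega\operatorname{div}\bigl((\epsilon^{2}+|D_iu|^{2})^{p_i/2}\vec\mu\bigr)\,dx$ expands into $(\epsilon^{2}+|D_iu|^{2})^{p_i/2}\operatorname{div}\vec\mu$ plus gradient terms of size $(\epsilon^{2}+|D_iu|^{2})^{(p_i-1)/2}|D^{2}_{ij}u|$. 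Writing the latter as $(\epsilon^{2}+|D_iu|^{2})^{p_i/4}\cdot(\epsilon^{2}+|D_iu|^{2})^{(p_i-2)/4}|D^{2}_{ij}u|$ and using Young's inequality yields $\lambda(\epsilon^{2}+|D_iu|^{2})^{(p_i-2)/2}|D^{2}_{ij}u|^{2}+C_\lambda|D_iu|^{p_i+\rho}$: the second-order piece already carries the correct weight and is absorbed for small $\lambda$, uniformly in $\epsilon$.
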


\begin{proof}
We use the notation $u\equiv u_{\epsilon}^{(m)}$. Multiplying the $k$th equation in \eqref{eq:ODE-m} by $\lambda_k c_{m,k}(t)$, integrating by parts, and using \eqref{eq:G-1}, \eqref{eq:G-2} with $ v_i= \mathcal{F}^{(\epsilon)}_{i}(z, D_i u)$ and $\vec w=\nabla u$, we arrive at the equality

\begin{equation}
\label{eq:inter-smooth}
\begin{split}
\dfrac{1}{2}\dfrac{d}{dt}\|\nabla u\|_{2,\Omega}^2 & +\sum_{i,j=1}^N\int_{\Omega}D_j\left((\epsilon^2+(D_iu)^2)^{\frac{p_i(z)-2}{2}}\right)D^2_{ij}u\,dx - \int_{\Omega}\nabla f\cdot \nabla u\,dx
\\
& = \int_{\partial\Omega}\vec v_\tau \nabla _\tau (\vec w\cdot \vec\nu)\,dS+ \int_{\partial\Omega}\vec w_\tau \nabla _\tau (\vec v\cdot \vec\nu)\,dS
+ \int_{\partial\Omega}\mathcal{B}(\vec v_\tau;\vec w_\tau)\,dS+\int_{\partial\Omega}v_{\nu}w_{\nu}\operatorname{tr}\mathcal{B} \,dS.
\end{split}
\end{equation}
The second and the third terms on the right-hand side vanish because $\vec w_\tau=\nabla_\tau u=0$ on $\partial\Omega$. To eliminate the first term we claim that $\vec v_\tau=0$, that is, at every point of $\partial\Omega$ the vector with the components $\mathcal{F}^{(\epsilon)}_{i}(z, D_i u)$ (the flux) either equals zero, or points in the direction of the normal $\vec \nu$ to $\partial\Omega$. Since $\vec \nu=\dfrac{\nabla u}{|\nabla u|}$ at the points where $|\nabla u|\not=0$, this is true if $p_i(z)=2$ on $\partial\Omega$ for all $i=\overline{1,N}$: $\mathcal{F}^{(\epsilon)}_{i}(z, D_i u)=D_iu$. The last term is then bounded by

\[
 K\int_{\partial\Omega}|\nabla u|^2\,dS\leq K\sum_{i=1}^N\int_{\partial\Omega}(\epsilon^2+|D_iu|^2)^{\frac{p_i(z)}{2}}\,dS
\]
with a constant $K$ depending on the main curvature of $\partial\Omega$. By \cite[Lemma 1.5.1.9]{Grisvard-book} there exists a function $\vec \mu\in C^{\infty}(\overline{\Omega})^N$ such that $\vec \mu\cdot\vec \nu \geq \delta>0$ on $\partial\Omega$ for some constant $\delta$ depending on $\partial\Omega$. Then

\[
\begin{split}
\delta\int_{\partial\Omega}(\epsilon^2+|D_iu|^2)^{\frac{p_i(z)}{2}}\,dS &  \leq \int_{\Omega}\operatorname{div}\left((\epsilon^2+|D_iu|^2)^{\frac{p_i(z)}{2}}\vec \mu\right)\,dx
 \\
 &
 = \int_{\Omega} \vec \mu \cdot\nabla \left((\epsilon^2+|D_iu|^2)^{\frac{p_i(z)}{2}}\right)\,dx + \int_{\Omega} (\epsilon^2+|D_iu|^2)^{\frac{p_i(z)}{2}} \operatorname{div}\vec \mu\,dx.
 \end{split}
 \]
 The second term on the right-hand side is bounded by

 \[
 C\max_{\overline\Omega}|\operatorname{div}\vec \mu| \left(1+\int_{\Omega}|D_iu|^{p_i(z)}\,dx\right)\leq C'+C'' \int_{\Omega}|D_iu|^{p_i(z)}\,dx,
 \]
By applying \eqref{eq:elem-1} and Young's inequality, we estimate the first term as follows:

\[
\begin{split}
C\sum_{j=1}^{N} & \int_{\Omega} p_i(z)(\epsilon^2+|D_iu|^2)^{\frac{p_i(z)-1}{2}}|D_{ij}^{2}u|\,dx + C\int_{\Omega}(\epsilon^2+|D_iu|^2)^{\frac{p_i(z)}{2}}|\ln (\epsilon^2+|D_iu|^2)||\nabla p_i|\,dx
\\
& \leq C'\sum_{j=1}^N \int_\Omega (\epsilon^2+|D_iu|^2)^{\frac{p_i(z)}{4}}\left((\epsilon^2+|D_iu|)^{\frac{p_i(z)-2}{2}} (D^2_{ij}u)^2\right)^\frac{1}{2}\,dx + C'\int_{\Omega}(\epsilon^2+|D_iu|^2)^{\frac{p_i(z)+\rho}{2}} \,dx
\\
& \leq \lambda \sum_{j=1}^N \int_\Omega (\epsilon^2+|D_iu|)^{\frac{p_i(z)-2}{2}} (D^2_{ij}u)^2\,dx + C''\int_{\Omega}|D_iu|^{p_i(z)+\rho} \,dx +C''
\end{split}
\]
with arbitrary constants $\lambda,\rho\in (0,1)$. Transforming the second term on the left-hand side of \eqref{eq:inter-smooth} as $\mathcal{I}_i$ in  \eqref{eq:3rd-est-pre}, and then plugging into \eqref{eq:inter-smooth} the last two inequalities with sufficiently small $\lambda$ and $\rho$ , we arrive at the differential inequality

\begin{equation}
\label{eq:improved-1}
\begin{split}
\dfrac{1}{2}\dfrac{d}{dt}\|\nabla u\|_{2,\Omega}^2 & +\sum_{i,j=1}^N\int_{\Omega}(\epsilon^2+(D_iu)^2)^{\frac{p_i(z)-2}{2}}(D^2_{ij}u)^2\,dx
\\
& \leq C+ C'\|\nabla u\|_{2,\Omega}^2 + C''\sum_{i=1}^N\int_{\Omega}|D_iu|^{p_i+\rho} \,dx + \int_{\Omega}|\nabla f|^2\,dx.
\end{split}
\end{equation}
Inequality \eqref{eq:4-est} follows after integration in $t$.
\end{proof}
The proof of higher integrability of the gradient mimics the proof given in the case of a rectangular domain. The geometry of the domain $\Omega$ is important in the proof of Proposition \ref{pro:choice-1} where the anisotropic interpolation inequality \eqref{eq:global-conv} is employed. To apply this inequality, we take the smallest rectangular domain $K_{\vec c}$ that contains $\Omega$, and consider the zero continuation of $u$ from $\Omega$ to $K_{\vec c}$ with $p_i(z)=2$ in $K_{\vec c}\setminus \Omega$.

\begin{lemma}
\label{le:high-integr-par-smooth}
Assume that $\partial \Omega\in C^k$, $k\geq 1+N\left(\frac{1}{2}-\frac{1}{p^+}\right)$,  and the exponents $p_i(z)$ satisfy the conditions of Theorem \ref{th:smooth-domain}. Denote by $L$ the maximal of the Lipschitz constants of $p_i(z)$ in $Q_T$. %Let $|p_{it}|+|\nabla p_i|\leq L$ a.e. in $Q_T$.
If

\[
\mu=\sup_{Q_T}\frac{p^\vee(z)}{p^\wedge(z)}<1+\frac{1}{N},\quad \text{or}\quad \text{ $p_i(z)\geq 2$ in $\overline{Q}_T$ and $\mu<1+\dfrac{2}{N}$},
\]
then for every smooth function $u$, every number

\[
r\in \left(0, \frac{2N(1-\mu)+4}{N+2}\right),
\]
and every $\delta\in (0,1)$

\[
\int_{Q_T}(\epsilon^2+|D_iu|^2)^{\frac{p_i(z)+r-2}{2}}|D_iu|^2\,dz\leq \delta \sum_{j=1}^{N} \int_{Q_T}(\epsilon^2+|D_iu|^2)^{\frac{p_i(z)-2}{2}}(D^2_{ij}u)^2\,dz+C
\]
with a constant $C$ depending on $\delta$, $L$, and $\operatorname{ess}\sup_{(0,T)}\|u\|_{2,\Omega}$.
\end{lemma}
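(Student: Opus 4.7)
The plan is to mimic the proof of Theorem \ref{th:higher-integr-par} step by step, with a single essential modification at the place where the anisotropic interpolation inequality \eqref{eq:global-conv} is invoked. That inequality is only available on rectangular domains, whereas the cubes $K_{b_k}$ of a finite cover of a general smooth $\Omega$ need not be contained in $\Omega$. I resolve this exactly as the paragraph preceding the statement already suggests: I take the smallest parallelepiped $K_{\vec c}$ containing $\Omega$, extend the smooth test function $u$ by zero to $K_{\vec c}$ (the extension belongs to $W^{1,2}(K_{\vec c})$ because the Dirichlet condition forces $u=0$ on $\partial\Omega$), and extend each exponent $p_i(\cdot,t)$ from $\overline\Omega$ to $\overline{K}_{\vec c}$ as a Lipschitz function with the same Lipschitz constant $L$, so that $p_i \equiv 2$ on $\overline{K}_{\vec c}\setminus \Omega$. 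The boundary hypothesis $p_i = 2$ on $\partial\Omega\times[0,T]$ is precisely what makes such an extension Lipschitz and preserves the oscillation parameter $\mu$.

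With the extensions in hand, I reproduce the integration-by-parts identity \eqref{eq:start} on $\Omega$; its boundary integral vanishes using only $u=0$ on $\partial\Omega$, independently of the shape of the boundary. Splitting into $\mathcal{I}_1+\mathcal{I}_2$ proceeds as in Section \ref{sec:higher-integr}. Since $D_iu\equiv 0$ outside $\Omega$, the $L^{p_i+r}$ norms of $D_iu$ on $K_{\vec c}$ and on $\Omega$ coincide, so every local estimate produced on a cube $K_{b_k}$ of a cover of $K_{\vec c}$ descends to an estimate on $K_{b_k}\cap\Omega$. I then apply Lemma \ref{le:1-st-integration} to bound the integrals of $|u|^{s_j}$ coming from $\mathcal{I}_1$ and Lemma \ref{le:integration-I-1} to bound those coming from $\mathcal{I}_2$. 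The Lipschitz extension of $p_i$ guarantees that the selection rule \eqref{eq:gamma} for the admissible edge $\beta$ and the oscillation condition \eqref{eq:oscillation-q} hold on $K_{\vec c}$ with the original Lipschitz constant $L$, so Propositions \ref{pro:choice-1} and \ref{pro:cond-a-b} go through verbatim.

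The parabolic step is a verbatim copy of the corresponding step in Theorem \ref{th:higher-integr-par}: partition $Q_T$ into cylinders $S_{b_k,h,t_l}=K_{b_k}\times(t_l,t_l+h)$, choose $\beta$ and $h$ small enough that $\mu+\gamma<1+\tfrac{1}{N}$ on each cylinder (or $\mu+\gamma<1+\tfrac{2}{N}$ under the slow-diffusion alternative $p_i\geq 2$, by Remark \ref{rem:improved-cylinder}), obtain the stationary inequality \eqref{eq:high-int-stationary} for a.e.\ $t\in(t_l,t_l+h)$, integrate in $t$, and sum over $l$. The remaining case $r\in\bigl(0,\tfrac{2}{N+2}\bigr]$ is deduced from $r>\tfrac{2}{N+2}$ by Young's inequality.

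The main obstacle is the simultaneous extension of $u$ by zero together with a Lipschitz extension of the $p_i$ beyond $\overline\Omega$ with the same Lipschitz constant. For convex $\Omega$ the nearest-point projection onto $\overline\Omega$ is $1$-Lipschitz and provides the extension by composition; for a general $C^k$ domain I would invoke the McShane--Whitney Lipschitz extension theorem and then modify the result so that it coincides with the constant $2$ outside a tubular neighborhood of $\partial\Omega$. Once this extension step is in place, no new boundary contribution appears, since the interpolation inequalities are applied in the interior of $K_{\vec c}$, and the entire proof of Theorem \ref{th:higher-integr-par} transfers without further change.
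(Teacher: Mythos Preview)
Your proposal is correct and follows exactly the approach the paper takes: mimic the proof of Theorem \ref{th:higher-integr-par}, extending $u$ by zero to the smallest parallelepiped $K_{\vec c}\supset\Omega$ and setting $p_i=2$ on $K_{\vec c}\setminus\Omega$, then running the cube-covering and interpolation arguments verbatim. One simplification: you do not need McShane--Whitney or a tubular modification for the exponents. The paper simply defines $p_i\equiv 2$ on $K_{\vec c}\setminus\Omega$; since $p_i=2$ on $\partial\Omega$, for any $x\in\Omega$, $y\notin\overline\Omega$ the segment $[x,y]$ meets $\partial\Omega$ at some $z$ with $|x-z|\le|x-y|$, giving $|p_i(x)-2|=|p_i(x)-p_i(z)|\le L|x-z|\le L|x-y|$, so the trivial extension is already $L$-Lipschitz and preserves $\mu$.
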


Since the rest of the proof of Theorems \ref{th:existence-reg}, \ref{th:high-diff-reg}, \ref{th:existence}, \ref{th:high-order-reg} is independent of the geometry of $\Omega$, the assertions of Theorem \ref{th:smooth-domain} with $\partial \Omega \in C^k$ follows by a literal repetition of the proofs of the corresponding assertions in the case of a rectangular domain.

\bibliographystyle{siam} %{elsarticle-num} %{siam}
\bibliography{main}
\end{document}